\documentclass[a4paper,11pt]{article}

\usepackage[utf8]{inputenc}
\usepackage[T1]{fontenc}
\usepackage[english]{babel}
\usepackage{amsmath, amsfonts, amsthm,amssymb,here,dsfont, hyperref}
\usepackage{mathtools}
\usepackage{graphicx,color,subfigure,multirow, here}
\usepackage{natbib}
\usepackage{enumitem}
\usepackage{comment}

\newtheorem{theo}{Theorem}[section]

\newtheorem{prop}[theo]{Proposition}

\newtheorem{lemme}[theo]{Lemma}
\newtheorem{remark}[theo]{Remark}

\newtheorem{assumption}[theo]{Assumption}

\newcommand{\w}{\widehat}

\usepackage[textwidth=1.8cm, textsize=tiny]{todonotes}

\title{Minimax convergence rates of a binary plug-in type classification procedure for time-homogeneous SDE paths under low-noise conditions}

\author{Eddy-Michel Ella-Mintsa}

\begin{document}

\maketitle
\begin{center}
 Institut de Recherche Technologique, CENAREST, BP 9154 Libreville, Gabon
\end{center}

\date{}

\begin{abstract}
The study of minimax convergence rates for classification procedures adapted to SDE paths remains relatively scarce in the literature. Existing results are limited to the white noise model and, more recently, to time-homogeneous SDEs with a focus on derivation of standard minimax rates. In this paper, we consider a diffusion model characterized by a time-homogeneous SDE with a space-dependent drift coefficient depending on the class and a diffusion coefficient that is common to the two classes. We establish, under low-noise conditions on the regression function, a faster convergence rate of order $\log^4(N)N^{-2\beta/(2\beta+1)}$ over a H\"older space of smoothness parameter $\beta \geq 1$. This result will require the establishment of an exponential inequality, which is essential to obtain the expected rate. We then prove that it is not possible to achieve a convergence rate that is faster than $N^{-2\beta/(2\beta+1)}$.
\end{abstract}

\vspace*{0.25cm} 

\noindent {\bf Keywords}: Diffusion process; Nonparametric estimation; Exponential inequality; Plug-in classifier; Low-noise conditions; Minimax rates\\

\noindent MSC: 62G05; 62M05; 62H30\\

\section{Introduction}
\label{sec:intro}

We study some key theoretical properties of supervised classification adapted to time-homogeneous diffusion processes. The classification procedure is binary and plug-in type and is built from $N$ independent copies of a random pair $(X, Y)$ that belongs to a probability space $\left(\Omega, \mathcal{F}, \mathbb{P}_{X,Y}\right)$, where the characteristic $X = (X_t)_{t \in [0,T]}$ is a diffusion process whose drift coefficient $b_Y^*$ is unknown and depends on the label $Y \in \mathcal{Y} = \{0,1\}$, and whose diffusion coefficient is known and common to the two classes. The label $Y$ follows an unknown discrete law ${\bf p^*} = \left(p_0^*, p_1^*\right) \in (0,1)^2$, and the characteristic $X$ belongs to the probability space $\left(\mathcal{X}, \mathfrak{F}, \mathbb{P}_X\right)$ where $\mathcal{X} = \mathcal{C}([0,T], \mathbb{R})$ is the space of continuous functions on the compact interval $[0,T]$ with $T>0$, and $\mathfrak{F}$ its $\sigma$-field. Any measurable function $g$ that maps the set of trajectories $\mathcal{X}$ into the set of labels $\mathcal{Y}$ is called a classifier or a classification rule. Its performance is measured by $\mathbb{P}_{X,Y}(g(X) \neq Y)$ called the risk of misclassification or classification error. A Bayes classifier $g^*$ is an optimal classification rule in the sense of minimizing the classification error. As a result, the performance of any classifier $g: \mathcal{X} \rightarrow \mathcal{Y}$ is assessed via its excess risk $\mathbb{P}_{X,Y}(g(X) \neq Y) - \mathbb{P}_{X,Y}(g^{*}(X) \neq Y)$ with respect to the Bayes classifier. Since the joint distribution of the random couple $(X,Y)$ is unknown, we suppose to have a learning sample $\left\{(\bar{X}^j, Y_j), ~~ j = 1, \ldots, N\right\}$ constituted of $N$ independent copies of $(\bar{X},Y)$, where $\bar{X} = (X_{t_k^n})_{0 \leq k \leq n}$ is a discrete observation of $X$ with $\Delta_n = T/n$ and $n \rightarrow \infty$ (high frequency observations). We then build an empirical classification procedure of plug-in type $\w{g}$ whose excess risk tends to zero as the size $N$ of the learning sample tends to infinity. This paper addresses the establishment of a faster convergence rate of the empirical classifier $\w{g}$, that is, a rate that is faster than $N^{-1/2}$ as $N$ tends to infinity.    

\subsection{Generality and related works}

Functional data analysis is attracting growing interest, driven by the massive and increasing availability of this type of data in various fields of application such as finance (see, \textit{e.g.}, \cite{lamberton2011introduction}, \cite{el1997backward}), biology (see \cite{crow2017introduction}) or ecology (see, \textit{e.g.}, \cite{nagai1983asymptotic}). The work of Ramsay and Silverman is one of the pioneering references in functional data analysis and its applications in various fields (see \cite{ramsay2005fitting}).\\
One of the analyzes of great interest is the discriminant analysis whose aim is to build, from labeled data, a classification procedure to predict a predefined class for any new observation or feature. There is a large literature on this statistical technique that leads to the creation of multiple algorithms that are at the core of machine learning. This statistical method was first emphasized by \cite{fisher1936use}, \cite{rao1948utilization} or \cite{anderson1958introduction}. General approaches are studied in \cite{mclachlan2005discriminant} and \cite{devroye2013probabilistic}, in which particular attention is paid to the Bayesian rule that leads to more efficient classifiers. Increasingly sophisticated algorithms are developed according to the type of data being considered. Some of the algorithms are based on the plug-in principle (see, \textit{e.g.} \cite{devroye1980distribution}, \cite{audibert2007fast}), or the empirical minimization principle (see, \textit{e.g.}, \cite{mammen1999smooth}, \cite{bartlett2006convexity}). One can also find decision rules based on the k-Nearest Neighbors principle (see, \textit{e.g.} \cite{cover1967nearest}, \cite{gyorfi2006distribution},  \cite{devroye2013probabilistic}), or depth-based classification rules (see, \cite{tukey1975mathematics}, \textit{e.g.} \cite{cuevas2007robust}).\\ 
For the specific case of functional data analysis, significant contributions have been made in classification problems. One can cite, for example, classification procedures based on dimension reduction (see, \textit{e.g.} \cite{james2001functional}, \cite{hyndman2009forecasting},  \cite{delaigle2012achieving}), those based on nonparametric and kernel methods (see, \textit{e.g.} \cite{ferraty2003curves}, \cite{baillo2011classification}), depth-based classification rules (see, \textit{e.g.} \cite{cuevas2007robust}), or classification rules constructed using neural networks (see, \textit{e.g.} \cite{wang2023deep}, \cite{wang2024functional}). Not far from our framework, in the context of supervised classification of functional data modeled by diffusion processes, the first classification procedure to be constructed is based on minimization of the empirical classification error and proposed in \cite{cadre2013supervised}. This contribution is followed by the construction of plug-in type classifiers based on parametric or nonparametric estimators of the drift and diffusion coefficients (see \cite{denis2020classif}, \cite{gadat2020optimal}, \cite{denis2024nonparametric}, \cite{ella2026minimax}). In a nonparametric setting, plug-in type classifiers imply the construction of nonparametric estimators of drift and diffusion coefficients, which is another statistical challenge, especially when the diffusion process is unbounded, leading to nonparametric estimations on non compact intervals. Fortunately, there are multiple contributions on nonparametric estimation of coefficients of stochastic differential equations, particularly from independent and identically distributed diffusion processes (see \cite{comte2020nonparametric}, \cite{denis2021ridge}, \cite{marie2023nadaraya}, \cite{denis2024nonparametric}, \cite{ella2024nonparametric}, \cite{ella2025minimax}). However, for the statistical problem considered in this paper, projection estimators of the coefficients of diffusion processes do not appear to be appropriate for establishing our main results. In fact, a suitable nonparametric estimator of the drift or diffusion coefficient should be ideally defined as an empirical mean of independent random variables since it allows to establish exponential inequalities crucial for the purpose of this paper. Then, to our knowledge, only \cite{marie2023nadaraya} proposed  nonparametric drift estimators that satisfy the required properties that can lead to the establishment of exponential inequalities.\\
Focusing on the study of minimax convergence rates for classification rules, we have, for example, \cite{Yang99} which established that an empirical classification rule cannot reach a convergence rate faster than $N^{-1/2}$ under complexity assumptions on the regression function. However, \cite{bartlett2006convexity} and \cite{audibert2007fast} proved that faster convergence rates can be reached under a Margin Assumption (MA) or low-noise conditions on the regression function, that is, a condition in which the regression function $\Phi^*(X) = \mathbb{P}_{X,Y}(Y = 1|X)$ is unlikely to be in the neighborhood of $1/2$. To the best of our knowledge, only \cite{gadat2020optimal} established both standard and fast minimax-optimal convergence rates for binary classification based on trajectories generated by Gaussian processes. More recently, \cite{ella2026minimax} derived the standard minimax-optimal rates in the context of binary classification of trajectories generated by time-homogeneous stochastic differential equations (SDEs).

The present paper extends the analysis of \cite{gadat2020optimal} to mixtures of stochastic differential equations with space-dependent drift and diffusion coefficients. Moreover, under low-noise conditions, it significantly improves the optimal minimax rates established in \cite{ella2026minimax}. These extensions are far from straightforward and raise substantial new challenges, both in the theoretical analysis of nonparametric estimators for the coefficients of diffusion processes and in the study of the statistical properties of the resulting plug-in classifier.

\subsection{Main contributions}

We suppose to have at our disposal a learning sample $\left\{(\bar{X}^j, Y_j), ~ j = 1, \ldots, N\right\}$, where the couples $(\bar{X}^j, Y_j)$ are independent copies of the random pair $(\bar{X},Y)$ with $Y \in \mathcal{Y} = \{0,1\}$ and $\bar{X}$ a discrete observation of the diffusion process $X$ whose drift coefficient $b_Y^*$ is unknown and depends on the label $Y$ and whose diffusion coefficient is known and common to all classes. The objective is to establish a minimax convergence rate faster than $N^{-1/2}$ for the excess risk $\mathbb{P}_{X,Y}(\w{g}(X) \neq Y) - \mathbb{P}_{X,Y}(g^*(X) \neq Y)$ of the plug-in classifier $\w{g}$. The establishment of a faster convergence rate is possible under the low-noise condition on the regression function $\Phi^*: X \in (\mathcal{X}, \mathfrak{F}) \mapsto \mathbb{P}_{X,Y}(Y=1|X)$ (see \cite{audibert2007fast}). More precisely, the regression function $\Phi^*$ satisfies the following
\begin{equation*}
    \forall \varepsilon > 0, ~~\mathbb{P}_X\left(0 < \left|\Phi^*(X) - \dfrac{1}{2}\right| \leq \varepsilon\right) = \mathrm{O}\left(\varepsilon^{\alpha}\right),
\end{equation*}
where $\alpha > 0$, the case $\alpha = 0$ being not interesting for the purpose of this paper. The above result is established in \cite{denis2025empirical}, \textit{Proposition 4.3}, for $\alpha = 1$ assuming that the random variable $Z_T = \int_{0}^{T}(b_1^* - b_0^*)(X_s)dW_s$ admits a smooth transition density, essential to obtain the expected result. However, the existence of a smooth probability density for $Z_T$ is generally established under strong assumptions on the diffusion model. In this article, we prove that $Z_T$ admits a smooth density keeping assumptions on $b_i^*, ~ i \in \mathcal{Y}$ as weak as possible. Once the low-noise condition is established, the main results of this paper are as follows. 

\begin{enumerate}
    \item We consider Nadaraya-Watson estimators $\w{b}_{i, N, h_i, h_i^{\prime}}, ~ i \in \mathcal{Y}$ of drift coefficients $b_i^*, ~ i \in \mathcal{Y}$ proposed in \cite{marie2023nadaraya} from sub-samples of the sample $\{\bar{X}^1, \ldots, \bar{X}^N\}$, each coefficient $b_i^*$ being estimated from diffusion paths belonging to the class $i$. Recall that $N$ is the size of the learning sample and $h_i, h_i^{\prime} > 0$ are the bandwidths. We establish, under suitable assumptions on the diffusion model, the following exponential inequality.
    \begin{align*}
        \mathbb{P}_i^{\otimes N}\left(\left\|\widehat{b}_{i,N,h,h^{\prime}} - b_i^*\right\|_{\infty} \geq \delta\right) \leq &~ 2\exp\left(-\mathbf{C}N_i\delta^2h\right) + N_i\exp\left(-\dfrac{(T-t_0)\log^2(N)}{2\|K\|_{\infty}^2}\right)\\
        & + 4\exp\left(-\mathbf{C}N_i\delta^2h^{\prime}\right) + \dfrac{6\|b_i^*\|_{\infty}}{\delta}\exp\left(-\mathbf{C}^{\prime}N_ih\right),
    \end{align*}
    where $\delta \in (0,1)$, $\mathbf{C}, \mathbf{C}^{\prime} > 0$ are constants that depend on $T, b_0^*$ and $b_1^*$, $N_i := \sum_{j=1}^{N}\mathds{1}_{Y_j = i} \sim \mathrm{Binomial}(N, p_i^*)$ is the size of the sub-sample made up of diffusion paths of class $i$, $t_0 \in [0,T]$ is fixed, $K$ is a kernel, and $\mathbb{P}_i^{\otimes N}$ is the conditional joint probability distribution of the learning sample given $\{\mathds{1}_{Y_1=i}, \ldots, \mathds{1}_{Y_N = i}\}$. This exponential inequality is essential for establishing a faster convergence rate for the excess risk of the plug-in classifier. 
    \item From the low-noise condition, together with the above exponential inequality, we show that the excess risk $\mathbb{P}_{X,Y}(\w{g}(X) \neq Y) - \mathbb{P}_{X,Y}(g^*(X) \neq Y)$ of the plug-in classifier $\w{g}$ converges to zero with a rate of order $\log^4(N)N^{-2\beta/(2\beta+1)}$ over the H\"older class of smoothness parameter $\beta \geq 1$. The logarithmic factor is the result of two main complications. The first is the complexity of the Nadaraya-Watson estimator, which is a ratio of two estimators. The second is the fact that, multiple times, we deal with unbounded random variables. This leads us to consider random events in which these variables are bounded for the application of concentration inequalities such as Bernstein's inequality or the inequality established \cite{van1995exponential}, \textit{Lemma 2.1}. 
    \item We establish a lower bound on the average excess risk $\mathbb{E}_{\mathbb{P}^{\otimes N}}\left[\mathbb{P}_{X,Y}(\w{g}(X) \neq Y)\right] - \mathbb{P}_{X,Y}(g^*(X) \neq Y)$ of order $N^{-2\beta/(2\beta+1)}$, where $\mathbb{P}^{\otimes N}$ is the joint probability distribution of the learning sample and $\mathbb{E}_{\mathbb{P}^{\otimes N}}$ its corresponding expectation. Note that the study of the lower bound of the excess risk requires the use of the explicit formula of the transition density of the diffusion process $X$ provided in \cite{dacunha1986estimation}, and the equivalence relation between the image probability distribution of $X$ and the Wiener measure. In fact, the proof of the lower bound relies on Assouad's lemma adapted to the classification problem provided in \cite{Audibert2004ClassificationUP}. We then have to build a hypercube which includes a partition $\left\{\mathcal{X}_1, \ldots, \mathcal{X}_m\right\}$ of the infinite-dimensional space $\mathcal{X} = \mathcal{C}([0,T], \mathbb{R})$ with $m \in \mathbb{N}^*$ such that for all $i \in \{1, \ldots, m\}$ $\mathbb{P}_X(X \in \mathcal{X}_i) = w>0$, where $w$ is independent of $i$. To this end, the use of the exact formula for the transition density of $X$ is crucial for the construction of the hypercube. Moreover, we will need to ensure that for each $i \in \{1, \ldots, m\}$,  conditional on $\{X \in \mathcal{X}_i\}$, the diffusion process $X$ does not take values in a countable subset of $\mathbb{R}$, which is satisfied if the image measure of $X$ is equivalent to the Wiener measure.
\end{enumerate}

\subsection{Outline of the paper}

In Section~\ref{sec:statSetting}, we present the statistical setting of the paper, which includes the definition of the diffusion model, the notations adopted for this work, the assumptions about our diffusion model, the classification procedure, and low-noise conditions. Sections~\ref{sec:main-results} and \ref{sec:conclusion} are devoted respectively to the main results of the paper and the conclusion. We provide the proofs of the main results in Section~\ref{sec:proofs}. The proofs of intermediate results are provided in the appendix.

\section{Statistical setting}
\label{sec:statSetting}

We consider a classification model whose feature $X = (X_t)_{t \in [0,T]}$ is a short-time diffusion process defined in a filtered probability space $\left(\Omega, \mathcal{F}, \left(\mathcal{F}_t\right)_{t \in [0,T]}, \mathbb{P}_X\right)$, with $T > 0$ the time horizon and $\left(\mathcal{F}_t\right)_{t \in [0,T]}$ the natural filtration, and whose label $Y \in \mathcal{Y} = \{0,1\}$ is a binary random variable. The feature $X$ is solution of a mixture model characterized by the following stochastic differential equation
\begin{equation}\label{eq:classif-model}
    dX_t = b_Y^*(X_t)dt + dW_t, ~~ t \in [0, T], ~~ X_0 = x_0 \in \mathbb{R},
\end{equation}
where $W = (W_t)_{t \in [0, T]}$ is the standard Brownian motion independent of the label $Y$, the drift function 
 $b_Y^*$ is unknown and depends on the label $Y \in \mathcal{Y} = \{0,1\}$, and $b_0^* \neq b_1^*$. Denote by $\mathbf{p}^* = (p_0^*, p_1^*) \in (0,1)^2$ the law of the label $Y$ which is assumed to be unknown.\\
 A classifier is a measurable function $g$ that maps the set of features $\mathcal{X} = \mathcal{C}([0,T], \mathbb{R})$ into the set of labels $\mathcal{Y} = \{0,1\}$. More precisely, for any feature $X \in \mathcal{X}$, $g(X) \in \mathcal{Y}$ is the predicted label of $X$. Thus, the performance of $g$ is assessed through the classification error characterized by the function $R : \mathcal{G} \rightarrow [0,1]$ given for all $g \in \mathcal{G}$ by $R(g) := \mathbb{P}_{X,Y}(g(X) \neq Y)$, where $\mathcal{G}$ is a carefully chosen set of classifiers to be specified later. A Bayes classifier is a classification rule $g^*$ that minimizes the classification error $R$ on the set $\mathcal{G}$, that is:
 \begin{equation}\label{eq:bayes1}
     g^* \in \underset{g \in \mathcal{G}}{\arg\min} ~R(g).
 \end{equation}
 The function $g^*$ is given by $g^*(X) = \mathds{1}_{\Phi^*(X) \geq 1/2}$ where $\Phi^*$ is a regression function defined by $\Phi^*(X) = \mathbb{P}_{X,Y}(Y = 1 | X)$ (see \cite{devroye2013probabilistic} for more details). Since the law of the random pair $(X,Y)$ is unknown, this paper is devoted to the study of the minimax convergence rates of excess risk $R(\widehat{g}) - R(g^*)$ of an empirical classifier $\widehat{g}$ of the plug-in type built from a learning sample $\mathcal{Z}^N$. 

\subsection{Notations and definitions}
\label{subsec:not.def.}

We suppose to have a learning sample $\mathcal{Z}^N = \left\{(\bar{X}^j, Y_j), ~~ j = 1,\ldots, N\right\}$ constituted of $N$ independent copies of the random couple $(\bar{X},Y)$, where $\bar{X} = (X_{t_k})_{0 \leq k \leq n}$ is a discrete observation of the unique strong solution $X$ of Equation~\eqref{eq:classif-model} and its label $Y$. Recall that the distribution of the random pair $(X,Y)$ is denoted by $\mathbb{P}_{X,Y}$, and $\mathbb{E}_{X,Y}$ is the corresponding expectation. We denote by $\mathbb{P}_X$ and $\mathbb{E}_X$, respectively, the probability distribution and the expectation of $X$. The joint distribution of independent copies $(X^1, Y_1), \ldots, (X^N, Y_N)$ of the random pair $(X,Y)$ is indicated by $\mathbb{P}^{\otimes N}$, $\mathbb{E}_{\mathbb{P}^{\otimes N}}$ and $\mathrm{Var}_{\mathbb{P}^{\otimes N}}$ being the corresponding expectation and variance, respectively. Moreover, from the learning sample $\mathcal{Z}^N$, we define the following subsamples:
\begin{equation}\label{eq:Sample}
    \mathcal{Z}_i^N := \left\{\bar{X}^{ji}, ~~ j \in \mathcal{J}_i\right\}, ~~ \mathrm{where} ~~ \mathcal{J}_i := \left\{j \in \{1, \ldots, N\}: ~ (\bar{X}^j, i) \in \mathcal{Z}^N\right\}, ~~ i \in \mathcal{Y} = \{0,1\}.
\end{equation}
More precisely, for each $i \in \mathcal{Y}$, the subsample $\mathcal{Z}_i^N$ contains diffusion paths $\bar{X}^{1i}, \ldots, \bar{X}^{Ni}$ that belong to the class $i$. The subsample $\mathcal{Z}_i^N$ is used to build a nonparametric estimator of the drift function $b_i^*$. Its random size $N_i$ is given by $N_i := \sum_{j \in \mathcal{J}_i}{1} =  \sum_{j=1}^{N}\mathds{1}_{Y_j = i} \sim \mathrm{Binomial}(N, p_i^*)$. For each $i \in \mathcal{Y}$, we define the following conditional probability,
\begin{align*}
    \mathbb{P}_i^{\otimes N}\left(.\right) := \mathbb{P}^{\otimes N}\left(. \biggm\vert \mathds{1}_{Y_1 = i}, \ldots, \mathds{1}_{Y_N = i}\right),
\end{align*}
and its corresponding expectation $\mathbb{E}_{\mathbb{P}_i^{\otimes N}}$ and variance $\mathrm{Var}_{\mathbb{P}_i^{\otimes N}}$ are defined by
\begin{align*}
    &~ \mathbb{E}_{\mathbb{P}_i^{\otimes N}}\left[.\right] := \mathbb{E}_{\mathbb{P}^{\otimes N}}\left[. \biggm\vert \mathds{1}_{Y_1 = i}, \ldots, \mathds{1}_{Y_N = i}\right], ~~  \mathrm{Var}_{\mathbb{P}_i^{\otimes N}}\left[.\right] := \mathrm{Var}_{\mathbb{P}^{\otimes N}}\left[. \biggm\vert \mathds{1}_{Y_1 = i}, \ldots, \mathds{1}_{Y_N = i}\right].
\end{align*}
Finally, in the sequel, we adopt the following notations:
\begin{itemize}
    \item $\mathcal{W}$ is the Wiener measure in space $\mathcal{X} = \mathcal{C}\left([0,T], \mathbb{R}\right)$.
    \item $\mu$ is the Lebesgue measure on $\mathbb{R}$ and $\mu^{(n)}$ the Lebesgue measure on $\mathbb{R}^n, ~ n \in \mathbb{N} \setminus\{0,1\}$. 
    \item When two measures $\mu_1$ and $\mu_2$ are equivalent, we denote $\mu_1 \sim \mu_2$.
    \item For all $p,q \in \mathbb{N}^*$ such that $q > p$, $[\![p,q]\!] = \left\{p, p+1, \ldots, q\right\}$.
    \item For all $p \in \mathbb{R}_{+*}$, $\lfloor p \rfloor$ is the largest integer strictly smaller than $p$, and $\lceil p \rceil$ is the smallest integer strictly greater than $p$.
    \item For all $\phi \in L^2(\mathbb{R}, \mathbb{R})$, $\|\phi\|$ denotes the $L^2-$norm of $\phi$ and $\mathrm{Supp}(\phi)$ its support.
    \item For all $\phi \in \mathcal{C}_b(\mathbb{R}, \mathbb{R})$, $\|\phi\|_{\infty}$ denotes the supremum norm of $\phi$.
    \item For all $\psi \in \mathcal{C}^{\infty}(\mathbb{R}, \mathbb{R})$ and for all $\ell \in \mathbb{N}^*$, $\psi^{(\ell)}$ is the $\ell^{\mathrm{th}}$ derivative of $\psi$.
    \item For all $I \subset \mathbb{R}$, $\mathrm{Int}(I)$ denotes the topological interior of $I$, and $|I| = \sum_{x \in I}1$ its cardinality.
    \item For any subset $A \subset \mathbb{R}$, $x \mapsto \mathds{1}_{x \in A}$ denotes the indicator function of $A$.
\end{itemize}

\subsection{Assumptions}
\label{subsec:ass}

The establishment of the main results of this paper requires one to impose some key assumptions on the diffusion model under study. To this end, the following assumptions are made on the drift coefficients.

\begin{assumption}
\label{ass:Reg}
$b_0^*$ and $b_1^*$ are compactly supported and $\mathrm{Supp}(b_i^*) \subset I$ for each $i \in \mathcal{Y}$, where $I \subset \mathbb{R}$ is a non-empty compact interval independent of ${\bf b}^* = (b_0^*, b_1^*)$. In addition, $b_0^*$ and $b_1^*$ belong to the H\"older class $\Sigma(\beta, R)$ given by
\begin{equation*}
    \Sigma(\beta, R) := \left\{f \in \mathcal{C}^{\lfloor \beta \rfloor}(\mathbb{R}, \mathbb{R}), ~~ \left|f^{(\lfloor \beta \rfloor)}(x) - f^{(\lfloor \beta \rfloor)}(y)\right| \leq R|x-y|^{\beta - \lfloor \beta \rfloor}, ~~ x,y \in \mathbb{R}\right\},
\end{equation*}
where $\beta \geq 1$ is the smoothness parameter of the space and $R>0$.
\end{assumption}

\begin{assumption}\label{ass:hormander}
    $b_0^*$ and $b_1^*$ satisfy $b_0^*(x_0) \neq b_1^*(x_0)$ and $\mu(\{x \in \mathbb{R} : b_0^*(x) \neq b_1^*(x)\})>0$.
\end{assumption}
The above assumptions on our diffusion model provide an adequate framework for establishing the key results of this paper. Assumption~\ref{ass:Reg} states that for each $i \in \mathcal{Y}$ there exist $A^i, B^i \in \mathbb{R}$ such that $A^i < B^i$ and $\mathrm{Supp}(b_i^*) = [A^i, B^i] \subset I$. This assumption is required to ensure that the resulting Nadaraya-Watson estimators of the drift functions are consistent and reach a convergence rate of order $N^{-\beta/(2\beta+1)}$, a crucial point for the study of faster minimax rates for the plug-in classifier. We give more details on this specific point in Section~\ref{sec:main-results}. Moreover, since $b_0^*, b_1^* \in \mathcal{C}^{\lfloor \beta\rfloor}\left(\mathbb{R}, \mathbb{R}\right)$, $b_0^*$ and $b_1^*$ and their derivatives should be smooth on the boundaries of $\mathrm{Supp}(b_0^*)$ and $\mathrm{Supp}(b_1^*)$ respectively. Functions of this kind are largely studied in the literature and are particularly used to build bases of compactly supported functions like the spline basis (see, \textit{e.g.} \cite{gyorfi2006distribution}) or bases of compactly supported wavelet functions (see, \textit{e.g.} \cite{hardle2012wavelets}).\\
The immediate implications of Assumption~\ref{ass:Reg} are the following.
\begin{itemize}
    \item[(i)] The drift coefficients $b_0^*$ and $b_1^*$ are Lipschitz functions, which implies that our diffusion model~\eqref{eq:classif-model} admits a unique strong solution $X = (X_t)_{t \in [0,T]}$ (see, \textit{e.g.} \cite{karatzas2014brownian}, \textit{Chapter 5, Theorem 2.5, p.287}). Moreover, the diffusion process $X$ admits a transition density $(s,t,x,y) \in [0,T]^2 \times \mathbb{R}^2 \mapsto \Gamma_X(s,t,x,y)$ given by $\Gamma_X(s,t,x,y) := p_0^*\Gamma_{X|Y=0}(s,t,x,y) + p_1^*\Gamma_{X|Y=1}(s,t,x,y)$, where $\Gamma_{X|Y=0}$ and $\Gamma_{X|Y=1}$ are, respectively, the transition densities of $X$ on events $\{Y = 0\}$ and $\{Y = 1\}$. The transition density $\Gamma_X$ of $X$ plays a significant role in the study of the lower bound on the excess risk of the plug-in type classifier. 
    \item[(ii)] Since the diffusion coefficient of our model is known and equal to $1$, Assumption~\ref{ass:Reg} implies the following Novikov's condition
    \begin{equation*}
        \mathbb{E}_X\left[\exp\left(\dfrac{1}{2}\int_{0}^{T}b_Y^*(X_s)ds\right)\right] < \infty.
    \end{equation*}
    It follows from Girsanov's theorem (see \textit{e.g.} \cite{revuzyor1999}, Chapter VIII, p.325-333) that 
    \begin{equation*}
        \forall ~ t \in [0,T], ~~ \dfrac{d\mathbb{P}_X}{d\mathbb{P}_W}{\biggm\vert\mathcal{F}_t} := \exp\left(\int_{0}^{t}b_Y^*(X_s)dX_s -\dfrac{1}{2}\int_{0}^{t}b_Y^{*2}(X_s)ds\right),
    \end{equation*}
    where $\mathbb{P}_W$ is the probability distribution of the standard Brownian motion $W = (W_t)_{t \geq 0}$. We deduce that $\mathbb{P}_X \circ X^{-1} \sim \mathcal{W}$. This equivalence relation is extensively used in the proof of Theorem~\ref{thm:lower-bound}. In fact, the proof method, based on the key result of Assouad's lemma adapted to the classification problem, requires the construction of a partition of the space $\mathcal{X} = \mathcal{C}\left([0,T], \mathbb{R}\right)$ of diffusion paths (see \cite{Audibert2004ClassificationUP}). Then, it is crucial to ensure that for any $S \subset \mathcal{X}$ and for any $t \in [0,T]$, $\mu\left(\{f(t), ~ f \in S\}\right) > 0$, which is derived from the fact that $\mathcal{W}(S) > 0$. As a result, conditional on $\{X \in S\}$, the diffusion process $X$ takes values in a subset of $\mathbb{R}$ that contains a non-empty and continuous subset.
\end{itemize}
Assumption~\ref{ass:hormander} provides a framework in which low-noise conditions can be established. In fact, the proof technique for establishing the result of Proposition~\ref{prop:margin-condition} that characterizes low-noise conditions requires the continuity of the probability density of the following random variable
\begin{equation*}
    Z_T := \int_{0}^{T}(b_1^* - b_0^*)(X_t)dW_t.
\end{equation*}
In the literature, one can find adequate assumptions on the drift coefficients that ensure the existence of a continuous density of $Z_T$. For example, it suffices to add to Assumptions~\ref{ass:Reg} and \ref{ass:hormander} that $b_0^*, b_1^* \in \mathcal{C}^{\infty}(\mathbb{R}, \mathbb{R})$ (see \cite{nualart2006malliavin}, \textit{Chapter 2, Theorem 2.3.3, p.133}), or add to Assumption~\ref{ass:Reg} the ellipticity condition on $b_1^* - b_0^*$ (see, \textit{e.g.} \cite{gobet2002lan}). In each of these two cases, the assumptions on the coefficients $b_0^*$ and $b_1^*$ will be too restrictive for our diffusion model, as these functions are already compactly supported. The goal is to keep the assumptions on $b_0^*$ and $b_1^*$ as weak as possible. The first point of Assumption~\ref{ass:hormander} is analogous to H\"ormander's condition considered in \cite{nualart2006malliavin}, \textit{Chapter 2}, in the context of the study of random variables with smooth density functions. The second point, for its part, ensures the existence of a continuous subset of $\mathrm{Supp}(b_0^*) \cup \mathrm{Supp}(b_0^*)$ in which the two coefficients do not intersect. This condition is essential to ensure that $\mathbb{E}_X\left[Z_T^2\right] > 0$, the probability distribution of $X$ being non-atomic. It also provides a non-trivial statistical setting avoiding a reduction to a one-class model through a near-total overlap between the two classes. To this end, we derive the following result.    

\begin{lemme}\label{lm:bounded-density}
Under Assumption~\ref{ass:Reg} and \ref{ass:hormander}, we have $\mathbb{E}_{X}\left[Z_T^2\right] > 0$, and the random variable $Z_T$ has a continuous and bounded density. 
\end{lemme}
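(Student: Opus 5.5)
Write $f := b_1^* - b_0^*$. Under Assumption~\ref{ass:Reg}, $f$ is bounded, Lipschitz and supported in $I$. By the It\^o isometry,
\begin{equation*}
\mathbb{E}_X\left[Z_T^2\right] = \mathbb{E}_X\left[\int_0^T f^2(X_t)\,dt\right] = \int_0^T\int_{\mathbb{R}} f^2(y)\,\Gamma_X(0,t,x_0,y)\,dy\,dt .
\end{equation*}
The first step is to show this quantity is positive. By Girsanov (point (ii) above), $\mathbb{P}_X\circ X^{-1}\sim\mathcal{W}$, so for each $t>0$ the law of $X_t$ is equivalent to $\mathcal{N}(x_0,t)$ and $\Gamma_X(0,t,x_0,\cdot)>0$ Lebesgue-a.e. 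Assumption~\ref{ass:hormander} gives $\mu(\{f\neq 0\})>0$, so the inner integral is strictly positive for every $t\in(0,T]$. This proves the first claim. As an alternative route, continuity of $f$ at $x_0$ with $f(x_0)\neq0$ also gives $\mathbb{E}_X[f^2(X_t)]>0$ for small $t$.

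For the density, I would use a Fourier argument: show that the characteristic function $\varphi(u)=\mathbb{E}_X[e^{iuZ_T}]$ is integrable. Then $Z_T$ has the continuous bounded density $p(z)=(2\pi)^{-1}\int e^{-iuz}\varphi(u)\,du$, with $\|p\|_\infty\le (2\pi)^{-1}\|\varphi\|_{L^1}$.

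The key device is conditioning on the initial piece of the path, where $f$ is nondegenerate. By continuity of $f$ at $x_0$, pick $\varepsilon,c>0$ with $|f(x)|\ge c$ on $[x_0-\varepsilon,x_0+\varepsilon]$. Let $\tau$ be the exit time of $X$ from this interval, capped at a small $t_1\le T$. Set $A_u=\int_0^{\tau}f^2(X_s)\,ds\ge c^2\tau$. Since $e^{iuM_t+\frac{u^2}{2}\langle M\rangle_t}$ is a martingale, one cannot directly get $\varphi(u)\le \mathbb{E}[e^{-u^2A/2}]$. Instead I would use a time change: by Dambis--Dubins--Schwarz, $Z_T=\beta_{\langle Z\rangle_T}$ for a Brownian motion $\beta$. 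Alternatively, and more robustly, I would use the Malliavin integration-by-parts approach.

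Concretely, that approach goes as follows. $Z_T$ is Malliavin differentiable because $f$ is Lipschitz and $X$ has a Lipschitz drift, and
\begin{equation*}
D_rZ_T = f(X_r) + \int_r^T f'(X_s)D_rX_s\,dW_s, \qquad D_rX_s=\exp\Big(\int_r^s b_Y^{*\prime}(X_v)dv\Big).
\end{equation*}
The steps are then:
\begin{enumerate}
\item Establish $\gamma:=\int_0^T|D_rZ_T|^2dr>0$ a.s., with $\mathbb{E}[\gamma^{-p}]<\infty$ for some $p$. Near $r=0$, $D_rZ_T\approx f(x_0)\neq 0$, which is exactly where the H\"ormander-type first point of Assumption~\ref{ass:hormander} enters.
\item Apply Nualart's criterion (\cite{nualart2006malliavin}, Proposition 2.1.1 and its density formula $p(z)=\mathbb{E}[\mathds{1}_{Z_T>z}\,\delta(DZ_T/\gamma)]$). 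This gives a continuous bounded density as soon as $DZ_T/\gamma\in\mathrm{Dom}(\delta)$ with $\delta(DZ_T/\gamma)\in L^1$. For continuity, dominated convergence in $z$ suffices, since $Z_T$ has no atoms once the density formula holds.
\item The condition in step 2 requires $Z_T\in\mathbb{D}^{2,p}$. Because $\beta\ge1$ only guarantees $f\in\mathcal{C}^1$ with Lipschitz derivative when $\beta\ge 2$, I would localize. Replace $\gamma$ by the lower bound $\gamma\ge\int_0^{\tau\wedge t_1}|D_rZ_T|^2dr$ and use smooth cutoffs of $\tau$, in the spirit of Nualart's localized criterion (Theorem 2.1.3 and its bounded-density refinements), requiring only one derivative.
\end{enumerate}

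The main obstacle is the negative moment bound $\mathbb{E}[\gamma^{-p}]<\infty$, together with the integrability of the Skorokhod integral under only $\mathcal{C}^1$-type regularity. For the negative moments, I would write $D_rZ_T=f(X_r)+R_r$. The remainder satisfies $\mathbb{E}\sup_{r\le t}|R_r|^q\lesssim t^{q/2}$ on short windows, because $f'$ is bounded and $D_rX_s$ is bounded by $e^{T\|b'\|_\infty}$. Hence on $\{\tau\wedge t_1\ge t\}$ we get $\gamma\ge c^2t/4$ except on an event of probability $O(t^{q/2}/\ldots)$. Combined with $\mathbb{P}(\tau<t)\le e^{-\varepsilon^2/(4t)}$, this yields $\mathbb{P}(\gamma<\eta)=O(\eta^{k})$ for all $k$, and hence all negative moments. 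If the second Malliavin derivative turns out to be unavailable, I would fall back on a direct estimate $|\varphi(u)|\lesssim |u|^{-2}$. This would come from conditioning on $\mathcal{F}$ generated by $W$ after a short window $[0,t]$ with $t=|u|^{-2}\log^2|u|$, where $Z_T$ is close to a Gaussian of variance about $f(x_0)^2t$, giving the required integrability.
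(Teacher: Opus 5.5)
Your proof that $\mathbb{E}_X[Z_T^2]>0$ is fine and is essentially the paper's argument. The density part, however, fails at the nondegeneracy step.

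\textbf{Where the nondegeneracy argument breaks.} Set $f=b_1^*-b_0^*$. Your formula $D_rZ_T=f(X_r)+R_r$ with $R_r=\int_r^T f'(X_s)D_rX_s\,dW_s$ is correct. But $R_r$ is an integral over $[r,T]$, not over a short window. As $r\to0$ it tends to $\int_0^T f'(X_s)D_0X_s\,dW_s$, which is an $O(1)$ random variable. So the bound $\mathbb{E}\sup_{r\le t}|R_r|^q\lesssim t^{q/2}$ is false. Near $r=0$ one has $D_rZ_T\approx D_0Z_T=f(x_0)+\int_0^T f'(X_s)D_0X_s\,dW_s$, not $f(x_0)$, and $f(x_0)\neq0$ says nothing about the small values of $D_0Z_T$.

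Your Fourier fallback has the same defect. Given the increments of $W$ after time $t$, the term $\int_t^T f(X_s)\,dW_s$ depends on $X_t\approx x_0+W_t$ with $O(1)$ sensitivity. Hence the conditional Gaussian part has variance about $t\,(D_0Z_T)^2$, not $t\,f(x_0)^2$.

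Separately, the localization step is not yet an argument. Nualart's Theorem~2.1.3 yields only absolute continuity. The condition $DZ_T/\gamma\in\mathrm{Dom}\,\delta$ requires a second Malliavin derivative of $Z_T$, hence of $f'$ and $b_Y^{*\prime}$, which you do not have for $\beta<2$.

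\textbf{The conclusion itself can fail.} This gap cannot be closed under Assumptions~\ref{ass:Reg} and~\ref{ass:hormander} alone. Take the following example:
\begin{itemize}
\item $x_0=1$, and $b_0^*$ vanishing on $[-1,3]$;
\item $b_1^*=b_0^*+f$, with $f$ smooth, compactly supported, and $f(x)=x$ on $[-1,3]$.
\end{itemize}
Multiplying $f$ by a small constant to fit $\Sigma(\beta,R)$ only rescales what follows. Let $E=\{\sup_{t\le T}|W_t|<2\}$. On $\{Y=0\}\cap E$ one has $X=1+W$, so by locality of the stochastic integral
\[
Z_T=\int_0^T(1+W_t)\,dW_t=\frac{(1+W_T)^2-(1+T)}{2}.
\]
With $z_*=-(1+T)/2$ this gives
\[
\mathbb{P}(z_*\le Z_T\le z_*+\epsilon)\ge p_0^*\,\mathbb{P}\bigl(E,\,|1+W_T|\le\sqrt{2\epsilon}\bigr)\ge c\sqrt{\epsilon}.
\]
So $Z_T$ has no bounded density, even though both assumptions hold. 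Consistently, $D_rZ_T=1+W_T$ on this event, so $\gamma=T(1+W_T)^2$ and $\mathbb{E}[\gamma^{-p}]=\infty$ for $p\ge 1/2$.

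\textbf{Comparison with the paper.} The paper's proof takes the same Malliavin route: Nualart's Proposition~2.1.1, with nondegeneracy read off at $t=0$. It obtains $D_0Z_T=f(x_0)\times(\text{positive exponential})$ from an incorrect formula for $D_tZ_T$; yours is the correct one. So the paper does not supply the missing ingredient either. Some additional nondegeneracy assumption, or a weaker conclusion, is needed.
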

The above result is crucial to prove the low-noise conditions provided by Proposition~\ref{prop:margin-condition}, a key result that allows us to derive a rate faster than $N^{-1/2}$ for the excess risk of the plug-in classifier. This result is proven using the Malliavin calculus, relying on Proposition 2.1.1 in \cite{nualart2006malliavin}, \textit{Chapter 2, p.86}. The proof of Lemma~\ref{lm:bounded-density} is provided in the appendix.

In the next section, we give a more explicit description of the Bayes classifier, its empirical counterpart, and the set $\mathcal{G}$ of classification rules.   

\subsection{Classification procedure}
\label{subsec:BayesClassifier}

The Bayes classifier $g^*$ defined in Equation~\ref{eq:bayes1} is the most effective classification rule in the sense of minimizing the risk of misclassification. $g^*$ is given for each $X \in \mathcal{X}$ by $g^*(X) = \mathds{1}_{\Phi^*(X) \geq 1/2}$ where $\Phi^*: \psi \in \mathcal{X} \mapsto \mathbb{P}_{X,Y}(Y=1 | X = \psi)$ is the regression function. Under Assumption~\ref{ass:Reg} and from \cite{denis2020classif}, \textit{Proposition 1}, the following holds:
\begin{equation*}
    \Phi^*(X) = \Phi_{\bf b^*}^*(X) = p_1^*\exp\left(F_{\bf b^*}^1(X)\right)/\left[p_0^*\exp\left(F_{\bf b^*}^0(X)\right) + p_1^*\exp\left(F_{\bf b^*}^1(X)\right)\right],
\end{equation*}
where for each $i \in \mathcal{Y}$,
\begin{equation*}
    F_{\bf b^*}^i(X) := \int_{0}^{T}b_i^*(X_t)dX_t - \dfrac{1}{2}\int_{0}^{T}b_i^{*2}(X_t)dt.
\end{equation*}
The above result is obtained using the Bayes rule combined with Girsanov's theorem. Consequently, the Bayes classifier $g^*$ is fully determined by the unknown elements ${\bf b^*} = (b_0^*, b_1^*)$ and ${\bf p^*} = (p_0^*, p_1^*)$, which makes it a classifier that is computationally untractable in practice. A common strategy is to propose , from a learning sample $\mathcal{Z}^N =  \left\{(\bar{X}^j, Y_j), ~~ j = 1,\ldots, N\right\}$ made up of $N$ independent copies of $(\bar{X}, Y)$ where $\bar{X} = (X_{t_k^n})_{0\leq k \leq n} \in \mathbb{R}^{n+1}$ is a discrete observation of $X$, an empirical classification rule $\widehat{g}$ that mimics the Bayes classifier $g^*$ and given for all $\bar{X} \in \mathbb{R}^{n+1}$ by $\widehat{g}(\bar{X}) = \mathds{1}_{\widehat{\Phi}_{\w{\bf b}}(\bar{X}) \geq 1/2}$, where
\begin{equation*}
    \widehat{\Phi}_{\w{\bf b}}(\bar{X}) := \widehat{p}_1\exp(\bar{F}_{\w{\bf b}}^1(\bar{X}))/\left[\widehat{p}_0\exp(\bar{F}_{\w{\bf b}}^0(\bar{X})) + \widehat{p}_1\exp(\bar{F}_{\w{\bf b}}^1(\bar{X}))\right],
\end{equation*}
and, considering $\xi: [0,T] \mapsto \{t_0^n, t_1^n, \ldots, t_n^n\}$ such that $\xi(t) = t_k^n$ for all $t \in [t_k^n, t_{k+1}^n)$,
\begin{equation}\label{eq:discrete}
    \begin{aligned}
        \bar{F}_{\w{\bf b}}^i(X) = &~ \int_{0}^{T}\w{b}_i(X_{\xi(t)})dX_t - \dfrac{1}{2}\int_{0}^{T}\w{b}_i^{2}(X_{\xi(t)})dt\\
        = &~ \sum_{k=0}^{n-1}\w{b}_i(X_{t_k^n})(X_{t_{k+1}^n} - X_{t_k^n}) - \dfrac{1}{2}\sum_{k=0}^{n-1}(t_{k+1}^n - t_k^n)\w{b}_i^2(X_{t_k^n}),
    \end{aligned}
\end{equation}
and $\bar{F}_{\w{\bf b}}^i(X) = \bar{F}_{\w{\bf b}}^i(\bar{X}) = F_{\w{\bf b}}^i(\bar{X})$. The estimator $\w{\bf p} = \left(\w{p}_0, \w{p}_1\right)$ is built from the learning sample $\mathcal{Z}^N$ as follows:
\begin{equation}\label{eq:estimation-p}
    \w{p}_i := \dfrac{1}{N}\sum_{j=1}^{N}\mathds{1}_{Y_j = i}, ~~ i \in \mathcal{Y}.
\end{equation}
In addition, each drift function $b_i^*$ is estimated from the sample paths $\mathcal{Z}_i^N$ given by Equation~\eqref{eq:Sample}. We give more details on nonparametric estimators of coefficients $b_0^*$ and $b_1^*$ in Section~\ref{sec:main-results}. As we can see, the empirical classifier $\widehat{g}$ is deduced from $g^*$ simply by replacing the unknown elements by their respective estimators constructed from $\mathcal{Z}^{N}$. This strategy is known as the plug-in principle and $\widehat{g}$ is then called a plug-in classifier. Focusing on the set $\mathcal{G}$ of classification rules, we define the following sets.
\begin{align*}
    \mathbf{B}(\beta, R) := &~ \left\{{\bf b^*} = (b_0^*, b_1^*) \in \Sigma(\beta, R) \times \Sigma(\beta, R): b_0^*(x_0) \neq b_1^*(x_0), ~ \mu(\mathfrak{B}_{\bf b^*}) > 0
    \right\},\\
    {\bf P} := &~ \left\{{\bf p^*} = (p_0^*, p_1^*) \in (0,1)^2:
    p_0^* + p_1^* = 1 \right\},\\
    {\bf F}(\beta, R) := &~ {\bf B}(\beta, R) \times {\bf P},
\end{align*}
where $\mathfrak{B}_{\bf b^*} = \left\{x \in \mathbb{R} : b_0^*(x) \neq b_1^*(x)\right\}$. The function space ${\bf B}(\beta, R)$ gathers all possible drift functions $b_0^*$ and $b_1^*$ that satisfy Assumptions~\ref{ass:hormander} and belong to the H\"older class $\Sigma(\beta, R)$. ${\bf P}$ is the set of all possible discrete laws of the label $Y$. We set the condition $(p_0^*, p_1^*) \in (0,1)$ to ensure, for $N$ large enough, that the two classes appear in the learning sample. Then, the set ${\bf F}(\beta, R)$ is the one that contains the real value of the model parameter ${\bf f^*} = ({\bf b^*}, {\bf p^*})$. From now on, in order to adapt our notations to the classification problem, the following new notation is adopted:
\begin{equation*}
    \Phi_{\bf b^*}^* = \Phi_{\bf f^*}, ~~ \widehat{\Phi}_{\w{\bf b}} = \Phi_{\widehat{\bf f}}, ~~ g^* = g_{\bf f^*}, ~~ \widehat{g} = g_{\widehat{\bf f}}.
\end{equation*}
Finally, the set $\mathcal{G}$ of classifiers is given by
\begin{equation*}
    \mathcal{G} := \left\{g_{\bf f}: X \in \mathcal{X} \mapsto \mathds{1}_{\Phi_{\bf f}(X) \geq 1/2} \in \mathcal{Y}, ~ {\bf f} \in {\bf F}(\beta, R) \right\}.
\end{equation*}
In the sequel, we rather rely on the space ${\bf F}(\beta, R)$ as the model parameter ${\bf f^*} = ({\bf b^*}, {\bf p^*})$ fully determines the Bayes classifier $g^* = g_{\bf f^*}$ and satisfies 
\begin{equation*}
    {\bf f}^* \in \underset{{\bf f} \in {\bf F}(\beta, R)}{\arg\min} R(g_{\bf f}). 
\end{equation*}
In the next section, we briefly present the low-noise condition and discuss its effectiveness for our diffusion model.

\subsection{Low-noise conditions}
\label{subsec:low-noise}

The prediction principle for any classifier $g_{\bf f} \in \mathcal{G}$ consists, for any feature $X \in \mathcal{X}$, of returning one of the two classes $0$ and $1$ based on the information provided by $\Phi_{\bf f}(X)$. This information is generally too noisy when the value of $\Phi_{\bf f}(X)$ is in the vicinity of $1/2$, leading to a higher risk of misclassification. In these conditions, the highest possible convergence rate that can be reached by any empirical classification rule is $N^{-1/2}$ (see, \textit{e.g.}, \cite{Yang99}). This convergence rate can be improved when the probability of $\Phi_{\bf f}(X)$ being in the vicinity of $1/2$ is sufficiently small; in this case, we say that we are in low-noise conditions. When these conditions are assumed to hold, it is referred to as a margin assumption on the regression function (see, \textit{e.g.} \cite{audibert2007fast}). The following result was proved in \cite{denis2025empirical}, \textit{Proposition 4.3} in a multiclass setting.

\begin{prop}\label{prop:margin-condition}
    There exists a constant $C > 0$ depending on ${\bf b^*} = (b_0^*, b_1^*) \in {\bf B}(\beta, R)$ such that for all $\varepsilon \in (0, 1/8)$,
\begin{equation*}
    \mathbb{P}_{X}\left(0 < \left|\Phi_{\bf f^*}(X) - \dfrac{1}{2}\right| \leq \varepsilon\right) \leq C\varepsilon.
\end{equation*}
\end{prop}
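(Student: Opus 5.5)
We reduce the event $\{0<|\Phi_{\bf f^*}(X)-1/2|\le\varepsilon\}$ to a small-ball event for a one-dimensional random variable with bounded density, and then use Lemma~\ref{lm:bounded-density}.

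\textbf{Step 1: rewrite the event through the log-likelihood ratio.} Write
\[
\Phi_{\bf f^*}(X)=\sigma(G(X)),\qquad \sigma(u)=\frac{1}{1+e^{-u}},
\]
where
\[
G(X)=\log(p_1^*/p_0^*)+F^1_{\bf b^*}(X)-F^0_{\bf b^*}(X).
\]
Since $\sigma(0)=1/2$ and $\sigma'(u)=\sigma(u)(1-\sigma(u))$, the condition $|\sigma(G)-1/2|\le\varepsilon$ with $\varepsilon<1/8$ forces $\sigma(G)\in[3/8,5/8]$. On that range $\sigma'\ge 15/64$, so the mean value theorem gives $|G(X)|\le c\,\varepsilon$ with $c=64/15$. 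It therefore suffices to bound $\mathbb{P}_X(|G(X)|\le c\varepsilon)$.

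\textbf{Step 2: express $G$ through $Z_T$.} Substitute $dX_t=b_Y^*(X_t)dt+dW_t$. This gives
\[
G(X)=\log(p_1^*/p_0^*)+Z_T+\int_0^T (b_1^*-b_0^*)(X_t)\Big(b_Y^*(X_t)-\tfrac12(b_1^*+b_0^*)(X_t)\Big)dt .
\]
The drift term equals $\pm\frac12\int_0^T(b_1^*-b_0^*)^2(X_t)dt$, with sign $+$ on $\{Y=1\}$ and $-$ on $\{Y=0\}$. Call this quantity $\pm\frac12 A_T$. By Assumption~\ref{ass:Reg}, $|A_T|\le T\|b_1^*-b_0^*\|_\infty^2$, so it is bounded.

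We condition on $Y=i$ and split
\[
\mathbb{P}_X(|G|\le c\varepsilon)=\sum_i p_i^*\,\mathbb{P}\big(|\log(p_1^*/p_0^*)+Z_T\pm\tfrac12A_T|\le c\varepsilon \,\big|\, Y=i\big).
\]

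\textbf{Step 3: the main obstacle, and how to handle it.} The trouble is that $Z_T$ and $A_T$ are dependent, so Lemma~\ref{lm:bounded-density} (a bounded density for $Z_T$) does not directly give a bounded density for $Z_T\pm\frac12A_T$.

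The first thing I would try is to apply the Malliavin argument of the lemma to the whole variable $H_i=Z_T\pm\frac12A_T$ under each class law. $H_i$ is a functional of $W$ with the same Malliavin regularity as $Z_T$: the $b_i^*$ are $\mathcal C^1$ with compact support, so $D_sH_i$ is expressed through the first-variation process of $X$. Nondegeneracy of $\|DH_i\|^2_{L^2}$ should come from the same source as for $Z_T$. Indeed $D_sH_i=(b_1^*-b_0^*)(X_s)+\int_s^T(\cdots)\,dW_r+\int_s^T(\cdots)\,dr$, and for $s$ near $0$ this is close to $(b_1^*-b_0^*)(x_0)\neq0$ by Assumption~\ref{ass:hormander}. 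Applying Nualart's Proposition~2.1.1 under each conditional law then yields a bounded density $\rho_i$ for $H_i$.

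If bounded densities of $H_0,H_1$ are obtained, then
\[
\mathbb{P}(|G|\le c\varepsilon)\le \sum_i p_i^*\cdot 2c\varepsilon\|\rho_i\|_\infty ,
\]
giving the claim with $C=2c\max_i\|\rho_i\|_\infty$. The restriction to $0<|\Phi-1/2|$ is then harmless, since dropping it only enlarges the event.

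If that direct route fails, the alternative is a Girsanov change of measure to the Wiener measure, which is allowed by the equivalence noted in (ii). Under this measure $G$ becomes $\int_0^T (b_1^*-b_0^*)(W_t)\,dW_t$ plus a bounded drift term. The Radon–Nikodym density has all moments, so Hölder's inequality gives a small-ball bound of order $\varepsilon^{1-1/q}$ for every $q$. That is only nearly linear, which is why I would prefer the density approach.
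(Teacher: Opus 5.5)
Your Steps 1 and 2 are correct. In Step 2 you even get the $dt$-term right, namely $\pm\frac12\int_0^T(b_1^*-b_0^*)^2(X_t)\,dt$ according to the class, whereas the paper uses the class-free term $\frac12\int_0^T(b_1^{*2}-b_0^{*2})(X_s)\,ds$.

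The gap is Step 3, on which everything rests. A bounded density for $H_i=Z_T\pm\frac12A_T$ under $\{Y=i\}$ is assumed rather than proved, and the nondegeneracy heuristic you give for it is false. With $\phi=b_1^*-b_0^*$ and $b=b_i^*$,
\[
D_sH_i=\phi(X_s)+\int_s^T\phi'(X_r)D_sX_r\,dW_r\pm\int_s^T(\phi\phi')(X_r)D_sX_r\,dr,\qquad D_sX_r=\exp\Big(\int_s^r b'(X_u)\,du\Big).
\]
As $s\downarrow0$ the two integrals tend to the corresponding integrals over all of $[0,T]$. These are order-one random variables, not small corrections. So for small $s$, $D_sH_i$ is $\phi(x_0)$ plus an order-one random term and is not bounded away from $0$. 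The integrals only disappear as $s\uparrow T$, where $D_sH_i\to\phi(X_T)$, and this vanishes with positive probability because $\phi$ has compact support. Hence your argument does not get $\|DH_i\|_{L^2([0,T])}>0$ a.s. from Assumption~\ref{ass:hormander}.

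Nualart's Proposition~2.1.1 also needs more than a.s. positivity. It requires $DH_i/\|DH_i\|^2\in\mathrm{Dom}(\delta)$, usually checked via $H_i\in\mathbb{D}^{2,4}$ and $\mathbb{E}\|DH_i\|^{-8}<\infty$. That needs second derivatives of the drifts, whereas in the paper's convention $\beta\geq1$ only guarantees Lipschitz $b_i^*$. It also needs a quantitative negative-moment estimate.

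The Girsanov fallback does not avoid this. You get $\mathbb{P}_X(|G|\le c\varepsilon)\le\|d\mathbb{P}_X/d\mathcal{W}\|_{L^q(\mathcal{W})}\,\mathcal{W}(|G|\le c\varepsilon)^{1-1/q}$. Bounding $\mathcal{W}(|G|\le c\varepsilon)$ by $O(\varepsilon)$ is the same density problem, for a stochastic integral plus a dependent $dt$-term.

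The paper takes a different route, but it does not overcome this obstacle either. It bounds the $dt$-term $V_{\bf b^*}(X)$ between $\underline{C}$ and $\overline{C}$ and cuts this range into $m$ cells $[t_k,t_{k+1}]$. It then bounds $\mathbb{P}_X(A_k\le Z_T\le B_k\mid V_{\bf b^*}(X)\in[t_k,t_{k+1}])$ by $\|\Gamma\|_\infty(B_k-A_k)$, where $\Gamma$ is the unconditional density of $Z_T$ from Lemma~\ref{lm:bounded-density}. That step silently treats $Z_T$ and $V_{\bf b^*}(X)$ as independent, which is exactly the dependence you flagged. Using only the unconditional density, one gets $\sum_k\mathbb{P}_X(A_k\le Z_T\le B_k)$, which is of order $\overline{C}-\underline{C}+m\varepsilon$, not $O(\varepsilon)$.

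So the missing ingredient, in your proposal and in the paper alike, is a bounded density (or an $O(\varepsilon)$ small-ball estimate) for the full log-likelihood ratio $Z_T\pm\frac12\int_0^T(b_1^*-b_0^*)^2(X_t)\,dt$ under each class. Given that, your Steps 1--3 finish the proof at once.
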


The result of Proposition~\ref{prop:margin-condition} shows that the probability that $\Phi_{\bf f^*}(X)$ is in the neighborhood of $1/2$ is as small as the vicinity is narrow. This result could not have been deduced from \cite{gadat2020optimal}, \textit{Proposition 1}, as the diffusion model studied is a mixture of Gaussian processes. Dealing with diffusion processes with space-dependent coefficients is more challenging and raises issues about the existence of smooth probability densities for random variables of the form $\int_{0}^{T}\phi(X_t)dW_t$ where $\phi$ is not necessarily an elliptic function or infinitely differentiable. In \cite{denis2025empirical}, the result of Proposition~\ref{prop:margin-condition} was proved under restrictive regularity assumptions on the drift and diffusion coefficients that guarantee the existence of a smooth density function for $Z_T$ based on Theorem 2.3.3 in \cite{nualart2006malliavin}, Chapter 2, p.128. In the present paper, we establish that $Z_T$ admits a continuous and bounded density function under weaker regularity assumptions of the drift coefficients using the Malliavin calculus. Since \cite{denis2025empirical} is a preprint, for completeness, we include a proof using Lemma~\ref{lm:bounded-density} in the appendix. \\
We show in the proof of Theorem~\ref{thm:upper-bound} that
\begin{align*}
    \mathbb{E}_{\mathbb{P}^{\otimes N}}\left[R(g_{\w{\bf f}}) - R(g_{\bf f^*})\right] \leq &~ 2\mathbb{P}_{X}\left(0 < \left|\Phi_{\bf f^*}(X) - \dfrac{1}{2}\right| \leq \varepsilon\right) + \mathbb{E}_{\mathbb{P}^{\otimes N}}\left[\mathbb{P}_{X}\left(\left|\Phi_{\w{\bf f}}(X) - \Phi_{\bf f^*}(X)\right| \geq \varepsilon\right)\right].
\end{align*}
Thus, from Proposition~\ref{prop:margin-condition}, we obtain
\begin{equation}\label{eq:upbound}
    \mathbb{E}_{\mathbb{P}^{\otimes N}}\left[R(g_{\w{\bf f}}) - R(g_{\bf f^*})\right] \leq 2C\varepsilon^2 + \mathbb{E}_{\mathbb{P}^{\otimes N}}\left[\mathbb{P}_{X}\left(\left|\Phi_{\w{\bf f}}(X) - \Phi_{\bf f^*}(X)\right| \geq \varepsilon\right)\right].
\end{equation}
Note that the two terms on the right-hand side of Equation~\eqref{eq:upbound} behave antagonistically as $\varepsilon \rightarrow 0$. As a result, the idea of a  trade-off between the two terms is highlighted. However, the best way out is to prove that the second term on the right-hand side of Equation~\eqref{eq:upbound} converges to zero with a rate faster than a polynomial growth, as described in \cite{audibert2007fast}. In the next section, we establish an exponential inequality for the nonparametric estimator of each drift coefficient $b_i^*, ~ i \in \mathcal{Y}$, leading, for the second term, to a rate faster than $N^{-2\beta/(2\beta+1)}$ while the first term $2C\varepsilon^2$ is of order $N^{-2\beta/(2\beta+1)}$ (up to a logarithmic factor).

Now that the statistical setting has been clearly defined, the next section outlines the main results of the paper. 

\section{Main results}
\label{sec:main-results}

This section is devoted to the establishment of minimax convergence rates for the worst excess risk of the plug-in classifier $\w{g} = g_{\w{\bf f}}$. In Section~\ref{subsec:np-estimation}, we establish an exponential inequality for the nonparametric estimators of the two drift coefficients $b_i^*, ~ i \in \mathcal{Y} = \{0,1\}$. Sections~\ref{subsec:upper-bound} and~\ref{subsec:lower-bound} focus, respectively, on the upper bound and the lower bound of the worst excess risk of $\w{g} = g_{\w{\bf f}}$.   

\subsection{Nonparametric estimation of the drift coefficients}
\label{subsec:np-estimation}

We consider the Nadaraya-Watson estimators of the drift functions $b_0^*$ and $b_1^*$ proposed in \cite{marie2023nadaraya}. These estimators are best suited to the problem considered in this paper. Formally, let $t_0 \in (0,T)$. From subdivision $I_n = \{0=t_0^n, t_1^n, \ldots, t_n^n=T\}$ of the time interval $[0,T]$ with time step $\Delta_n=T/n$, set $k_0 = \lceil t_0/\Delta_n\rceil - 1$ and consider subdivision $\{s_{k_0}^{n}, \ldots, s_n^{n}\}$ of the time interval $[t_0, T]$ where $s_{k_0}^{n} = t_0$ and for all $k \in [\![k_0+1, n]\!], ~ s_{k}^{n} = t_k^n \in I_n$. Then, for each $i \in \mathcal{Y}$ and from the sample paths $\mathcal{Z}_i^N$, the Nadaraya-Watson estimator of $b_i^*$ is given as follows:
\begin{equation}\label{eq:kernel-estimator1}
    \widetilde{b}_{i,N,h_i, h_i^{\prime}}(x) := \dfrac{\widehat{(b\zeta)}_{i,N,h_i}(x)}{\widehat{\zeta}_{i,N,h_i^{\prime}}(x)}, ~~ x \in I_i,
\end{equation}
where $I_i \subset \mathbb{R}$ is the estimation interval,
\begin{equation*}
    \w{\zeta}_{i,N,h_i}(x) := \dfrac{\mathds{1}_{N_i > 1}}{N_i(T-t_0)}\sum_{j=1}^{N_i}\sum_{k=k_0}^{n-1}(s_{k+1}^{n} - s_k^{n})K_{h_i}(X_{s_k^{n}}^{ji} - x)
\end{equation*}
is a kernel estimator of the density function $f_i$ given for all $x \in \mathbb{R}$ by
\begin{align*}
    \zeta_i^*(x) = \dfrac{1}{T-t_0}\int_{t_0}^{T}\Gamma_{X|Y=i}(0, t, x_0, x)dt,
\end{align*}
and 
\begin{equation*}
    \widehat{(b\zeta)}_{i,N,h_i^{\prime}}(x) := \dfrac{\mathds{1}_{N_i > 1}}{N_i(T-t_0)}\sum_{j=1}^{N_i}\sum_{k=k_0}^{n-1}K_{h_i^{\prime}}(X_{s_k^n}^{ji} - x)(X_{s_{k+1}^n}^{ji} - X_{s_k^n}^{ji})
\end{equation*}
is a kernel estimator of $(b\zeta)_i^* = b_i^*\zeta_i^*$, with $h_i, h_i^{\prime} \in (0,1)$ the bandwidths and the function $x \mapsto K_{h_i}(x) = h_i^{-1}K(x/h_{i})$ defined from the kernel $K$. The Nadaraya-Watson estimator $\widetilde{b}_{i,N,h_i,h_i^{\prime}}$ is not necessarily well defined on the real line. Moreover, even if it was, the fact that the denominator $x \mapsto \w{\zeta}_{i,N,h_i^{\prime}}(x)$ vanishes at infinity can be a serious drawback in establishing an upper bound of the estimation risk of $\widetilde{b}_{i,N,h_i,h_i^{\prime}}$ of order $N^{-\beta/(2\beta+1)}$, a result crucial to achieving the objectives outlined in this article, since the resulting rate directly influences the choice of the bandwidth (see \cite{marie2023nadaraya}, \textit{Propositions 1 and 2}). Therefore, note that under Assumption~\ref{ass:Reg} and from \cite{denis2024nonparametric}, Lemma 5 with $\sigma^* = 1$ and $q=3/2$, the transition density $(t,x) \in (0,T] \times \mathbb{R} \mapsto \Gamma_{X|Y=i}(0,t,x_0,x)$ satisfies the following result:
\begin{equation*}
    \Gamma_{X|Y=i}(0,t,x_0,x) \geq \dfrac{1}{\mathfrak{C}\sqrt{t}}\exp\left(-\dfrac{2x^2}{3t}\right) \geq \dfrac{1}{\mathfrak{C}\sqrt{T}}\exp\left(-\dfrac{2x^2}{3t_0}\right), 
\end{equation*}
where the constant $\mathfrak{C} > 1$ is independent of ${\bf b^*} = (b_0^*, b_1^*)$. Then, there exists a constant $m > 0$ that is independent of ${\bf b^*}$ such that for all $i \in \mathcal{Y}$ and for all $x \in I, ~ \zeta_i^*(x) \geq m$. Therefore, we now consider, as in \cite{marie2023nadaraya}, the following Nadaraya-Watson estimator of drift $b_i^*$:
\begin{equation*}
    \widehat{b}_{i,N,h_i, h_i^{\prime}}(x) := \dfrac{\widehat{(b\zeta)}_{i,N,h_i^{\prime}}(x)}{\widehat{\zeta}_{i,N,h_i}(x)}\mathds{1}_{\w{\zeta}_{i,N,h_i}(x) \geq m}, ~~ x \in \mathrm{Supp}(b_i^*) \subset I.
\end{equation*}
This situation justifies the assumption on the respective supports of the drift coefficients $b_0^*$ and $b_1^*$. In addition, we make the following assumptions on the kernel $K$.
\begin{assumption}\label{ass:reg-kernel}
    The kernel $K$ belongs to $\mathbb{L}^2(\mathbb{R}, dx)$, and there exists a constant $C_K > 0$ such that
    \begin{equation*}
        \left|K(x) - K(y)\right| \leq C_K|x-y|, ~~ x,y \in \mathbb{R}.
    \end{equation*}
\end{assumption}
\begin{assumption}\label{ass:prop-kernel}
    There exists $\gamma \in \mathbb{N} \setminus\{0\}$ such that the functions $x \mapsto x^{k}K(x)$, $k \in [\![0, \gamma+1]\!]$ are integrable on $\mathbb{R}$ and satisfy
    \begin{align*}
        \int_{\mathbb{R}}K(x)dx = 1, ~~ \int_{\mathbb{R}}x^{k}K(x)dx = 0, ~~ k \in [\![1, \gamma]\!].
    \end{align*}
\end{assumption}
One can construct multiple kernels $K$ that fully satisfy Assumptions~\ref{ass:reg-kernel} and \ref{ass:prop-kernel}. A classical example of a kernel of order $\ell = \gamma$ is the function $K: \mathbb{R} \rightarrow \mathbb{R}$ given by
\begin{equation*}
    K(x) = \sum_{k=0}^{\gamma}\phi_m(0)\phi_m(x)\mathds{1}_{|x| \leq 1},
\end{equation*}
built from the orthonormal basis $\left\{\phi_m, m \in \mathbb{N}\right\}$ of Legendre polynomials in $\mathbb{L}^2([-1, 1], dx)$ (see \cite{tsybakov2008introduction}, \textit{Chapter 1, Proposition 1.3, p.10} for more details). Moreover, the same kernel also satisfies Assumption~\ref{ass:reg-kernel}, being continuously differentiable and compactly supported. 

\begin{theo}\label{thm:exp-bound-drift}
  Suppose $\Delta_n = \mathcal{O}(N^{-2}), ~ N \rightarrow \infty$ and for each $i \in \mathcal{Y}$, consider a strictly positive sequence $(\delta_{i,N})_{N}$ such that
    \begin{align*}
        &~ \delta_{i,N} \rightarrow 0, ~ N\delta_{i,N}^2 \rightarrow \infty, ~~ h_{i,N}^{\beta}, h_{i,N}^{\prime \beta} \underset{N \rightarrow \infty}{=} o(\delta_{i,N}) ~~ \mathrm{and} ~~ \delta_{i,N} \underset{N \rightarrow \infty}{=} o(\log^{-1}(N)).
    \end{align*}
    Under Assumptions~\ref{ass:Reg}, \ref{ass:reg-kernel} and \ref{ass:prop-kernel} with $\gamma = \left\lfloor \beta\right\rfloor + 1$, for each $i \in \mathcal{Y}$ and conditional on event $\{N_i > 1\}$, the following holds:
    \begin{align*}
       \mathbb{P}_i^{\otimes N}\left(\left\|\widehat{b}_{i,N,h_{i,N},h_{i,N}^{\prime}} - b_i^*\right\|_{\infty} \geq \delta_{i,N}\right) \leq &~ 2\exp\left[-\mathbf{C}N_i\delta_{i,N}^2(h_{i,N} \land h_{i,N}^{\prime})\right] + N_i\exp\left(-\dfrac{(T-t_0)\log^2(N)}{2\|K\|_{\infty}^2}\right)\\
       & + \dfrac{6\|b_i^*\|_{\infty}}{\delta_{i,N}}\exp\left(-\mathbf{C}^{\prime}N_ih_{i,N}\right),
    \end{align*}
    where 
    \begin{equation*}
        \begin{aligned}
            \mathbf{C}:= &~ \min\left\{\dfrac{m^2}{576C_{t_0,T}\left\|b_i^*\right\|_{\infty}^2\|K\|^2}, \dfrac{m^2}{1296C_{t_0,T}^{\prime}\left\|b_i^*\right\|_{\infty}^2\|K\|^2}, \dfrac{m^2}{1296C_{t_0,T}^{\prime\prime}\|K\|^2}, ~ i \in \mathcal{Y}\right\} > 0,\\
            \mathbf{C}^{\prime} := &~ \frac{3m^2}{96C_{t_0,T}\|K\|^2 + 16mT\|K\|_{\infty}} > 0
        \end{aligned}
    \end{equation*}
    and $C_{t_0,T}, C_{t_0,T}^{\prime}, C_{t_0,T}^{\prime\prime} > 0$ are constants depending on $t_0$ and $T$.
\end{theo}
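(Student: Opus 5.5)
We decompose the error of the ratio estimator through the lower bound $\zeta_i^*\geq m$ on $I$. Write $\w{\zeta}=\w{\zeta}_{i,N,h_{i,N}}$ and $\w{(b\zeta)}=\w{(b\zeta)}_{i,N,h_{i,N}^{\prime}}$. On the event $\{\|\w{\zeta}-\zeta_i^*\|_\infty\leq m/2\}$ we have $\w{\zeta}\geq m/2$. Restricting the threshold event $\{\w{\zeta}\geq m\}$ to a neighborhood where it is automatically met requires some care, and we will handle it through the term $\frac{6\|b_i^*\|_\infty}{\delta}e^{-\mathbf{C}'N_ih}$. On this event,
\begin{equation*}
\left|\w{b}_i-b_i^*\right|\leq \frac{|\w{(b\zeta)}-(b\zeta)_i^*|+\|b_i^*\|_\infty|\w{\zeta}-\zeta_i^*|}{m/2}.
\end{equation*}
Hence
\begin{equation*}
\mathbb{P}_i^{\otimes N}\left(\|\w{b}_i-b_i^*\|_\infty\geq\delta\right)\leq \mathbb{P}_i^{\otimes N}\left(\|\w{(b\zeta)}-(b\zeta)_i^*\|_\infty\geq m\delta/6\right)+\mathbb{P}_i^{\otimes N}\left(\|\w{\zeta}-\zeta_i^*\|_\infty\geq m\delta/(6\|b_i^*\|_\infty)\right)
\end{equation*}
plus the probability that $\w{\zeta}$ falls below $m$ at some point. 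The last term is controlled by a Bernstein bound on $\w{\zeta}-\zeta_i^*$ at a level of order $m$, which explains the $\delta$-free exponent $\mathbf{C}'N_ih$. The factor $\|b_i^*\|_\infty/\delta$ comes from a covering of $\mathrm{Supp}(b_i^*)$ with mesh proportional to $\delta$, using the Lipschitz property of $K$ (Assumption~\ref{ass:reg-kernel}), so that sup-norm deviations reduce to finitely many points.

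\textbf{Bias.} Each deviation is split into bias and stochastic parts. For $\w{\zeta}$, the mean is $K_h\ast\zeta_i^*$ up to a discretization error of order $\Delta_n^{1/2}/h=\mathcal{O}(N^{-1}h^{-1})$. Since $\zeta_i^*$ inherits $\beta$-H\"older regularity from the smooth transition density with Lipschitz drift, Assumption~\ref{ass:prop-kernel} with $\gamma=\lfloor\beta\rfloor+1$ gives a bias of $\mathcal{O}(h^\beta)=o(\delta)$, which is absorbed for $N$ large. The same holds for $\w{(b\zeta)}$ after writing $X_{s_{k+1}}-X_{s_k}=\int b_i^*(X_s)ds+(W_{s_{k+1}}-W_{s_k})$. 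The drift part yields $K_h\ast(b\zeta)_i^*$ plus a discretization error. The martingale part $M^j(x)=\sum_k K_{h'}(X^{j}_{s_k}-x)(W_{s_{k+1}}-W_{s_k})$ is centred.

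\textbf{Stochastic terms.} For the drift part, the summands are bounded by $\|b_i^*\|_\infty\|K\|_\infty T/h$ and their variance is bounded by $C_{t_0,T}\|b_i^*\|_\infty^2\|K\|^2/h$, using the Gaussian-type upper bound on the transition density from \cite{denis2024nonparametric}. Bernstein's inequality then gives $2\exp(-\mathbf{C}N_i\delta^2h)$, which yields the $576$ and $1296$ constants after bookkeeping. The martingale part $\frac{1}{N_i(T-t_0)}\sum_jM^j(x)$ is unbounded. Each $M^j$ has quadratic variation at most $\|K\|_\infty^2(T-t_0)/h'^2$ and its conditional variance is $\mathcal{O}(\|K\|^2/h')$. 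We therefore intersect with the event $\{|M^j|\leq \log(N)/h' \text{ for all } j\}$. By the exponential martingale inequality its complement has probability at most $N_i\exp(-(T-t_0)\log^2N/(2\|K\|_\infty^2))$ after adjusting the normalization, which produces the middle term. On this event we apply Bernstein, or \cite{van1995exponential}, Lemma 2.1, for martingales. Here the range term $\log(N)\delta/h'$ is dominated by the variance term $1/h'$ because $\delta=o(\log^{-1}N)$. This gives $\exp(-\mathbf{C}N_i\delta^2h')$, with the constant $C''_{t_0,T}$.

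\textbf{Main obstacle.} The hardest step is this martingale term: the truncation must be compatible with uniformity in $x$, and it is exactly what forces the conditions $\delta\log N\to0$ and $\Delta_n=\mathcal{O}(N^{-2})$. Collecting the pieces and taking $\mathbf{C}$ as the minimum of the constants, so that $h\wedge h'$ appears in a single exponential, gives the stated bound.
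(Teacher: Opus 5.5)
Your ratio decomposition, the Bernstein bounds for $\widehat{\zeta}$ and for the drift part of $\widehat{(b\zeta)}$, and the truncation of the martingale part at level $\log(N)/h'$ (with an exponential martingale inequality for the complement) all follow the paper. The gap is in how you get uniformity in $x$ and the last term. You attribute the factor $6\|b_i^*\|_\infty/\delta_{i,N}$ to a covering of $\mathrm{Supp}(b_i^*)$ with mesh proportional to $\delta$. That bookkeeping cannot produce the stated inequality, for three reasons.

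First, $x\mapsto K_h(u-x)$ is only $C_Kh^{-2}$-Lipschitz. Passing from grid points to the whole support at precision $\asymp\delta$ therefore needs a mesh $\asymp\delta h^2$. For $\widehat{(b\zeta)}$ the Lipschitz constant also involves the unbounded quantity $\sum_k|X_{s_{k+1}}-X_{s_k}|$. So the number of points is at least of order $|I|/(\delta h^2)$, which involves the length of the support, not $\|b_i^*\|_\infty$.

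Second, and more importantly, a union bound multiplies \emph{every} term by this number. That includes $2\exp[-\mathbf{C}N_i\delta^2(h\wedge h')]$ and the truncation term, not just the last one.

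Third, for the last term the relevant deviation is of order $m$, so no factor $1/\delta$ would arise there at all.

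Carried out correctly, your plan gives a bound with a polynomial prefactor in front of all three terms. That is a different, weaker statement.

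In the paper the factor has an unrelated origin. The sup-norm bound contains the piece $\|b_i^*\|_\infty\mathds{1}_{\{\|\zeta_i^*-\widehat{\zeta}\|_\infty\geq m/2\}}$. Requiring it to exceed $\delta/3$ and applying Markov's inequality to the indicator gives $\frac{3\|b_i^*\|_\infty}{\delta}\mathbb{P}_i^{\otimes N}(\|\widehat{\zeta}-\zeta_i^*\|_\infty\geq m/2)$. Two-sided Bernstein at level $m/4$ then turns this into $\frac{6\|b_i^*\|_\infty}{\delta}\exp(-\mathbf{C}'N_ih)$.

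No covering is used. The supremum is replaced by the value at a point $x^*$ (resp. $y^*$) where the continuous supremum is attained, and Bernstein is applied there. This is not a valid substitute for a union bound: $x^*$ depends on the sample, so the summands at $x^*$ are neither independent nor centred. Your covering instinct is the sounder one, but its cost must then appear in the conclusion, or be absorbed into the exponents under extra growth conditions with worse constants.

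Two smaller points.
\begin{itemize}
\item With the threshold $\mathds{1}_{\widehat{\zeta}\geq m}$ and only $\zeta_i^*\geq m$ on $I$, the event $\{\widehat{\zeta}(x)<m\}$ is a deviation of size $\zeta_i^*(x)-m$, which need not be of order $m$. Your ``Bernstein at a level of order $m$'' therefore fails. Truncate at $m/2$, as the decomposition the paper borrows from \cite{marie2023nadaraya} implicitly does, so that $\{\widehat{\zeta}(x)<m/2\}\subset\{|\widehat{\zeta}(x)-\zeta_i^*(x)|\geq m/2\}$.
\item Your discretization error $\Delta_n^{1/2}/h=\mathcal{O}(N^{-1}h^{-1})$ is $o(\delta)$ only if $Nh\delta\to\infty$, which is not assumed. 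The paper gets an $h$-free $\mathcal{O}(\Delta_n)$ error from the Lipschitz continuity of $t\mapsto\Gamma_{X|Y=i}(0,t,x_0,x)$ on $[t_0,T]$.
\end{itemize}
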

Theorem~\ref{thm:exp-bound-drift} provides an exponential inequality that is essential to establish an upper bound of the excess risk of the plug-in classifier $\w{g}$ that is of the same order as $N^{-2\beta/(2\beta+1)}$ (up to a logarithmic factor). Note that the shape of Nadaraya-Watson estimators of drift coefficients $b_i^*, ~ i \in \mathcal{Y}$ has been crucial for establishing the result of the above theorem, being a ratio of empirical means of independent random variables with a lower bounded denominator by a strictly positive constant independent of $N$ due to the truncation of the estimator. That is why this estimator is viewed as more suitable compared to projection estimators widely studied in the literature (see, \textit{e.g.} \cite{denis2021ridge}, \cite{comte2020nonparametric}). 

\begin{remark}
 For practical situations, there exist numerical methods for the selection of bandwidths $h_{i,N}$ and $h_{i,N}^{\prime}$, and the hyper-parameter $m$ from the learning sample $\mathcal{Z}_N$. Focusing on the selection of bandwidths assuming that $h_{i,N} = h_{i,N}^{\prime}$, we have the Leave-one-out Cross Validation method described in \cite{marie2023nadaraya} and consisting in choosing $\w{h}_{i,N}$ so that
 \begin{equation*}
     \w{h}_{i,N} \in \underset{h \in \mathcal{H}}{\arg\min}{~\mathrm{CV}(h)},
 \end{equation*}
 where $\mathcal{H}$ is a finite subset of the interval $(0,1)$ (for example $\mathcal{H} = \{k/100, ~ k \in [\![1, 20]\!]\}$), and
$$\mathrm{CV}(h) := \sum_{i=1}^{N}\left[\sum_{j=0}^{n-1}\widehat{b}_{N,n,h}^{-i}(X_{t_j}^i)^2(t_{j+1} - t_j) - 2\sum_{j=0}^{n-1}\widehat{b}_{N,n,h}^{-i}(X_{t_j}^i)(X_{t_{j+1}}^i - X_{t_j}^i) \right],$$
with
$$\widehat{b}_{N,n,h}^{-i}(x) := \sum_{k \in [\![1,N]\!] \setminus \{i\}}\sum_{j=0}^{n-1}w_j^k(x)\left(X_{t_{j+1}}^i - X_{t_j}^i\right), ~~ i \in [\![1, N]\!],$$
and 
$$w_j^k(x) := \dfrac{K_h(X_{t_j}^k - x)}{\sum_{i=1}^{N}\sum_{\ell = 0}^{n-1}K_h(X_{t_{\ell}}^i - x)(t_{\ell+1} - t_{\ell})}, ~~ (k,j) \in [\![1, N]\!] \times [\![0,n-1]\!].$$
For the hyperparameter $m$, one can use $\w{m}_{N} = \min\{\w{\zeta}_{0,N,\w{h}_N}(x) \land \w{\zeta}_{1,N,\w{h}_N}(x), ~ x \in I\}$ with, for example, $I = [-1, 1]$ (see \cite{marie2023nadaraya}, \cite{comte2017nonparametric}). 
\end{remark}

The next section is devoted to the establishment of the upper bound on the excess risk of the plug-in classifier under low-noise conditions.

\subsection{Upper bound on the worst excess risk of the plug-in classifier}
\label{subsec:upper-bound}

Once the low-noise condition is established together with the exponential inequality provided by Theorem~\ref{thm:exp-bound-drift}, we derive below an upper bound on the worst excess risk of the plug-in-type classifier $\w{g}$.
\begin{theo}\label{thm:upper-bound}
 Suppose that $\Delta_n = \mathcal{O}(N^{-2})$, $N \rightarrow \infty$ and for each $i \in \mathcal{Y}$, $h_{i,N} = h_{i,N}^{\prime} = N^{-1/(2\beta+1)}$. Under Assumptions~\ref{ass:Reg}, \ref{ass:reg-kernel} and \ref{ass:prop-kernel}, the following holds:
    \begin{equation*}
        \mathbb{E}_{\mathbb{P}^{\otimes N}}\left[R(g_{\w{\bf f}}) - R(g_{\bf f^*})\right] \leq C\log^4(N)N^{-2\beta/(2\beta+1)},
    \end{equation*}
    where $C>0$ is a constant.
\end{theo}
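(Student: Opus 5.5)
The plan is the classical plug-in argument of \cite{audibert2007fast}, combining the margin condition of Proposition~\ref{prop:margin-condition} with the exponential inequality of Theorem~\ref{thm:exp-bound-drift}.

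\textbf{Step 1 (margin decomposition).} Start from the standard identity
\[
R(g_{\w{\bf f}}) - R(g_{\bf f^*}) = \mathbb{E}_X\bigl[|2\Phi_{\bf f^*}(X)-1|\mathds{1}_{g_{\w{\bf f}}(X)\neq g_{\bf f^*}(X)}\bigr].
\]
On the event $\{g_{\w{\bf f}}\neq g_{\bf f^*}\}$ we have $|\Phi_{\bf f^*}-1/2|\le|\Phi_{\w{\bf f}}-\Phi_{\bf f^*}|$. Split according to whether $|\Phi_{\bf f^*}(X)-1/2|\le\varepsilon$. Then Proposition~\ref{prop:margin-condition} gives
\[
\mathbb{E}_{\mathbb{P}^{\otimes N}}\bigl[R(g_{\w{\bf f}})-R(g_{\bf f^*})\bigr]\le 2C\varepsilon^2+\mathbb{E}_{\mathbb{P}^{\otimes N}}\bigl[\mathbb{P}_X(|\Phi_{\w{\bf f}}(X)-\Phi_{\bf f^*}(X)|\ge\varepsilon)\bigr].
\]
This is \eqref{eq:upbound}. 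Choose $\varepsilon=\varepsilon_N=\log^2(N)N^{-\beta/(2\beta+1)}$, so that $2C\varepsilon_N^2=2C\log^4(N)N^{-2\beta/(2\beta+1)}$. The task is then to show the second term is $o(N^{-2\beta/(2\beta+1)})$.

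\textbf{Step 2 (reduction to drift errors).} The logistic map is $1/4$-Lipschitz in the log-odds. Hence $|\Phi_{\w{\bf f}}-\Phi_{\bf f^*}|\le\frac14\bigl(|\log(\w p_1/\w p_0)-\log(p_1^*/p_0^*)|+\sum_i|\bar F^i_{\w{\bf b}}(\bar X)-F^i_{\bf b^*}(X)|\bigr)$. Decompose each $\bar F^i_{\w{\bf b}}(\bar X)-F^i_{\bf b^*}(X)$ into three parts.
\begin{itemize}
\item[(a)] A discretization error $\bar F^i_{\bf b^*}-F^i_{\bf b^*}$. Since $b_i^*$ is Lipschitz and $\Delta_n=O(N^{-2})$, its $L^2$ norm is $O(\Delta_n^{1/2})=O(N^{-1})$. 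A Markov/BDG bound gives probability $O(N^{-2}\varepsilon_N^{-2})$, which is negligible.
\item[(b)] The stochastic part $\sum_k(\w b_i-b_i^*)(X_{t_k})(W_{t_{k+1}}-W_{t_k})$, a martingale term.
\item[(c)] The drift parts $\sum_k(\w b_i-b_i^*)(X_{t_k})b_{Y}^*(X_t)\,dt$ and $\frac12\sum_k(\w b_i^2-b_i^{*2})(X_{t_k})\Delta_n$. These are bounded by $T\|\w b_i-b_i^*\|_\infty(\|b^*\|_\infty+\|\w b_i\|_\infty+\|b_i^*\|_\infty)$. Truncation of the estimator at level $m$ keeps $\|\w b_i\|_\infty$ bounded on the good event.
\end{itemize}
Work on the event $\mathcal{E}_N=\{\|\w b_i-b_i^*\|_\infty\le\delta_N,\ i\in\mathcal{Y}\}\cap\{|\w p_i-p_i^*|\le\delta_N\}$ with $\delta_N=\varepsilon_N/\log(N)=\log(N)N^{-\beta/(2\beta+1)}$.
\begin{itemize}
\item The term (c) is then $O(\delta_N)=o(\varepsilon_N)$.
\item Conditionally on the sample, the martingale (b) has bracket at most $T\delta_N^2$. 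An exponential martingale (Bernstein) inequality gives $\mathbb{P}_X(|(b)|\ge\varepsilon_N/8)\le2\exp(-c\varepsilon_N^2/\delta_N^2)=2\exp(-c\log^2N)$, which is superpolynomially small.
\item The $\w p$ term is handled by Hoeffding together with $p_i^*\in(0,1)$.
\end{itemize}

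\textbf{Step 3 (probability of the bad event).} Bound $\mathbb{P}^{\otimes N}(\mathcal{E}_N^c)$ with Hoeffding for $\w p$ and with Theorem~\ref{thm:exp-bound-drift} conditionally on the labels. The hypotheses must be checked with $h=N^{-1/(2\beta+1)}$ and $\delta_N$ as above:
\begin{itemize}
\item $\delta_N\to0$ and $N\delta_N^2\to\infty$;
\item $h^\beta=N^{-\beta/(2\beta+1)}=o(\delta_N)$ thanks to the $\log N$ factor;
\item $\delta_N=o(\log^{-1}N)$.
\end{itemize}
Integrate over $N_i$, splitting on $\{N_i\ge Np_i^*/2\}$; the complement has probability $e^{-cN}$ by Chernoff. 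Each term of the bound is then superpolynomially small:
\begin{itemize}
\item $\exp(-\mathbf{C}N_i\delta_N^2h)=\exp(-c\log^2(N)N^{1/(2\beta+1)}\cdot N^{(1-2\beta)/(2\beta+1)}\cdots)$. Precisely, $N\delta_N^2h=\log^2(N)N^{1-(2\beta+1)/(2\beta+1)}=\log^2N$, so this term is $\exp(-c\log^2N)=o(N^{-k})$ for all $k$.
\item $N_i\exp(-c\log^2N)$ is also $o(N^{-k})$.
\item $\delta_N^{-1}\exp(-\mathbf{C}'N_ih)$ is exponentially small.
\end{itemize}
Summing gives $\mathbb{E}_{\mathbb{P}^{\otimes N}}[\mathbb{P}_X(|\Phi_{\w{\bf f}}-\Phi_{\bf f^*}|\ge\varepsilon_N)]\le\mathbb{P}^{\otimes N}(\mathcal{E}_N^c)+o(N^{-1})=o(N^{-2\beta/(2\beta+1)})$, which concludes the proof.

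\textbf{Main obstacle.} The delicate point is the calibration where $N\delta_N^2h$ is only of order $\log^2N$. The logarithmic powers in $\delta_N$ and $\varepsilon_N$ must be chosen so that simultaneously $h^\beta=o(\delta_N)$, the exponential terms are $\exp(-c\log^2N)$, and the martingale term (b) is superpolynomially small. A further difficulty is controlling the unbounded increments in (b) and (a), which requires conditioning on the sample and using the fact that the truncated estimator is bounded on $\mathcal{E}_N$.
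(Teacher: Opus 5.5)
Your proposal is correct and follows essentially the paper's route: the margin decomposition with $\varepsilon=\log^2(N)N^{-\beta/(2\beta+1)}$, then the reduction of $|\Phi_{\w{\bf f}}-\Phi_{\bf f^*}|$ to four pieces (the $\w p$ errors, a discretization term handled by Markov, a martingale term handled by an exponential inequality, and drift terms of order $\|\w b_i-b_i^*\|_\infty$), and finally Theorem~\ref{thm:exp-bound-drift} integrated over the binomial $N_i$. The differences are cosmetic: you use the $1/4$-Lipschitz logistic bound instead of the estimate cited from \cite{denis2024nonparametric}, $\delta_N=\log(N)N^{-\beta/(2\beta+1)}$ instead of $\tfrac12\log^{3/2}(N)N^{-\beta/(2\beta+1)}$, and a Chernoff split on $N_i$ instead of the explicit binomial moment generating function; also, boundedness of $\w b_i$ on $\mathcal{E}_N$ follows directly from $\|\w b_i-b_i^*\|_\infty\le\delta_N$, not from the truncation at level $m$, which only controls the denominator.
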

The result of Theorem~\ref{thm:upper-bound} shows that under the low-noise condition, it is indeed possible to construct a more efficient classification procedure. 
In the context of supervised classification for trajectories generated by stochastic differential equations, the above result extends to diffusion processes with space-dependent coefficients, the investigation carried out in \cite{gadat2020optimal} on binary classification for diffusion paths generated by a Gaussian process, solution of the white noise model whose drift depends on the label $Y$, resulting, under low-noise condition, in an optimal rate of order $N^{-2s/(2s+1)}$ over a Sobolev space of smoothness parameter $s > 0$. As we already know, Diffusion models with space-dependent coefficients bring additional non trivial complications compared to the white noise model. The main difficulties related to these models are generally related to the study of transition densities and their estimates, the use of standard norms and the corresponding scalar products, or the existence of smooth probability densities. In this paper, we were able to use standard norms thanks to the compact support of the drift coefficients of the studied mixture model, which is not obvious when considering non-compactly supported drift and diffusion coefficients. In the context of supervised classification for multivariate data of dimension $d$, \cite{audibert2007fast} established, under the strong density assumption, an optimal convergence rate of order $N^{-(1+\alpha)\beta/(2\beta+d)}$ over the H\"older class of smoothness parameter $\beta \geq 1$, where $\alpha > 0$ comes from the Margin Assumption stated as follows:
\begin{equation*}
    P_X\left(0 < \left|\eta(X) - \dfrac{1}{2}\right| \leq t\right) \leq Ct^{\alpha} ~~~ \forall~ t > 0,
\end{equation*}
where $\eta(X) = P(Y=1|X)$ is the regression function. This rate is of order $N^{-2\beta/(2\beta+1)}$ for $\alpha = 1$ and $d = 1$. The result of Proposition~\ref{prop:margin-condition} corresponds to case $\alpha = 1$. Then, the rate provided by Theorem~\ref{thm:upper-bound}
is of the same order as theirs for $d=1$, and faster for $d > 1$.\\
In the next section, we focus on the study of the lower bound on the worst excess risk of the plug-in classifier. 
\subsection{Lower bound on the excess risk of the plug-in classifier}
\label{subsec:lower-bound}
 
We show that under the low-noise condition, the convergence rate of order $N^{-2\beta/(2\beta+1)}$ cannot be improved. To this end, we derive the following result.
\begin{theo}\label{thm:lower-bound}
   There exists a constant $c>0$ such that
\begin{equation*}
    \underset{\w{\bf f}}{\inf}\underset{{\bf f^*} \in {\bf F}(\beta, R)}{\sup}\mathbb{E}_{\mathbb{P}^{\otimes N}}\left[R(g_{\w{\bf f}}) - R(g_{\bf f^*})\right] \geq cN^{-2\beta/(2\beta+1)}.
\end{equation*}
\end{theo}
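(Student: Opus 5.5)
We will prove the lower bound by Assouad's lemma in the form adapted to classification in \cite{Audibert2004ClassificationUP}. The plan is to exhibit a finite family of hypotheses $\{{\bf f}_\sigma = ({\bf b}_\sigma, {\bf p}),\ \sigma \in \{-1,1\}^m\} \subset {\bf F}(\beta,R)$, indexed by a hypercube, and then to check the two quantitative conditions. The first is a separation condition on $|\Phi_{{\bf f}_\sigma}(X)-1/2|$ over cells of a partition of $\mathcal{X}$. The second is a closeness condition on the laws of the learning sample for neighbouring $\sigma,\sigma'$.

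\textbf{Construction.} We fix ${\bf p}=(1/2,1/2)$ and a reference drift $b_0 \in \Sigma(\beta,R/2)$ supported in $I$, with $b_0(x_0)\neq 0$ and $\mu(\{b_0 \neq 0\})>0$. Let $h=N^{-1/(2\beta+1)}$ and $m \asymp h^{-1}$. We pick disjoint intervals $J_1,\dots,J_m \subset \mathrm{Int}(I)$ of length $h$, kept away from $x_0$, and a smooth bump $\varphi$ supported in $[0,1]$. We set
\[
b_{1,\sigma} = b_0 + \sum_{k=1}^m \sigma_k L h^\beta \varphi\big((\cdot - a_k)/h\big),
\]
with $a_k$ the left end of $J_k$ and $L$ small. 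Then $b_{1,\sigma}\in\Sigma(\beta,R)$, and ${\bf b}_\sigma=(b_0,b_{1,\sigma})$ stays in ${\bf B}(\beta,R)$ because $b_{1,\sigma}(x_0)=b_0(x_0)$ is impossible only if we add a fixed offset. More precisely, we take $b_1 = b_0 + \psi + \sum_k(\dots)$, where $\psi$ is a fixed smooth bump at $x_0$ with $\psi(x_0)\neq 0$. The bumps then perturb the log-likelihood ratio
\[
F^1-F^0 = \int (b_{1,\sigma}-b_0)(X_t)\,dX_t - \tfrac12\int (b_{1,\sigma}^2-b_0^2)(X_t)\,dt .
\]
We partition $\mathcal{X}$ into cells $\mathcal{X}_k$, $k=1,\dots,m$, each of probability $w \asymp h$, plus a remainder cell. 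The intended choice is to let $\mathcal{X}_k$ be the set of paths whose occupation of $\cup_l J_l$ is concentrated in $J_k$ and whose fixed part $\int\psi$ brings $\Phi$ to within $O(h^\beta)$ of $1/2$. On $\mathcal{X}_k$ the sign of $\Phi_{{\bf f}_\sigma}-1/2$ is then governed by $\sigma_k$, with $|\Phi_{{\bf f}_\sigma}-1/2|\asymp h^\beta$. The equivalence $\mathbb{P}_X\circ X^{-1}\sim\mathcal{W}$ and the explicit transition density of \cite{dacunha1986estimation} are used to show that such cells have positive probability, of order exactly $h$, and non-atomic marginals. The margin condition with $\alpha=1$ is automatically compatible with this construction: the mass at distance $\asymp h^\beta$ from $1/2$ is $mw \asymp 1$ times a factor that we tune to $h^\beta$. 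We therefore in fact take $w \asymp h^{\beta}\cdot h$ per cell, so that $mw \asymp h^\beta$ matches $C\varepsilon$ at $\varepsilon=h^\beta$.

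\textbf{Assouad bound.} For $\sigma,\sigma'$ differing in coordinate $k$, the Kullback--Leibler divergence between the sample laws is computed by Girsanov. Since the labels have the same law, a class-1 path contributes
\[
\tfrac12\,\mathbb{E}\int_0^T (b_{1,\sigma}-b_{1,\sigma'})^2(X_t)\,dt \lesssim h^{2\beta}\int_{J_k}\zeta(x)\,dx \lesssim h^{2\beta+1}.
\]
Hence the divergence over $N$ paths is $\lesssim N h^{2\beta+1}=O(1)$, which keeps the Hellinger affinity bounded below. Assouad then gives an excess risk of order
\[
m\, w\, h^\beta \asymp h^{-1}\cdot h^{\beta+1}\cdot h^\beta = h^{2\beta} = N^{-2\beta/(2\beta+1)}.
\]

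\textbf{Main obstacle.} The hard step is the construction of the partition cells in path space. They must have equal probability $w$, the sign of $\Phi_{{\bf f}_\sigma}-1/2$ must be constant and driven only by $\sigma_k$ on $\mathcal{X}_k$, and the margin bound $|\Phi-1/2|\gtrsim h^\beta$ must hold there. The difficulty is that $\Phi$ depends on the whole path through a stochastic integral, not on a finite-dimensional point. I would first try to define the cells through the value $X_{t_1}$ at a fixed time combined with the event that the path stays near that value afterwards, using the explicit transition density to equalise probabilities. If this fails, I would instead condition on the local time at the bump locations.
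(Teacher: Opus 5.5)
Your information bound is sound: by Girsanov, flipping one coordinate costs $\lesssim L^2h^{2\beta}\int_{J_k}\zeta\lesssim h^{2\beta+1}$ of Kullback--Leibler divergence per labelled path, hence $O(1)$ over the sample. The gap is on the separation side, and with your bumps the cells you describe cannot exist. Write $\varphi_l=\varphi((\cdot-a_l)/h)$, $F_\sigma=F^1-F^0$ under ${\bf f}_\sigma$, and $\sigma^{(k)}$ for $\sigma$ with its $k$-th coordinate flipped. If $\psi$ and the $\varphi_l$ have disjoint supports, then under either label
\[
F_\sigma=\int_0^T\psi(X_t)\,dW_t+\sum_{l=1}^m\sigma_lLh^\beta\int_0^T\varphi_l(X_t)\,dW_t\pm\frac12\int_0^T\Big(\psi^2+L^2h^{2\beta}\sum_{l=1}^m\varphi_l^2\Big)(X_t)\,dt .
\]
Hence $F_\sigma-F_{\sigma^{(k)}}=2\sigma_kLh^\beta\int_0^T\varphi_k(X_t)\,dW_t$. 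Its quadratic variation is at most $Ch^{2\beta}\int_{J_k}\ell^x_T\,dx\le Ch^{2\beta+1}\sup_x\ell^x_T$, where $\ell$ is the local time of $X$. So a single flip moves $F_\sigma$ by $O(h^{\beta+1/2})$ up to logarithms, except on an event of super-polynomially small probability. On a set of polynomial probability, such as your $\mathcal{X}_k$ with $w\asymp h^{\beta+1}$, the signs of $F_\sigma$ and $F_{\sigma^{(k)}}$ can therefore differ only where $|\Phi_{{\bf f}_\sigma}-1/2|$ is $O(h^{\beta+1/2})$ up to logarithms, which is $\ll h^\beta$, apart from a negligible fraction. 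The margin $h^\beta$ on cells governed by $\sigma_k$ is impossible, and the count $m\,w\,h^\beta$ has nothing under it.

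Lowering the margin to $h^{\beta+1/2}$ does not rescue the cells. The partition may not depend on $\sigma$, so choosing $\sigma_{-k}$ adversarially shows you would need $\sum_{l\neq k}|\int_0^T\varphi_l(X_t)\,dW_t|<|\int_0^T\varphi_k(X_t)\,dW_t|$ on $\mathcal{X}_k$. But the $J_l$ are packed at spacing $O(h)$ in the compact $I$, and a path reaching $J_k$ crosses its neighbours. By spatial continuity of local time it spends comparable time in many of them, so the other bumps contribute a term of order $h^\beta$ in aggregate. Both of your fallbacks (confining the path near a point of $J_k$, or conditioning on local times at the bumps) hit the same wall.

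Using instead the $\sigma$-dependent sets $\{g_{{\bf f}_\sigma}\neq g_{{\bf f}_{\sigma^{(k)}}}\}$ also fails. A path with $|F_\sigma|\lesssim h^{\beta+1/2}$ lies in a positive fraction of the $\asymp 1/h$ such sets for the bumps it visits, so summing over $k$ overcounts by a factor $\asymp 1/h$ and only $N^{-1}$ survives. Separately, Audibert's Lemma 5.1 needs a $\sigma$-free $X$-marginal and $\eta_\sigma=(1+\sigma_j\xi)/2$ with $\xi$ free of $\sigma$. In your mixture the law of $X$ depends on $b_{1,\sigma}$, so only a general Assouad lemma is available.

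What is additive in your construction is not the loss over cells but second moments. The integrals $\int_0^T\varphi_l(X_t)\,dW_t$ are orthogonal for disjoint supports, so $\mathbb{E}(F_\sigma-F_\tau)^2\asymp h^{2\beta+1}d_H(\sigma,\tau)$, with $d_H$ the Hamming distance. A route that exploits this drops cells altogether:
\begin{enumerate}
\item Take $d(g,g')=\mathbb{P}(g(X)\neq g'(X))$ under a fixed reference law of $X$; your hypotheses have mutually close $X$-marginals.
\item The margin property of Proposition~\ref{prop:margin-condition}, with constants uniform over your hypotheses, gives $R(g)-R(g_{{\bf f}_\sigma})\ge c\,d(g,g_{{\bf f}_\sigma})^2$.
\item The triangle inequality lets you replace any $\widehat g$ by the nearest $g_{{\bf f}_{\widehat\tau}}$.
\item If you prove $d(g_{{\bf f}_\sigma},g_{{\bf f}_\tau})\gtrsim h^\beta\sqrt{h\,d_H(\sigma,\tau)}$, squaring gives a loss $\gtrsim h^{2\beta+1}d_H(\widehat\tau,\sigma)$.
\item Assouad with your Kullback--Leibler bound then gives $m\,h^{2\beta+1}\asymp N^{-2\beta/(2\beta+1)}$.
\end{enumerate}
The bound in the fourth step needs anti-concentration of $F_\sigma-F_\tau$ together with a lower bound on the density of $F_\sigma$ near $0$, uniformly in $\sigma$. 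That density statement is the real missing lemma.

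The paper offers no template for it, because it never localizes. It takes $b_0=0$ and $b_1=\kappa D^{-\beta}+\sum_k\theta_k\phi_k$, so the whole drift difference is of size $D^{-\beta}$. It carves equiprobable cells inside $\{\widetilde\xi>0\}$ and $\{\widetilde\xi\le 0\}$ through finite-dimensional densities (Lemma~\ref{lm:partition}), and applies Audibert's lemma with $\mathfrak b,\mathfrak b'\asymp D^{-\beta}$ and $mw=D^{-\beta}$. It treats sign flips of $\Phi-1/2$ on those cells, with a fixed $X$-marginal, as the hypercube, without checking that drifts in ${\bf F}(\beta,R)$ realise those flips.
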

The above result holds for any time-homogeneous diffusion model, particularly those whose drift coefficients are non-compactly supported. Moreover, this result is not specific to plug-in classifiers, since it holds for any binary supervised classification procedure for time-homogeneous diffusion paths. The result of Theorem\ref{thm:lower-bound} is derived following the principle of Assouad's Lemma adapted to the classification problem established in \cite{Audibert2004ClassificationUP}. The lower bound of the excess risk of the classification procedure is established from a family of possible distributions of the random pair $(X,Y)$ where $X$ is the characteristic and $Y \in \{0,1\}$ the label, the probability distribution of $(X,Y)$ being assumed to be unknown. Then, this method does not rely on a particular classification principle. Note that the lower bound obtained is not of the same order as the upper bound provided by Theorem~\ref{thm:upper-bound}. The extra-factor $\log^4(N)$ on the upper bound of the excess risk of $\w{g}$ is mainly due to the nature of the considered diffusion model together with the nonparametric estimators of the drift functions as described in the previous sections. 

\section{Conclusion}
\label{sec:conclusion}

This paper has tackled the study of minimax convergence rates of a classification procedure of the plug-in type for trajectories generated by time-homogeneous Stochastic Differential Equations with space-dependent coefficients. This problem is first highlighted in \cite{audibert2007fast} in the context of supervised classification of multivariate data, resulting in the establishment of optimal rates in the H\"older class.  The classification model considered in this paper is distinguished by its complexity. In fact, the classification model is based on a mixture of Stochastic Differential Equations with space-dependent coefficients, inducing challenges such as the existence of smooth density functions, the construction of best suited nonparametric estimators of the drift coefficients or the study of exponential inequalities. These difficulties lead to a strong assumption on the support of the drift coefficients and required the diffusion coefficient $\sigma$ to be known. In fact, we do not find in the literature a nonparametric estimator of $\sigma^2$ from i.i.d. SDE paths that is considered best suited for the study carried out in this paper. \\
The immediate perspectives for future investigations are the extension of the present study to a mixture of diffusion processes whose space-dependent drift and diffusion coefficients are non-compactly supported and unknown. This new extension will require the construction of nonparametric estimators of the  drift and diffusion coefficients that are adapted to the study of an upper bound on the excess risk under low-noise conditions, implying the establishment of an exponential inequality. In fact, projection estimators of the drift function from i.i.d SDE paths on the whole line $\mathbb{R}$ proposed, for instance, in \cite{comte2020nonparametric}, or the projection estimators of the square of the diffusion coefficient from i.i.d SDE paths studied in \cite{ella2024nonparametric} and \cite{ella2025minimax} seem not to be well suited to solve this problem. As a result, one should consider nonparametric estimators of the kNN-type or the kernel-type such as the Nadaraya-Watson estimator from repeated observations of a time-homogeneous diffusion process (see \cite{marie2023nadaraya}). We can also think of extending the study to a mixture of time-inhomogeneous diffusion processes. However, this new model brings an additional complication as the coefficients of the diffusion process depend both on the space and the time. To be more precise, one can notice that the construction of the Nadaraya-Watson estimator $\w{b}_{i,N,h}$ of the drift coefficient $b_i^*$ was built from portions $X^{t_0, j} = (X_t^j)_{t_0 \leq t \leq T}, ~ j \in [\![1, N]\!]$ of the $N$ independent copies of the solution $X$ of model~\eqref{eq:classif-model}. The estimators considered would no longer be suitable if the classification model was a mixture of time-inhomogeneous diffusion processes.

\section{Proofs}
\label{sec:proofs}

\subsection{Proof of Theorem~\ref{thm:exp-bound-drift}}

The proof of Theorem~\ref{thm:exp-bound-drift} relies on the following lemmas.

\begin{lemme}\label{lm:bias-1}
Under Assumptions~\ref{ass:Reg}, \ref{ass:reg-kernel} and \ref{ass:prop-kernel}, there exists a constant $C>0$ depending on $\beta, t_0, T$ and $K$ such that for all $j \in [\![1,N]\!]$ and for all $k \in [\![0,n-1]\!]$,
    \begin{equation*}
        \left|\mathbb{E}_{\mathbb{P}_i^{\otimes N}}\left[K_{h_{i,N}}(X_{t_k}^{ji} - x)\right] - \Gamma_{X|Y=i}(0,t_k,x_0,x)\right| \leq Ch_{i,N}^{\beta}, ~ i \in \mathcal{Y}, ~ x \in \mathbb{R}.
    \end{equation*}
\end{lemme}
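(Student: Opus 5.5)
Conditionally on the labels, $X^{ji}$ for $j\in\mathcal{J}_i$ is a copy of the solution of \eqref{eq:classif-model} with drift $b_i^*$. Hence
\[
\mathbb{E}_{\mathbb{P}_i^{\otimes N}}\bigl[K_{h}(X^{ji}_{t_k}-x)\bigr]=\int_{\mathbb{R}}K_h(y-x)\Gamma_{X|Y=i}(0,t_k,x_0,y)\,dy=\int_{\mathbb{R}}K(z)\,\Gamma_{X|Y=i}(0,t_k,x_0,x+hz)\,dz ,
\]
where $h=h_{i,N}$. Since $\int K=1$, the difference to bound is
\[
\int K(z)\bigl[\Gamma_{X|Y=i}(0,t_k,x_0,x+hz)-\Gamma_{X|Y=i}(0,t_k,x_0,x)\bigr]dz .
\]
This is the classical kernel bias computation, so the plan is the standard Taylor argument. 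I will expand $y\mapsto\Gamma_{X|Y=i}(0,t_k,x_0,y)$ to order $\ell=\lfloor\beta\rfloor$ around $x$ with integral remainder. By Assumption~\ref{ass:prop-kernel} with $\gamma\geq\lfloor\beta\rfloor$, all polynomial terms of degree $1,\dots,\ell$ integrate to zero. The remainder is then controlled by
\[
\frac{1}{\ell!}\int|K(z)|\,|hz|^{\ell}\,\bigl|\partial_y^{\ell}\Gamma(x+\tau hz)-\partial_y^{\ell}\Gamma(x)\bigr|\,dz\le \frac{L}{\ell!}\,h^{\beta}\int|z|^{\beta}|K(z)|\,dz .
\]
Here $L$ is a Hölder constant of order $\beta-\ell$ for $\partial_y^\ell\Gamma$. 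Integrability of $|z|^\beta K$ follows from Assumption~\ref{ass:prop-kernel}, since $\beta\le\gamma+1$.

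The whole difficulty is therefore to show that $y\mapsto\Gamma_{X|Y=i}(0,t,x_0,y)$ belongs to a Hölder class $\Sigma(\beta,L)$, with $L$ depending only on $\beta,t_0,T$ and the drift class, uniformly in $t\in[t_0,T]$. For $t_k<t_0$ the density near $t=0$ concentrates and the bound degenerates, so I would either restrict to the relevant indices $k\ge k_0$ or accept a constant depending on $t_0$, as in the statement.

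To get this regularity I would use the explicit representation of the transition density for unit diffusion coefficient (Dacunha-Castelle–Florens-Zmirou, cited in the introduction). With $B_i(y)=\int_0^y b_i^*$,
\[
\Gamma_{X|Y=i}(0,t,x_0,y)=\frac{1}{\sqrt{2\pi t}}\exp\Bigl(-\frac{(y-x_0)^2}{2t}+B_i(y)-B_i(x_0)\Bigr)\,\mathbb{E}\Bigl[\exp\Bigl(-\int_0^t g_i(\text{bridge}_s)\,ds\Bigr)\Bigr],
\]
where $g_i=(b_i^{*2}+b_i^{*\prime})/2$ and the bridge is the Brownian bridge from $x_0$ to $y$ in time $t$.

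The Gaussian factor is $C^\infty$ in $y$, with derivatives bounded uniformly for $t\ge t_0$. The factor $e^{B_i}$ has smoothness $\beta+1$, and $B_i$ is bounded because $b_i^*$ is compactly supported. For the bridge expectation, I write the bridge as $x_0+(y-x_0)s/t+\text{(standard bridge)}_s$, so $y$ enters linearly. Differentiating under the expectation up to order $\ell$ requires $g_i\in C^{\beta-1}$, which holds since $b_i^*\in\Sigma(\beta,R)$ and $\beta\ge1$.

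This loses one derivative: it gives $\beta-1$ rather than $\beta$. This is the main obstacle. To recover the missing derivative I would try one of two routes. The first is to use $b_i^{*\prime}$ only through an integration by parts in $s$, via Itô's formula along the bridge, or to use the averaging over the Gaussian bridge marginal, which regularizes $g_i(\text{bridge}_s)$ in $y$ for $s$ away from $t$. The second is to instead invoke parabolic Schauder estimates on the forward Kolmogorov equation $\partial_t\Gamma=\tfrac12\partial_{yy}\Gamma-\partial_y(b_i^*\Gamma)$ with $b_i^*\in C^{\beta}$, which give $\Gamma(t,\cdot)\in C^{\beta+1}$ for $t\ge t_0>0$ with constants uniform over the Hölder ball. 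Finally, uniformity in $j$ is automatic because all paths in the subsample are identically distributed, and uniformity in $x$ comes from the sup-norm Hölder bound.
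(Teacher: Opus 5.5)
Your plan is correct and follows the paper's proof. Both use the substitution $y=x+h_{i,N}z$, a Taylor expansion of $y\mapsto\Gamma_{X|Y=i}(0,t,x_0,y)$ whose polynomial terms are removed by the vanishing moments of $K$, and a bound on the spatial derivatives of the transition density that is uniform in $t\in[t_0,T]$. The paper's proof also only handles $s_k^n\geq t_0$, i.e.\ $k\geq k_0$, which is the restriction you correctly identify as necessary.

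The differences are minor. The paper expands one order further and controls the Lagrange remainder by $\sup|\partial_y^{\lfloor\beta\rfloor+1}\Gamma_{X|Y=i}|\le C t_0^{-\alpha_{\lfloor\beta\rfloor}}$, then uses $h^{\lfloor\beta\rfloor+1}\le h^{\beta}$. It simply asserts that derivative bound from the Dacunha-Castelle--Florens-Zmirou formula. You only need $\Gamma_{X|Y=i}(0,t,x_0,\cdot)\in\Sigma(\beta,L)$, and you rightly point out that naive differentiation of the bridge representation loses a derivative. An argument such as your Schauder estimate is what actually justifies this step in either version.
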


\begin{lemme}\label{lm:bias-2}
Under Assumptions~\ref{ass:Reg}, \ref{ass:reg-kernel} and \ref{ass:prop-kernel}, there exists a constant $C>0$ depending on $\beta, t_0, T, \bf{b}^*$ and $K$ such that for all $j \in [\![1,N]\!]$,
    \begin{equation*}
        \left|\sum_{k=0}^{n-1}\mathbb{E}_{\mathbb{P}_i^{\otimes N}}\left[K_{h_{i,N}^{\prime}}(X_{t_k}^{ji} - x)\int_{t_k}^{t_{k+1}}b_i^*(X_{s}^{ji})ds\right] - (b^*\zeta)_{i,\Delta_n}(x)\right| \leq C\left(h_{i,N}^{\prime\beta} + \sqrt{\Delta_n}\right), ~~ i \in \mathcal{Y}, ~ x \in \mathbb{R}.
    \end{equation*}
\end{lemme}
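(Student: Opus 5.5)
The plan is to split the integral over each time step into a "frozen" part and a fluctuation part. I read $(b^*\zeta)_{i,\Delta_n}(x)$ as the Riemann-sum counterpart $\sum_{k}(t_{k+1}-t_k)\,b_i^*(x)\,\Gamma_{X|Y=i}(0,t_k,x_0,x)$; its definition is not visible at this point, so this is an assumption. For each $k$ write
\begin{equation*}
\int_{t_k}^{t_{k+1}}b_i^*(X_s^{ji})ds=\Delta_n b_i^*(X_{t_k}^{ji})+\int_{t_k}^{t_{k+1}}\bigl(b_i^*(X_s^{ji})-b_i^*(X_{t_k}^{ji})\bigr)ds .
\end{equation*}
The first piece will produce the bias term of order $h_{i,N}^{\prime\beta}$, and the second the discretisation term of order $\sqrt{\Delta_n}$. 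The sum runs over $k\geq k_0$, i.e. over $t_k\geq t_0$ as in the estimator; this is what makes all transition-density bounds uniform in time.

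\emph{Step 1: the frozen part.} For fixed $k$, the substitution $y=x+h'z$ gives
\begin{equation*}
\mathbb{E}_{\mathbb{P}_i^{\otimes N}}\bigl[K_{h'}(X_{t_k}^{ji}-x)b_i^*(X_{t_k}^{ji})\bigr]=\int_{\mathbb{R}}K(z)\,(b_i^*\Gamma_{X|Y=i}(0,t_k,x_0,\cdot))(x+h'z)\,dz .
\end{equation*}
I will Taylor-expand $y\mapsto b_i^*(y)\Gamma_{X|Y=i}(0,t_k,x_0,y)$ around $x$ up to order $\lfloor\beta\rfloor$. The moment conditions of Assumption~\ref{ass:prop-kernel} (with $\gamma=\lfloor\beta\rfloor+1$) cancel all intermediate terms. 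The remainder is then bounded by $C h'^{\beta}\int|z|^{\beta}|K(z)|dz$, where $C$ is the $\Sigma(\beta,\cdot)$-seminorm of the product.

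This seminorm is controlled by the Leibniz rule. First, $b_i^*\in\Sigma(\beta,R)$ with compact support, so all its derivatives up to order $\lfloor\beta\rfloor$ are bounded. Second, $\Gamma_{X|Y=i}(0,t,x_0,\cdot)$ has bounded derivatives up to order $\lfloor\beta\rfloor+1$, uniformly for $t\in[t_0,T]$; this is exactly the regularity already used in Lemma~\ref{lm:bias-1}. It can be obtained, for instance, from Gaussian-type bounds on the spatial derivatives of the transition density for a bounded Lipschitz drift with unit diffusion. Multiplying by $\Delta_n$ and summing over at most $n$ indices, with $n\Delta_n=T$, yields a contribution $CTh'^{\beta}$.

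\emph{Step 2: the fluctuation part.} Since $\beta\geq1$, $b_i^*$ is Lipschitz with constant $L=\|b_i^{*\prime}\|_\infty$. Conditioning on $\mathcal{F}_{t_k}$ and using the Markov property gives
\begin{equation*}
\mathbb{E}\bigl[|X_s-X_{t_k}|\,\big|\,\mathcal{F}_{t_k}\bigr]\leq\|b_i^*\|_\infty\Delta_n+\sqrt{\Delta_n}\quad\text{for }s\in[t_k,t_{k+1}].
\end{equation*}
Hence the $k$-th fluctuation term is bounded by $L\Delta_n(\|b_i^*\|_\infty\Delta_n+\sqrt{\Delta_n})\,\mathbb{E}|K_{h'}(X_{t_k}-x)|$. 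Moreover $\mathbb{E}|K_{h'}(X_{t_k}-x)|=\int|K(z)|\Gamma(0,t_k,x_0,x+h'z)dz\leq\|K\|_{L^1}\sup_{t\geq t_0,y}\Gamma(0,t,x_0,y)$, which is finite by the Gaussian upper bound on $\Gamma$. Summing over $k$ gives $C\sqrt{\Delta_n}$. Combining the two steps gives the claimed bound $C(h'^{\beta}+\sqrt{\Delta_n})$, with $C$ depending on $\beta,t_0,T,\mathbf{b}^*,K$.

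\emph{Main obstacle.} The delicate point is Step 1: it needs $\lfloor\beta\rfloor$-fold Hölder regularity of $y\mapsto\Gamma_{X|Y=i}(0,t,x_0,y)$ with constants uniform in $t$. Near $t=0$ the derivative bounds blow up like $t^{-(\ell+1)/2}$, which is not summable against $\Delta_n$. This is why restricting to $t_k\geq t_0$ matters. On $[t_0,T]$ I would first try to get the derivative bounds from the Girsanov/Dacunha-Castelle explicit representation of the density as a Gaussian kernel times a smooth functional, differentiating under the expectation. As a fallback, I would reuse directly whatever regularity statement underlies Lemma~\ref{lm:bias-1}.
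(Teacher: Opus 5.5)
Your decomposition is the same as the paper's: freeze $b_i^*$ at $X_{t_k}$, treat the frozen part as a kernel bias and the remainder as a discretisation error. You execute both halves differently, though, and in each case your version is the one that actually gives the bound as stated.

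\textbf{Fluctuation term.} The paper applies Cauchy--Schwarz, pairing $(\mathbb{E}[K_{h'}^2(X_{s_k}^{ji}-x)])^{1/2}$ with an $L^2$ bound on $b_i^*(X_u^{ji})-b_i^*(X_{s_k}^{ji})$. Since $\mathbb{E}[K_{h'}^2(X_{s_k}^{ji}-x)]$ is of order $h'^{-1}$, summing over $k$ really gives $C\sqrt{\Delta_n/h'}$. The factor $h'^{-1/2}$ is hidden in a ``constant depending on $\|K\|$''; this is harmless for Theorem~\ref{thm:upper-bound} since $\Delta_n=\mathcal{O}(N^{-2})$, but it is not the lemma. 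Your argument conditions on $\mathcal{F}_{t_k}$, using the pathwise bound $|X_s-X_{t_k}|\leq\|b_i^*\|_\infty\Delta_n+|W_s-W_{t_k}|$ and the independence of the Brownian increment. It therefore only involves $\mathbb{E}|K_{h'}(X_{t_k}-x)|\leq\|K\|_{L^1}\sup_{t\geq t_0,y}\Gamma_{X|Y=i}(0,t,x_0,y)$, and so it is uniform in $h'$.

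\textbf{Frozen part.} The paper expands $(b\zeta)_{i,\Delta_n}$ to order $\lfloor\beta\rfloor+1$, asserting it is $\mathcal{C}^{\lfloor\beta\rfloor+1}$ because $\zeta_i^*\in\mathcal{C}^\infty$. That regularity is not available, since $b_i^*\in\Sigma(\beta,R)$ is only $\mathcal{C}^{\lfloor\beta\rfloor}$. Your expansion to order $\lfloor\beta\rfloor$ with a H\"older remainder needs only three things:
\begin{itemize}
\item the vanishing of moments $1,\dots,\lfloor\beta\rfloor$;
\item $\int|z|^{\beta}|K(z)|dz<\infty$;
\item $b_i^*\Gamma_{X|Y=i}(0,t,x_0,\cdot)\in\Sigma(\beta,C)$ uniformly in $t\in[t_0,T]$.
\end{itemize}
By Leibniz, the last follows from bounded spatial derivatives of $\Gamma_{X|Y=i}(0,t,x_0,\cdot)$ up to order $\lfloor\beta\rfloor+1$. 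The paper merely asserts this input (proof of Lemma~\ref{lm:bias-1}), so relying on it puts you on the same footing.

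\textbf{One caution on how you plan to obtain that input.} Differentiating the Girsanov/Dacunha-Castelle representation under $\widetilde{\mathbf{E}}$ will not work. The potential $-\frac12(b_i^{*2}+b_i^{*\prime})$ has one derivative fewer than $b_i^*$, so you would fall two derivatives short of what Leibniz requires. The natural source is parabolic (Schauder-type) regularity for the forward equation $\partial_t\Gamma=\frac12\partial_y^2\Gamma-\partial_y(b_i^*\Gamma)$. This does give bounded spatial derivatives up to order $\lfloor\beta\rfloor+1$, uniformly for $t\in[t_0,T]$, which is exactly what your argument needs, though not the $\mathcal{C}^\infty$ regularity the paper claims.
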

Lemma~\ref{lm:bias-1} and Lemma~\ref{lm:bias-2} give the upper bounds on the bias terms for the respective kernel estimators $\w{\zeta}_{i,N,h_{i,N}}$ and $\w{(b\zeta)}_{i,N,h_{i,N}}$. The proofs are provided in the appendix.

\begin{lemme}[\textbf{Bernstein's inequality}]\label{lm:bernstein}
    Let $X_1, \ldots, X_n$ be independent and square integrable random variables such that for some nonnegative constants $v$ and $b$, we have $\mathrm{Var}(X_j) \leq v$ and $|X_j| \leq b$ almost surely for all $j \in [\![1,N]\!]$. The following holds: 
    \begin{equation*}
        \mathbb{P}\left(\sum_{j=1}^{N}\left(X_j - \mathbb{E}\left[X_i\right]\right) \geq x\right) \leq \exp\left(-\dfrac{x^2}{2Nv + \frac{2}{3}bx}\right).
    \end{equation*}
\end{lemme}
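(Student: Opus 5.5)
We follow the Cramér–Chernoff route. Set $Z_j := X_j - \mathbb{E}[X_j]$ and $S := \sum_{j=1}^{N} Z_j$, so the $Z_j$ are independent, centered, and satisfy $\mathrm{Var}(Z_j)=\mathbb{E}[Z_j^2] \leq v$. For every $\lambda>0$, Markov's inequality applied to $e^{\lambda S}$, together with independence, gives
\begin{equation*}
\mathbb{P}(S \geq x) \leq e^{-\lambda x}\prod_{j=1}^{N}\mathbb{E}\left[e^{\lambda Z_j}\right].
\end{equation*}
Everything then reduces to a good bound on each $\mathbb{E}[e^{\lambda Z_j}]$, followed by an optimisation in $\lambda$.

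For the moment generating function we use Bennett's device. Let $\psi(u) := (e^u - 1 - u)/u^2$ for $u\neq 0$ and $\psi(0)=1/2$; this function is nondecreasing on $\mathbb{R}$. The argument only needs a \emph{one-sided} bound on $Z_j$. If $Z_j \leq c$ almost surely, then $\lambda Z_j \leq \lambda c$, and monotonicity of $\psi$ gives
\begin{equation*}
e^{\lambda Z_j} \leq 1 + \lambda Z_j + \lambda^2 Z_j^2\,\psi(\lambda c).
\end{equation*}
Taking expectations and using $\mathbb{E}[Z_j]=0$ together with $1+u\leq e^u$, we get
\begin{equation*}
\log\mathbb{E}\left[e^{\lambda Z_j}\right] \leq v\lambda^2\psi(\lambda c).
\end{equation*}
Next we bound $\psi$ through its series, $\psi(u)=\sum_{k\geq 2}u^{k-2}/k!$. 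Since $k!\geq 2\cdot 3^{k-2}$, for $0<u<3$ we have
\begin{equation*}
\psi(u) \leq \frac{1}{2(1-u/3)}.
\end{equation*}
Combining, for $0<\lambda<3/c$,
\begin{equation*}
\mathbb{P}(S\geq x) \leq \exp\left(-\lambda x + \frac{Nv\lambda^2}{2(1-\lambda c/3)}\right).
\end{equation*}
Choosing $\lambda = x/(Nv + cx/3)$, which indeed lies in $(0,3/c)$, and simplifying gives exactly $\exp\left(-x^2/(2Nv + \tfrac{2}{3}cx)\right)$.

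It remains to obtain the one-sided bound with $c=b$, i.e. to show $Z_j = X_j - \mathbb{E}[X_j] \leq b$ from the hypothesis $|X_j|\leq b$. I expect this to be the delicate step. The two-sided bound $|Z_j|\leq 2b$ is immediate but too weak here, since it would give $c=2b$. The key observation is that only the upper tail of $Z_j$ enters the argument above, because the monotonicity of $\psi$ is used only on the side $\lambda Z_j\leq\lambda c$. So I would first try to bound $\sup Z_j = \operatorname{ess\,sup} X_j - \mathbb{E}[X_j]$ by $b$ directly. This holds as soon as $\mathbb{E}[X_j]\geq 0$, which is the case in every application in this paper: the summands there are nonnegative kernel quantities of the form $K_h(\cdot)$ scaled by time steps, taken on events where they are bounded. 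For general signed $X_j$, I would check whether the variance constraint $\mathrm{Var}(X_j)\leq v$ can absorb the defect, as in the textbook derivations where the statement is phrased with a bounded range. Failing that, I would record explicitly that the hypothesis is used in the form $X_j - \mathbb{E}[X_j] \leq b$.
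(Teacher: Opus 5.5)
\textbf{Verdict: the step you flagged cannot be closed, because the lemma as stated is false; your fallback hypothesis is the right repair.}

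Your Cram\'er--Chernoff argument with Bennett's function $\psi$ is correct. It is the standard proof behind the result the paper invokes; the paper gives no argument of its own, only a pointer to Massart's notes. Under $X_j-\mathbb{E}[X_j]\le c$ and $\mathrm{Var}(X_j)\le v$ it yields exactly $\exp\bigl(-x^2/(2Nv+\tfrac23 cx)\bigr)$.

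The problem is the step you called delicate. With only $|X_j|\le b$ and $\mathrm{Var}(X_j)\le v$ the inequality fails, so the variance constraint does not absorb the defect when $\mathbb{E}[X_j]<0$. A counterexample:
\begin{itemize}
\item Take $b=1$ and $X_j=\pm1$ i.i.d.\ with $\mathbb{P}(X_j=1)=1/N$.
\item Then $v=\frac4N(1-\frac1N)$ and $\sum_j(X_j-\mathbb{E}[X_j])=2(B-1)$ with $B\sim\mathrm{Binomial}(N,1/N)$.
\item For $x=18$ the claimed bound is $\exp\bigl(-324/(20-8/N)\bigr)\le e^{-16.2}$ for every $N$.
\item The left-hand side is $\mathbb{P}(B\ge 10)\to\mathbb{P}(\Pi\ge 10)\ge e^{-1}/10!=e^{-16.10\ldots}$, where $\Pi\sim\mathrm{Poisson}(1)$.
\end{itemize}
So the lemma fails for all large $N$. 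The mechanism is the one you identified: the upward jumps of $X_j-\mathbb{E}[X_j]$ have size $2-2/N$, close to $2b$. In this Poisson regime a small variance does not compensate.

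The hypothesis therefore has to be changed, and there are two correct versions.
\begin{itemize}
\item[(a)] Your fallback: assume $X_j-\mathbb{E}[X_j]\le b$ (e.g.\ $|X_j-\mathbb{E}[X_j]|\le b$). Your proof is then complete. This covers every use of Lemma~\ref{lm:bernstein} in the paper, because it is only applied to centered summands: the sums built from $V^{\zeta}_{i,j,k}$ and $V^{b,\zeta}_{i,j,k}$, and, conditionally on $\mathcal{A}_i$, from $\Psi_{i,j,k}$. For these, $|X_j|\le b$ and $\mathbb{E}[X_j]=0$ give the one-sided bound at once.

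However, your stated reason is wrong. Those summands are not nonnegative kernel terms. They are centered versions, and $b_i^*$ and the Brownian increments are signed. Moreover, $K$ itself must change sign once its order is at least $2$. What makes $\mathbb{E}[X_j]\ge 0$ hold is that $\mathbb{E}[X_j]=0$, not positivity.
\item[(b)] Keep the uncentered bound $X_j\le b$, but let $v$ bound $\mathbb{E}[X_j^2]$ rather than $\mathrm{Var}(X_j)$. This is how Bennett's inequality is stated in Massart's notes. Your argument adapts if you do not center first:
\[
\log\mathbb{E}[e^{\lambda Z_j}]=\log\mathbb{E}[e^{\lambda X_j}]-\lambda\mathbb{E}[X_j]\le\mathbb{E}[e^{\lambda X_j}-1-\lambda X_j]\le\lambda^2\,\mathbb{E}[X_j^2]\,\psi(\lambda b),
\]
and the rest of your proof is unchanged.
\end{itemize}
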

The result of Lemma~\ref{lm:bernstein} is established in \cite{massart2007concentration}, Proposition 2.8 and Equation (2.16), p.23-24. 

\begin{proof}[Proof of Theorem~\ref{thm:exp-bound-drift}]
    Fix $i \in \mathcal{Y}$ and consider any drift function $b_i^* \in \Sigma(\beta, R)$ that satisfies Assumption~\ref{ass:Reg}. From \cite{marie2023nadaraya}, \textit{proof of Proposition 3}, we have for each $x \in \mathrm{Supp}(b_i^*)$,
     \begin{multline*}
        (\widehat{b}_{i,N,h_{i,N},h_{i,N}^{\prime}} - b_i^*)(x)\\
        = \left[\left(\dfrac{\widehat{(b\zeta)}_{i,N,h_{i,N}^{\prime}} - (b\zeta)_i^*}{\widehat{\zeta}_{i,N,h_{i,N}}}\right)(x) - \left(\dfrac{1}{\widehat{\zeta}_{i,N,h_{i,N}}} - \dfrac{1}{\zeta_i^*}\right)(x)(b\zeta)_i^*(x)\right]\mathds{1}_{\widehat{\zeta}_{i,N,h_{i,N}}(x) \geq m/2}\\
        - b_i^*\mathds{1}_{\left|\zeta_i^*(x) - \widehat{\zeta}_{i,N,h_{i,N}}(x)\right| \geq m/2},
     \end{multline*}
     where $m > 0$ is a lower bound of the density functions $f_0$ and $f_1$ on the compact interval $I \subset \mathbb{R}$, and $(b\zeta)_i^* = b_i^* \zeta_i^*$, which implies that
     \begin{equation*}\label{eq:T1i-8}
         \begin{aligned}
             \left\|\widehat{b}_{i,N,h_{i,N},h_{i,N}^{\prime}} - b_i^*\right\|_{\infty} \leq &~ \dfrac{2}{m}\left\|\widehat{(b\zeta)}_{i,N,h_{i,N}^{\prime}} - (b\zeta)_i^*\right\|_{\infty} + \dfrac{2\left\|b_i^*\right\|_{\infty}}{m}\left\|\widehat{\zeta}_{i,N,h_{i,N}} - \zeta_i^*\right\|_{\infty}\\
             & + \left\|b_i^*\right\|_{\infty}\mathds{1}_{\left\|\zeta_i^* - \widehat{\zeta}_{i,N,h_{i,N}}\right\|_{\infty} \geq m/2}.
         \end{aligned}
     \end{equation*}
     Then, under Assumption~\ref{ass:Reg} and conditional on the event $\{N_i > 1\}$, we have
     \begin{multline*}
          \mathbb{P}_i^{\otimes N}\left(\left\|\widehat{b}_{i,N,h_{i,N},h_{i,N}^{\prime}} - b_i^*\right\|_{\infty} \geq \delta_{i,N}\right) \leq \mathbb{P}_i^{\otimes N}\left(\left\|\widehat{(b\zeta)}_{i,N,h_{i,N}^{\prime}} - (b\zeta)_i^*\right\|_{\infty} \geq \dfrac{m\delta_{i,N}}{6}\right)\\
           + \mathbb{P}_i^{\otimes N}\left(\left\|\widehat{\zeta}_{i,N,h_{i,N}} - \zeta_i^*\right\|_{\infty} \geq \dfrac{m\delta_{i,N}}{6\|b_i^*\|_{\infty}}\right) + \mathbb{P}_i^{\otimes N}\left(\mathds{1}_{\left\|\zeta_i^* - \widehat{\zeta}_{i,N,h_{i,N}}\right\|_{\infty} \geq m/2} \geq \dfrac{\delta_{i,N}}{3\left\|b_i^*\right\|_{\infty}}\right).
     \end{multline*}
     From the Markov inequality, we have
     \begin{equation*}
         \mathbb{P}_i^{\otimes N}\left(\mathds{1}_{\left\|\zeta_i^* - \widehat{\zeta}_{i,N,h_{i,N}}\right\|_{\infty} \geq m/2} \geq \dfrac{\delta_{i,N}}{3\left\|b_i^*\right\|_{\infty}}\right) \leq \dfrac{3\|b_i^*\|_{\infty}}{\delta_{i,N}}\mathbb{P}_i^{\otimes N}\left(\left\|\zeta_i^* - \widehat{\zeta}_{i,N,h_{i,N}}\right\|_{\infty} \geq \dfrac{m}{2}\right).
     \end{equation*}
     We deduce that
     \begin{multline}\label{eq:T1i-9}
              \mathbb{P}_i^{\otimes N}\left(\left\|\widehat{b}_{i,N,h_{i,N}, h_{i,N}^{\prime}} - b_i^*\right\|_{\infty} \geq \delta_{i,N}\right) \leq \dfrac{3\|b_i^*\|_{\infty}}{\delta_{i,N}}\mathbb{P}_i^{\otimes N}\left(\left\|\widehat{\zeta}_{i,N,h_{i,N}} - \zeta_i^*\right\|_{\infty} \geq \dfrac{m}{2}\right)\\
              + \mathbb{P}_i^{\otimes N}\left(\left\|\widehat{(b\zeta)}_{i,N,h_{i,N}^{\prime}} - (b\zeta)_i^*\right\|_{\infty} \geq \dfrac{m\delta_{i,N}}{6}\right) + \mathbb{P}_i^{\otimes N}\left(\left\|\widehat{\zeta}_{i,N,h_{i,N}} - \zeta_i^*\right\|_{\infty} \geq \dfrac{m\delta_{i,N}}{6\|b_i^*\|_{\infty}}\right).
     \end{multline}
    For all $s \in [t_0,T]$, set $\eta(s) = s_k^n$ for all $s \in [s_k^n, s_{k+1}^n)$ and $\zeta_{i,\Delta_n}(x) = \sum_{k=k_0}^{n-1}\frac{s_{k+1}^n - s_k^n}{T-t_0}\Gamma_{X|Y=i}(0,s_k^n,x_0,x)$. Then, for all $x \in \mathrm{Supp}(b_i^*)$, we have 
    \begin{equation*}
        \begin{aligned}
            \left|\zeta_{i,\Delta_n}(x) - \zeta_i^*(x)\right| \leq &~ \dfrac{1}{T-t_0}\int_{t_0}^{T}\left|\Gamma_{X|Y=i}(0,\eta(t), x_0, x) - \Gamma_{X|Y=i}(0,t,x_0,x)\right|dt.  
        \end{aligned}
    \end{equation*}
    Since, for any $x \in I$, the function $t \mapsto \Gamma_{X|Y=i}(0,t,x_0,x)$ is Lipschitz on $[t_0, T]$, there exists a constant $C>0$ depending on $t_0$ and $T$ such that for all $x \in I$,
    \begin{equation}\label{eq:f-error}
        \left|\zeta_{i,\Delta_n}(x) - \zeta_i^*(x)\right| \leq \dfrac{C}{T-t_0}\int_{t_0}^{T}|\eta(t) - t|dt \leq C\Delta_n.
    \end{equation}
    Conditional on $\{N_i > 1\}$, we have the following.
    \begin{multline*}
        \left|\widehat{\zeta}_{i,N,h_{i,N}}(x) - \zeta_{i}^*(x)\right| = \left|\widehat{\zeta}_{i,N,h_{i,N}}(x) - \zeta_{i,\Delta_n}(x)\right| + \left|\zeta_{i,\Delta_n}(x) - \zeta_i^*(x)\right|\\
        \leq \dfrac{1}{N_i(T-t_0)}\left|\sum_{j=1}^{N_i}\sum_{k=k_0}^{n-1}\left(s_{k+1}^n - s_k^n\right)\left(K_{h_{i,N}}(X_{s_k^n}^{ji} - x) - \Gamma_{X|Y=i}(0, s_k^n, x_0, x)\right)\right| + C\Delta_n\\
        \leq \underset{x \in \mathrm{Supp}(b_i^*)}{\sup}\left|\dfrac{1}{N_i(T-t_0)}\sum_{j=1}^{N_i}\sum_{k=k_0}^{n-1}\left(s_{k+1}^n - s_k^n\right)\left[K_{h_{i,N}}(X_{t_k}^{ji} - x) - \Gamma_{X|Y=i}(0,t_k,x_0,x)\right]\right| + C\Delta_n.
    \end{multline*}
    Since the function $x \mapsto \dfrac{1}{N_i(T-t_0)}\sum_{j=1}^{N_i}\sum_{k=k_0}^{n-1}(s_{k+1}^n - s_k^n)\left[K_{h_{i,N}}(X_{s_k^n}^{ji} - x) - \Gamma_{X|Y=i}(0,s_k^n,x_0,x)\right]$ is continuous, there exists $x^* \in \mathrm{Supp}(b_i^*)$ such that
    \begin{multline*}
        \underset{x \in \mathrm{Supp}(b_i^*)}{\sup}\left|\dfrac{1}{N_i(T-t_0)}\sum_{j=1}^{N_i}\sum_{k=k_0}^{n-1}\left(s_{k+1}^n - s_k^n\right)\left[K_{h_{i,N}}(X_{s_k^n}^{ji} - x) - \Gamma_{X|Y=i}(0,s_k^n,x_0,x)\right]\right|\\
        = \left|\dfrac{1}{N_i(T-t_0)}\sum_{j=1}^{N_i}\sum_{k=k_0}^{n-1}\left(s_{k+1}^n - s_k^n\right)\left[K_{h_{i,N}}(X_{s_k^n}^{ji} - x^*) - \Gamma_{X|Y=i}(0,s_k^n,x_0,x^*)\right]\right|.
    \end{multline*}
    We deduce that conditional on $\{N_i > 1\}$,
    \begin{equation*}
        \begin{aligned}
            \left\|\widehat{\zeta}_{i,N,h_N} - \zeta_i^*\right\|_{\infty} \leq &~ \left|\dfrac{1}{N_i(T-t_0)}\sum_{j=1}^{N_i}\sum_{k=k_0}^{n-1}\left(s_{k+1}^n - s_k^n\right)V_{i,j,k}^{\zeta}(x^*)\right|\\
            & + \dfrac{1}{N_i(T-t_0)}\sum_{j=1}^{N_i}\sum_{k=k_0}^{n-1}\left(s_{k+1}^n - s_k^n\right)\left|B_{i,j,k}^{\zeta}(x^*)\right| + C\Delta_n,
        \end{aligned}
    \end{equation*}
    where for all $x \in \mathbb{R}$,
    \begin{equation}\label{eq:V-f}
        \begin{aligned}
            V_{i,j,k}^{\zeta}(x) := &~ K_{h_{i,N}}(X_{s_k^n}^{ji} - x) - \mathbb{E}_{\mathbb{P}_i^{\otimes N}}\left[K_{h_{i,N}}(X_{s_k^n}^{ji} - x)\right], ~~ (i,j,k) \in \mathcal{Y} \times [\![1,N_i]\!] \times [\![k_0,n-1]\!]\\ 
            B_{i,j,k}^{\zeta}(x) := &~ \mathbb{E}_{\mathbb{P}_i^{\otimes N}}\left[K_{h_{i,N}}(X_{s_k^n}^{ji} - x)\right] - \Gamma_{X|Y=i}(0,s_k^n,x_0,x), ~~ (i,j,k) \in \mathcal{Y} \times [\![1,N_i]\!] \times [\![k_0,n-1]\!].
        \end{aligned}
    \end{equation}
    By Lemma~\ref{lm:bias-1} and the assumptions therein, we have $\left|B_{i,j,k}^{\zeta}(x^*)\right| = \mathcal{O}(h_{i,N}^{\beta})$ and we obtain the following:
    \begin{equation}\label{eq:T1i-10bis}
        \left\|\widehat{\zeta}_{i,N,h_{i,N}} - \zeta_i^*\right\|_{\infty} \leq \left|\dfrac{1}{N_i(T-t_0)}\sum_{j=1}^{N_i}\sum_{k=k_0}^{n-1}\left(s_{k+1}^n - s_k^n\right)V_{i,j,k}^{\zeta}(x)\right| + C(h_{i,N}^{\beta} + \Delta_n).
    \end{equation}
    For all $x \in \mathrm{Supp}(b_i^*)$, in event $\{N_i > 1\}$ and by Equation~\eqref{eq:f-error}, $\left|(b\zeta)_{i,\Delta_n}(x) - (b\zeta)_i(x)\right| = \mathcal{O}(\Delta_n)$ and
    \begin{multline*}
        \left|\widehat{(b\zeta)}_{i,N,h_{i,N}^{\prime}}(x) - (b\zeta)_i^*(x)\right| \leq \left|\dfrac{1}{N_i(T-t_0)}\sum_{j=1}^{N_i}\sum_{k=k_0}^{n-1}K_{h_{i,N}^{\prime}}(X_{s_k^n}^{ji} - x)(X_{s_{k+1}^n}^{ji} - X_{s_k^n}^{ji}) - (b\zeta)_i^*(x)\right|\\
        \leq \dfrac{1}{N_i(T-t_0)}\left(\left|\sum_{j=1}^{N_i}\sum_{k=k_0}^{n-1}V_{i,j,k}^{b,\zeta}(x)\right| + \left|\sum_{j=1}^{N_i}B_{i,j}^{b,\zeta}(x)\right| + \left|\sum_{j=1}^{N_i}\sum_{k=k_0}^{n-1}\Psi_{i,j,k}(x)\right|\right) + C\Delta_n,
    \end{multline*}
    where $C>0$ is a constant and for each $i \in \mathcal{Y}$ and for all $x \in \mathrm{Supp}(b_i^*)$ and $(j,k) \in [\![1,N_i]\!] \times [\![k_0,n-1]\!]$,
    \begin{equation}\label{eq:V-bf}
        \begin{aligned}
            V_{i,j,k}^{b,\zeta}(x) := &~ K_{h_{i,N}^{\prime}}(X_{s_k^n}^{ji} - x)\int_{s_k^n}^{s_{k+1}^n}b_i^*(X_{u}^{ji})du - \mathbb{E}_{\mathbb{P}_i^{\otimes N}}\left[K_{h_{i,N}^{\prime}}(X_{s_k^n}^{ji} - x)\int_{s_k^n}^{s_{k+1}^n}b_i^*(X_{u}^{ji})du\right],\\ 
            B_{i,j}^{b,\zeta}(x) := &~ \sum_{k=k_0}^{n-1}\mathbb{E}_{\mathbb{P}_i^{\otimes N}}\left[K_{h_{i,N}^{\prime}}(X_{s_k^n}^{ji} - x)\int_{s_k^n}^{s_{k+1}^n}b_i^*(X_{u}^{ji})du\right] - (T-t_0)(b\zeta)_{i,\Delta_n}(x),\\
            \Psi_{i,j,k}(x) := &~ K_{h_{i,N}^{\prime}}(X_{s_k^n}^{ji} - x)(W_{s_{k+1}^n}^j - W_{s_k^n}^j).
        \end{aligned}
    \end{equation}
    By Lemma~\ref{lm:bias-2}, for all $i \in \mathcal{Y}$ and $x \in \mathrm{Supp}(b_i^*), ~ \left|B_{i,j}^{b,\zeta}(x)\right| = \mathcal{O}\left((h_{i,N}^{\prime \beta} + \sqrt{\Delta_n}\right)$, and since, under Assumptions~\ref{ass:Reg} and \ref{ass:reg-kernel}, the functions $x \mapsto V_{i,j,k}^{b,\zeta}(x)$ and $x \mapsto \sum_{j=1}^{N_i}\sum_{k=k_0}^{n-1}\Psi_{i,j,k}(x)$ are continuous, there exist $y^* \in \mathrm{Supp}(b_i^*)$ and a constant $C>0$ such that
    \begin{multline}\label{eq:T1i-14}
        \left\|\widehat{(b\zeta)}_{i,N,h_{i,N}^{\prime}} - (b\zeta)_i^*\right\|_{\infty}\\
        \leq \dfrac{1}{N_i(T-t_0)}\left(\left|\sum_{j=1}^{N_i}\sum_{k=k_0}^{n-1}V_{i,j,k}^{b,\zeta}(y^*)\right| + \left|\sum_{j=1}^{N_i}\sum_{k=k_0}^{n-1}\Psi_{i,j,k}(y^*)\right|\right) + C\left(h_{i,N}^{\prime \beta} + \sqrt{\Delta_n}\right)  
    \end{multline}
    From Equations~\eqref{eq:T1i-14}, \eqref{eq:T1i-10bis}, \eqref{eq:T1i-9} since $\Delta_n = \mathcal{O}(N^{-2})$, $\max(h_{i,N}^{\beta}, h_{i,N}^{\prime\beta}) = o(\delta_{i,N})$, $N\delta_{i,N}^2 \rightarrow \infty$ as $N \rightarrow \infty$ and $\delta_{i,N} = o(\log^{-1}(N))$, for $N$ large enough, we obtain:
    \begin{equation}\label{eq:T1i-15}
    \begin{aligned}
        \mathbb{P}_i^{\otimes N}\left(\left\|\widehat{b}_{i,N,h_{i,N}, h_{i,N}^{\prime}} - b_i^*\right\|_{\infty} \geq \delta_{i,N}\right) \leq &~ \Lambda_{i1} + \Lambda_{i2} + \Lambda_{i3} + \dfrac{3\|b_i^*\|_{\infty}}{\delta_{i,N}}\Lambda_{i4},
    \end{aligned}
    \end{equation}
    where
    \begin{equation}\label{eq:lambda}
        \begin{aligned}
            \Lambda_{i1} = &~ \mathbb{P}_i^{\otimes N}\left(\left|\sum_{j=1}^{N_i}\sum_{k=k_0}^{n-1}(s_{k+1}^n - s_k^n) V_{i,j,k}^{\zeta}(x^*)\right| \geq \dfrac{mN_i\delta_N(T-t_0)}{12\|b_i^*\|_{\infty}}\right),\\
            \Lambda_{i2} = &~ \mathbb{P}_i^{\otimes N}\left(\left|\sum_{j=1}^{N_i}\sum_{k=k_0}^{n-1}V_{i,j,k}^{b,\zeta}(y^*)\right| \geq \dfrac{mN_i\delta_N(T-t_0)}{18\|b_i^*\|_{\infty}}\right),\\
            \Lambda_{i3} = &~ \mathbb{P}_i^{\otimes N}\left(\left|\sum_{j=1}^{N_i}\sum_{k=k_0}^{n-1}\Psi_{i,j,k}(y^*)\right| \geq \dfrac{mN_i\delta_N(T-t_0)}{18}\right),\\
            \Lambda_{i4} = &~ \mathbb{P}_i^{\otimes N}\left(\left|\sum_{j=1}^{N_i}\sum_{k=k_0}^{n-1}(s_{k+1}^n - s_k^n) V_{i,j,k}^{\zeta}(x^*)\right| \geq \dfrac{mN_i(T-t_0)}{4}\right).
        \end{aligned}
    \end{equation}

\paragraph{Focus on $\Lambda_{i1}$ and $\Lambda_{i4}$.}

From Equation~\eqref{eq:V-f} and for all $(j,k) \in [\![1,N_i]\!] \times [\![k_0, n-1]\!]$, conditional on $\{N_i > 1\}$, $\mathbb{E}_{\mathbb{P}_i^{\otimes N}}\left[V_{i,j,k}^{\zeta}(x^*)\right] = 0$. Under Assumption~\ref{ass:reg-kernel} using Proposition 1.2 in \cite{gobet2002lan}, there exists a constant $C_{t_0, T}>0$ depending on $t_0$ and $T$ such that
\begin{equation*}
    \begin{aligned}
        \mathrm{Var}_{\mathbb{P}_i^{\otimes N}}\left[\sum_{k=k_0}^{n-1}\left(s_{k+1}^n - s_k^n\right)V_{i,j,k}^{\zeta}(x^*)\right] \leq &~ n\Delta_n^2\sum_{k=k_0}^{n-1}\mathbb{E}_{\mathbb{P}_i^{\otimes N}}\left[K_{h_{i,N}}^2(X_{s_k^n}^{ji} - x^*)\right]\\
       = &~ n\Delta_n^2\sum_{k=k_0}^{n-1}\int_{\mathbb{R}}\dfrac{1}{h_{i,N}^2}K^2\left(\dfrac{z-x^*}{h_{i,N}}\right)\Gamma_{X|Y=i}(0, s_k^n, x_0, z)dz\\
       = &~ n\Delta_n^2\sum_{k=k_0}^{n-1}\int_{\mathbb{R}}\dfrac{1}{h_{i,N}}K^2(z)\Gamma_{X|Y=i}(0, s_k^n, x_0, x_0 + h_{i,N}z)dz\\
       \leq &~ C_{t_0,T}h_{i,N}^{-1}\|K\|^2 =: v. 
    \end{aligned}
\end{equation*}
Moreover, for all $j \in \{1, \ldots, N_i\}, ~ \left|\sum_{k=k_0}^{n-1}(s_{k+1}^n - s_k^n)V_{i,j,k}^{\zeta}(x^*)\right| \leq 2Th_{i,N}^{-1}\|K\|_{\infty} =: b$. Then, since $\delta_{i,N} \rightarrow 0$ as $N \rightarrow \infty$, from Equation~\eqref{eq:lambda}, Lemma~\ref{lm:bernstein} and for $N$ large enough, conditional on $\{N_i > 1\}$, we obtain: 
\begin{equation}\label{eq:bound-lambda-1}
\begin{aligned}
     \Lambda_{i1} \leq &~ 2\exp\left(-\dfrac{\left(\frac{mN_i\delta_{i,N}(T-t_0)}{12\|b_i^*\|_{\infty}}\right)^2}{2N_iC_{t_0,T}h_{i,N}^{-1}\|K\|^2 + \frac{4T\|K\|_{\infty}mN_i\delta_Nh_{i,N}^{-1}}{36\|b_i^*\|_{\infty}}}\right) \leq 2\exp\left(-\dfrac{m^2N_i\delta_{i,N}^2h_{i,N}}{576C_{t_0,T}\left\|b_i^*\right\|_{\infty}^2\|K\|^2}\right),\\ \Lambda_{i4} \leq &~ 2\exp\left(-\dfrac{\left(\frac{mN_i(T-t_0)}{4}\right)^2}{2N_iC_{t_0,T}h_{i,N}^{-1}\|K\|^2 + \frac{T\|K\|_{\infty}mh_{i,N}^{-1}N_i}{3}}\right) \leq 2\exp\left(-\dfrac{3m^2N_ih_{i,N}}{96C_{t_0,T}\|K\|^2 + 16mT\|K\|_{\infty}}\right).
\end{aligned}
\end{equation}

\paragraph{Focus on $\Lambda_{i2}$.}

In event $\{N_i > 1\}$ and from Equation~\eqref{eq:V-bf}, we obtain for all $j \in \{1, \ldots, N_i\}$, $\mathbb{E}_{\mathbb{P}_i^{\otimes N}}\left[\sum_{k=k_0}^{n-1}V_{i,j,k}^{b,f}(y^*)\right] = 0$. From Assumptions~\ref{ass:Reg} and \ref{ass:reg-kernel} and using Cauchy-Schwarz's inequality together with Proposition 1.2 in \cite{gobet2002lan}
\begin{equation*}
    \begin{aligned}
        \mathrm{Var}_{\mathbb{P}_i^{\otimes N}}\left[\sum_{k=k_0}^{n-1}V_{i,j,k}^{b,\zeta}(y^*)\right] \leq &~ \mathbb{E}_{\mathbb{P}_i^{\otimes N}}\left[\left(\sum_{k=k_0}^{n-1}K_{h_{i,N}^{\prime}}(X_{s_k^n}^{ji}-x)\int_{s_k^n}^{s_{k+1}^n}b_i^*(X_u^{ji})du\right)^2\right]\\
        \leq &~ n\Delta_n^2\|b_i^*\|_{\infty}^2\sum_{k=k_0}^{n-1}\mathbb{E}_{\mathbb{P}_i^{\otimes N}}\left[K_{h_{i,N}^{\prime}}^2(X_{s_k^n}^{ji} - x)\right]\\
        = &~ n\Delta_n^2\|b_i^*\|_{\infty}^2\sum_{k=k_0}^{n-1}\int_{\mathbb{R}}\dfrac{1}{h_{i,N}^{\prime 2}}K^2\left(\dfrac{z - y^*}{h_{i,N}^{\prime}}\right)\Gamma_{X|Y=i}(0,s_k^n,x_0,z)dz\\
        = &~ n\Delta_n^2\sum_{k=k_0}^{n-1}\int_{\mathbb{R}}\dfrac{1}{h_{i,N}^{\prime}}K^2(z)b_i^{*2}(z)\Gamma_{X|Y=i}(0,s_k^n,x_0,x_0+h_{i,N}^{\prime}z)dz\\
        \leq &~ C_{t_0,T}^{\prime}h_{i,N}^{\prime -1}\|K\|^2\|b_i^*\|_{\infty}^2 =: v,
    \end{aligned}
\end{equation*}
where $C_{t_0,T}^{\prime} > 0$ is a constant depending on $t_0$ and $T$. In addition, for all $j \in \{1, \ldots, N_i\}$, we have $\left|\sum_{k=k_0}^{n-1}V_{i,j,k}^{b,\zeta}(y^*)\right| \leq 2Th_{i,N}^{\prime -1}\left\|b_i^*\right\|_{\infty}\|K\|_{\infty} =: b$. For $N$ large enough, then $\delta_{i,N}$ close enough to 0 and applying Lemma~\ref{lm:bernstein}, we obtain from Equation~\eqref{eq:lambda},
\begin{equation}\label{eq:bound-lambda-2}
    \begin{aligned}
        \Lambda_{i2} \leq &~ \exp\left(-\dfrac{\left(\frac{mN_i\delta_{i,N}(T-t_0)}{18\|b_i^*\|_{\infty}}\right)^2}{2N_iC_{t_0,T}^{\prime}h_{i,N}^{\prime -1}\|K\|^2\|b_i^*\|_{\infty}^2 + \frac{4T\|K\|_{\infty}mN_i\delta_{i,N}h_N^{\prime -1}}{54}}\right)\\
        \leq &~ 2\exp\left(-\dfrac{m^2N_i\delta_{i,N}^2h_{i,N}^{\prime}}{1296C_{t_0,T}^{\prime}\left\|b_i^*\right\|_{\infty}^2\left\|K\right\|^2}\right).
    \end{aligned}
\end{equation}

\paragraph{Focus on $\Lambda_{i3}$.}

Fix $i \in \mathcal{Y}$ and set 
$$\mathcal{A}_i := \left\{\underset{j \in \{1, \ldots, N_i\}}{\sup}{\left|\sum_{k=k_0}^{n-1}\Psi_{i,j,k}(y^*)\right|} \leq (T-t_0)h_{i,N}^{\prime -1}\log(N)\right\}.$$ 
From Equation~\eqref{eq:lambda}, 
\begin{equation*}
    \Lambda_{i3} \leq \mathbb{P}_i^{\otimes N}\left(\left|\sum_{j=1}^{N_i}\sum_{k=k_0}^{n-1}\Psi_{i,j,k}(y^*)\right| \geq \dfrac{mN_i\delta_{i,N}(T-t_0)}{18} \biggm\vert \mathcal{A}_i\right) + \mathbb{P}_i^{\otimes N}\left(\mathcal{A}_i^{c}\right),
\end{equation*}
where $\mathcal{A}_i^{c}$ is the complementary of the random event $\mathcal{A}_i$. We have $\mathbb{E}_{\mathbb{P}_i^{\otimes N}}[\sum_{k=k_0}^{n-1}\Psi_{i,j,k}(y^*)|\mathcal{A}_i] = 0$ and on event $\mathcal{A}_i$, $\left|\sum_{k=k_0}^{n-1}\Psi_{i,j,k}(y^*)\right| \leq (T-t_0)h_{i,N}^{\prime -1}\log(N) =: b$. There exists a constant $C_{t_0,T}^{\prime\prime} > 0$ depending on $t_0$ such that
\begin{align*}
    \mathrm{Var}_{\mathbb{P}_i^{\otimes N}}\left[\sum_{k=k_0}^{n-1}\Psi_{i,j,k}(y^*) \biggm\vert \mathcal{A}_i\right] = &~ \mathbb{E}_{\mathbb{P}_i^{\otimes N}}\left[\sum_{k=k_0}^{n-1}K_{h_{i,N}^{\prime}}^2(X_{s_k^n}^{ji} - x)(W_{s_{k+1}^n}^j - W_{s_k^n}^j)^2\biggm\vert \mathcal{A}_i\right]\\
    \leq &~ \Delta_n\sum_{k=k_0}^{n-1}\mathbb{E}_{\mathbb{P}_i^{\otimes N}}\left[K_{h_{i,N}^{\prime}}^2(X_{s_k^n}^{ji} - x) \biggm\vert \mathcal{A}_i\right]\\
    \leq &~ \Delta_n\sum_{k=k_0}^{n-1}\int_{\mathbb{R}}\dfrac{1}{h_{i,N}^{\prime 2}}K^2\left(\dfrac{z-x}{h_{i,N}^{\prime}}\right)\Gamma_{X|Y=i}(0, s_k^n, x_0, z)dz\\
    \leq &~ C_{t_0,T}^{\prime\prime}h_{i,N}^{\prime -1}\|K\|^2 =: v,
\end{align*}
Then, since $\delta_{i,N} \underset{N \rightarrow \infty}{=} o(\log^{-1}(N))$, applying Lemma~\ref{lm:bernstein} with $N$ large enough, we obtain:
\begin{equation}\label{eq:bound-Lambda-31}
    \begin{aligned}
        \Lambda_{i3} \leq &~ \exp\left(-\dfrac{\left(\frac{mN_i\delta_N(T-t_0)}{18}\right)^2}{2N_iC_{t_0,T}^{\prime\prime}h_{i,N}^{\prime -1}\|K\|^2 + \frac{2(T-t_0)mN_i\delta_Nh_{i,N}^{\prime -1}\log(N)}{54}}\right) + \mathbb{P}_i^{\otimes N}\left(\mathcal{A}_i^{c}\right)\\
        \leq &~ 2\exp\left(-\dfrac{m^2N_i\delta_{i,N}^2h_{i,N}^{\prime}}{1296C_{t_0,T}^{\prime\prime}\|K\|^2}\right) + \mathbb{P}_i^{\otimes N}\left(\mathcal{A}_i^{c}\right).
    \end{aligned}
\end{equation}
From \cite{van1995exponential}, \textit{Lemma 2.1}, on event $\{N_i > 1\}$, we obtain
\begin{equation}\label{eq:bound-lambda-32}
    \begin{aligned}
        \mathbb{P}_i^{\otimes N}\left(\mathcal{A}_i^{c}\right) \leq &~ \sum_{j=1}^{N_i}\mathbb{P}\left(\left|\int_{t_0}^{T}K_{h_{i,N}^{\prime}}(X_{\eta(t)}^{ji}-x)dW_t^j\right| > (T-t_0)h_{i,N}^{\prime -1}\log(N)\right)\\
        \leq &~ N_i\exp\left(-\dfrac{(T-t_0)\log^2(N)}{2\|K\|_{\infty}^2}\right),
    \end{aligned}
\end{equation}
where $\eta(t)=t_k$ for all $t \in [t_k, t_{k+1})$. Thus, from Equations~\eqref{eq:bound-lambda-32} and \eqref{eq:bound-Lambda-31}, we obtain
\begin{equation}\label{eq:bound-Lambda-3}
    \Lambda_{i3} \leq 2\exp\left(-\dfrac{m^2N_i\delta_{i,N}^2h_{i,N}^{\prime}}{1296C_{t_0,T}^{\prime\prime}\|K\|^2}\right) + N_i\exp\left(-\dfrac{(T-t_0)\log^2(N)}{2\|K\|_{\infty}^2}\right).
\end{equation}

\paragraph{Conclusion.}

From Equations~\eqref{eq:bound-Lambda-3}, \eqref{eq:bound-lambda-2}, \eqref{eq:bound-lambda-1} and \eqref{eq:T1i-15}, conditional on $\{N_i > 1\}$, we obtain
\begin{align*}
    \mathbb{P}_i^{\otimes N}\left(\left\|\widehat{b}_{i,N,h_{i,N},h_{i,N}^{\prime}} - b_i^*\right\|_{\infty} \geq \delta_{i,N}\right) \leq &~ 6\exp\left[-\mathbf{C}N_i\delta_{i,N}^2(h_{i,N} \land h_{i,N}^{\prime})\right] + N_i\exp\left(-\dfrac{(T-t_0)\log^2(N)}{2\|K\|_{\infty}^2}\right)\\
    & + \dfrac{6\|b_i^*\|_{\infty}}{\delta_{i,N}}\exp\left(-\mathbf{C}^{\prime}N_ih_{i,N}\right),
\end{align*}
where 
\begin{equation*}
    \begin{aligned}
        \mathbf{C}:= &~ \min\left\{\dfrac{m^2}{576C_{t_0,T}\left\|b_i^*\right\|_{\infty}^2\|K\|^2}, \dfrac{m^2}{1296C_{t_0,T}^{\prime}\left\|b_i^*\right\|_{\infty}^2\|K\|^2}, \dfrac{m^2}{1296C_{t_0,T}^{\prime\prime}\|K\|^2}, ~ i \in \mathcal{Y}\right\},\\
        \mathbf{C}^{\prime} := &~ \frac{3m^2}{96C_{t_0,T}\|K\|^2 + 16mT\|K\|_{\infty}}.
    \end{aligned}
\end{equation*}
\end{proof}

\subsection{Proof of Theorem~\ref{thm:upper-bound}}

\begin{proof}
     The excess risk of plug-in classifier $\w{g}$ is given by
    \begin{align*}
        R(g_{\w{\bf f}}) - R(g_{{\bf f}^*}) = &~ \mathbb{E}_{X,Y}\left[|2\Phi_{\bf f^*}(X) - 1|\mathds{1}_{g_{\w{\bf f}}(X) \neq g_{\bf f^*}(X)}\right]\\
         \leq &~ 2\varepsilon\mathbb{P}_{X,Y}\left(\left\{0 < \left|\Phi_{\bf f^*}(X) - \dfrac{1}{2}\right| \leq \varepsilon\right\} \cap \left\{g_{\w{\bf f}}(X) \neq g_{\bf f^*}(X)\right\}\right)\\
         & + 2\mathbb{P}_{X,Y}\left(\left\{\left|\Phi_{\bf f^*}(X) - \dfrac{1}{2}\right| > \varepsilon\right\} \cap \left\{g_{\w{\bf f}}(X) \neq g_{\bf f^*}(X)\right\}\right).
    \end{align*}
    Since
    \begin{align*}
        & \left\{g_{\w{\bf f}}(X) \neq g_{{\bf f^*}}(X)\right\} = \left\{\Phi_{\w{\bf f}}(X) \geq \dfrac{1}{2}, \Phi_{\bf f^*}(X) < \dfrac{1}{2}\right\} \cup \left\{\Phi_{\w{\bf f}}(X) < \dfrac{1}{2}, \Phi_{\bf f^*}(X) \geq \dfrac{1}{2}\right\},\\
        & \left\{\left|\Phi_{\bf f^*}(X) - \dfrac{1}{2}\right| > \varepsilon\right\} = \left\{\Phi_{\bf f^*}(X) \geq \dfrac{1}{2} + \varepsilon\right\} \cup \left\{\Phi_{\bf f^*}(X) \leq \dfrac{1}{2} - \varepsilon\right\},
    \end{align*}
    we deduce that for all ${\bf f^*} \in {\bf F}(\beta, R)$, 
    \begin{align*}
        & \left\{\left|\Phi_{\bf f^*}(X) - \dfrac{1}{2}\right| > \varepsilon\right\} \cap \left\{g_{\w{\bf f}}(X) \neq g_{\bf f^*}(X)\right\} = \left\{\left|\Phi_{\w{\bf f}}(X) - \Phi_{\bf f^*}(X)\right| \geq \varepsilon\right\}.
    \end{align*}
    We then obtain the following: 
    \begin{equation}\label{eq:ExR1}
        \begin{aligned}
            \mathbb{E}_{\mathbb{P}^{\otimes N}}\left[R(g_{\w{\bf f}}) - R(g_{\bf f^*})\right] \leq &~ 2\varepsilon\mathbb{P}_{X}\left(0 < \left|\Phi_{\bf f^*}(X) - \dfrac{1}{2}\right| \leq \varepsilon\right)\\
            & + \mathbb{E}_{\mathbb{P}^{\otimes N}}\left[\mathbb{P}_{X}\left(\left|\Phi_{\w{\bf f}}(X) - \Phi_{\bf f^*}(X)\right| \geq \varepsilon\right)\right], ~~ .
        \end{aligned}
    \end{equation}

    \subsection*{Upper bound of $\mathbb{E}_{\mathbb{P}^{\otimes N}}\left[\mathbb{P}_{X}\left(\left|\Phi_{\w{\bf f}}(X) - \Phi_{\bf f^*}(X)\right| \geq \varepsilon\right)\right]$}

    From Equation~\eqref{eq:estimation-p} and the Strong Law of Large Numbers, there exists $M_0 \in \mathbb{N}$ such that for all $N \geq M_0$, $\min(\w{p}_0, \w{p}_1) \geq p_{\min}^*/3 ~~ a.s.$, where $p_{\min}^* = \min(p_0^*, p_1^*)$. We have the following:
    \begin{equation*}
        \begin{aligned}
            \left|\Phi_{\w{\bf f}}(X) - \Phi_{\bf f^*}(X)\right| \leq \left|\w{\Phi}_{\w{\bf b}}(\bar{X}) - \Phi_{\w{\bf b}}^*(\bar{X})\right| + \left|\Phi_{\w{\bf b}}^*(\bar{X}) - \Phi_{\bf b^*}^*(\bar{X})\right| + \left|\Phi_{\bf b^*}^*(\bar{X}) - \Phi_{\bf b^*}^*(X)\right|,
        \end{aligned}
    \end{equation*}
    and from the proof of Theorem 1 in \cite{denis2024nonparametric}, we have
    \begin{equation*}
        \begin{aligned}
            \left|\w{\Phi}_{\w{\bf b}}(\bar{X}) - \Phi_{\w{\bf b}}^*(\bar{X})\right| \leq C_{{\bf p}_{\min}^*}\left(\left|\w{p}_0 - p_0^*\right| + \left|\w{p}_1 - p_1^*\right|\right), ~~ \left|\Phi_{\w{\bf b}}^*(\bar{X}) - \Phi_{\bf b^*}^*(\bar{X})\right| \leq \sum_{i \in \mathcal{Y}}\left|\bar{F}_{\w{\bf b}}^i(\bar{X}) - \bar{F}_{\bf b^*}^i(\bar{X})\right|,
        \end{aligned}
    \end{equation*}
   and $\mathbb{E}_X\left(\left|\Phi_{\bf b^*}^*(\bar{X}) - \Phi_{\bf b^*}^*(X)\right|^2\right) = \mathcal{O}(\Delta_n)$, where $C_{{\bf p}_{\min}^*} > 0$ is a constant depending on ${\bf p}_{\min}^*$. Using Markov's inequality, for all $\varepsilon>0, ~ \mathbb{P}_X\left(\left|\Phi_{\bf b^*}^*(\bar{X}) - \Phi_{\bf b^*}^*(X)\right|>\varepsilon\right) = \mathcal{O}\left(\Delta_n\varepsilon^{-2}\right)$. Then we obtain:
    \begin{multline}\label{eq:UpperBoundPHI}
            \mathbb{E}_{\mathbb{P}^{\otimes N}}\left[\mathbb{P}_{X}\left(\left|\Phi_{\w{\bf f}}(X) - \Phi_{\bf f^*}(X)\right| \geq \varepsilon\right)\right] \leq \sum_{i \in \mathcal{Y}}\mathbb{E}_{\mathbb{P}^{\otimes N}}\left[\mathbb{P}_{X}\left(\left|\bar{F}_{\widehat{\mathbf{b}}}^i(\bar{X}) - \bar{F}_{\mathbf{b}^*}^i(\bar{X})\right| \geq \dfrac{\varepsilon}{6}\right)\right]\\
            + \sum_{i \in \mathcal{Y}}\mathbb{P}^{\otimes N}\left(\left|\widehat{p}_i - p_i^*\right| \geq \dfrac{\varepsilon}{6C_{{\bf p}_{\min}^*}}\right) + \mathcal{O}(\Delta_n\varepsilon^{-2}).
    \end{multline}

    \subsubsection*{Upper bound of $\sum_{i \in \mathcal{Y}}\mathbb{E}_{\mathbb{P}^{\otimes N}}\left[\mathbb{P}_{X}\left(\left|\bar{F}_{\widehat{\mathbf{b}}}^i(\bar{X}) - \bar{F}_{\mathbf{b}^*}^i(\bar{X})\right| \geq \dfrac{\varepsilon}{6}\right)\right]$}

    From Equation~\eqref{eq:discrete}, for each $i \in \mathcal{Y}$,
     \begin{equation}
         \begin{aligned}
             \bar{F}_{\widehat{\mathbf{b}}}^i(\bar{X}) - \bar{F}_{\mathbf{b}^*}^i(\bar{X}) = &~ \int_{0}^{T}(\widehat{b}_{i,N,h_{i,N}} - b_i^*)(X_{\eta(s)})dW_s + \int_{0}^{T}(\widehat{b}_{i,N,h_{i,N}} - b_i^*)(X_{\eta(s)})(b_Y^* - b_i^*)(X_{\eta(s)})ds \\
             & - \dfrac{1}{2}\int_{0}^{T}(\widehat{b}_{i,N,h_{i,N}} - b_i^*)^2(X_{\eta(s)})ds.
         \end{aligned} 
     \end{equation}
     Then, setting $C_{\bf b^*} = \max\left\{\left\|b_0^*\right\|_{\infty}, \left\|b_1^*\right\|_{\infty}\right\}$, for each $i \in \mathcal{Y}$ and for all $\varepsilon > 0$ close enough to $0$,
     \begin{multline}\label{eq:Fbar-i}
        \mathbb{P}_{X}\left(\left|\bar{F}_{\widehat{\mathbf{b}}}^i(\bar{X}) - \bar{F}_{\mathbf{b}^*}^i(\bar{X})\right| \geq \dfrac{\varepsilon}{6}\right) \leq \mathbb{P}_X\left(\left|\int_{0}^{T}(\w{b}_{i,N,h_{i,N},h_{i,N}^{\prime}} - b_i^*)(X_{\eta(s)})dW_s\right| \geq \dfrac{\varepsilon}{18}\right)\\
         + \mathbb{P}_X\left(\left\|\w{b}_{i,N,h_{i,N},h_{i,N}^{\prime}} - b_i^*\right\|_{\infty} \geq \dfrac{\varepsilon}{36TC_{\bf b^*}}\right),
     \end{multline}
    Remark that $\mathbb{P}_X\left(\left\|\w{b}_{i,N,h_{i,N},h_{i,N}^{\prime}} - b_i^*\right\|_{\infty} \geq \frac{\varepsilon}{36TC_{\bf b^*}}\right) = \mathds{1}_{\left\|\w{b}_{i,N,h_{i,N},h_{i,N}^{\prime}} - b_i^*\right\|_{\infty} \geq \varepsilon/(36TC_{\bf b^*})}$. Moreover, From Lemma 2.1 in \cite{van1995exponential}, we obtain
    \begin{equation*}
        \mathbb{P}_X\left(\left|\int_{0}^{T}(\w{b}_{i,N,h_{i,N},h_{i,N}^{\prime}} - b_i^*)(X_{\eta(s)})dW_s\right| \geq \dfrac{\varepsilon}{18}\right) \leq 2\exp\left(-\dfrac{\varepsilon^2}{36T\left\|\w{b}_{i,N,h_{i,N},h_{i,N}^{\prime}} - b_i^*\right\|_{\infty}}\right).
    \end{equation*}
    Then, for any $\delta > 0$ and from the above result and Equation~\ref{eq:Fbar-i}, 
     \begin{multline*}
        \sum_{i \in \mathcal{Y}}\mathbb{E}_{\mathbb{P}^{\otimes N}}\left[\mathbb{P}_{X}\left(\left|\bar{F}_{\widehat{\mathbf{b}}}^i(\bar{X}) - \bar{F}_{\mathbf{b}^*}^i(\bar{X})\right| \geq \dfrac{\varepsilon}{6}\right)\right] \leq \mathbb{P}^{\otimes N}(N_i \leq 1) + 4\exp\left(-\dfrac{\varepsilon^2}{\delta^2}\right) \\
        + \sum_{i \in \mathcal{Y}}\mathbb{E}_{\mathbb{P}^{\otimes N}}\left[\mathds{1}_{N_i>1}\mathbb{P}_i^{\otimes N}\left(\left\|\w{b}_{i,N,h_{i,N},h_{i,N}^{\prime}} -  b_i^*\right\|_{\infty} > \dfrac{\delta}{36TC_{\bf b^*}}\right)\right]\\
        + 2\sum_{i \in \mathcal{Y}}\mathbb{E}_{\mathbb{P}^{\otimes N}}\left[\mathds{1}_{N_i>1}\mathbb{P}_i^{\otimes N}\left(\left\|\w{b}_{i,N,h_{i,N},h_{i,N}^{\prime}} - b_i^*\right\|_{\infty} > \dfrac{\delta}{36T}\right)\right]. 
     \end{multline*}
     Thus, by Theorem~\ref{thm:exp-bound-drift} with $\delta = \frac{1}{2}\log^{3/2}(N)N^{-\beta/(2\beta+1)}$ and $h_{i,N} = h_{i,N}^{\prime} = N^{-1/(2\beta+1)}$
     \begin{multline}\label{eq:Fbar-i2}
         \sum_{i \in \mathcal{Y}}\mathbb{E}_{\mathbb{P}^{\otimes N}}\left[\mathbb{P}_{X}\left(\left|\bar{F}_{\widehat{\mathbf{b}}}^i(\bar{X}) - \bar{F}_{\mathbf{b}^*}^i(\bar{X})\right| \geq \dfrac{\varepsilon}{6}\right)\right] \\
         \leq 4\exp\left(-\dfrac{2\varepsilon^2N^{\frac{2\beta}{2\beta+1}}}{\log^3(N)}\right) + C^*\dfrac{N^{\frac{\beta}{2\beta+1})}}{\log^{3/2}(N)}\mathbb{E}_{\mathbb{P}^{\otimes N}}\left[\mathds{1}_{N_i>1}\exp\left(-\mathbf{C}^{\prime\prime\prime}\dfrac{N_i}{N^{\frac{1}{2\beta+1}}}\right)\right] + \mathbb{P}^{\otimes N}(N_i \leq 1)\\
          + 4N\exp\left(-\dfrac{(T-t_0)\log^2(N)}{2\|K\|_{\infty}^2}\right) + 24\mathbb{E}_{\mathbb{P}^{\otimes}}\left[\mathds{1}_{N_i>1}\exp\left(-\mathbf{C}^{\prime\prime}\dfrac{N_i}{N}\log^3(N)\right)\right],
     \end{multline}
     where the constant $C^* > 0$ depends on ${\bf b^*}$, $\mathbf{C}^{\prime\prime}>0$ is a constant depending on $\mathbf{C}$, $T$ and $C_{\bf b^*}$, and the constant $\mathbf{C}^{\prime\prime\prime} > 0$ depends on $\mathbf{C}^{\prime}$, $T$ and $C_{\bf b^*}$. Since for each $i \in \mathcal{Y}$, $N_i \sim \mathrm{Binomial}(N,p_i^*)$, using the Taylor expansion up to second order,
     \begin{equation}\label{eq:Fbar-i3}
         \begin{aligned}
             \mathbb{E}_{\mathbb{P}^{\otimes}}\left[\mathds{1}_{N_i>1}\exp\left(-\mathbf{C}^{\prime\prime}\dfrac{N_i}{N}\log^3(N)\right)\right] \leq &~ \sum_{j=0}^{N}\exp\left(-\mathbf{C}^{\prime\prime}j\dfrac{\log^3(N)}{N}\right)\binom{N}{j}p_i^{* j}(1-p_i^*)^{N-j}\\
             = &~ \left[1-p_i^* + p_i^{*}\exp(-\mathbf{C}^{\prime\prime}\log^3(N)/N)\right]^N\\
             = &~ \left[1 - \mathbf{C}^{\prime\prime}p_i^*\dfrac{\log^3(N)}{N} + \dfrac{\mathbf{C}^{\prime\prime 2}p_i^*\log^6(N)}{2N^2} + \mathcal{O}\left(\dfrac{\log^6(N)}{N^2}\right)\right]^N\\
             = &~ \mathcal{O}\left(\exp\left(-\mathbf{C}^{\prime\prime}p_i^*\log^3(N)\right)\right).
         \end{aligned}
     \end{equation}
    Similarly, for each $i \in \mathcal{Y}$, we obtain the following: 
    \begin{equation}\label{eq:Fbar-i4}
        \begin{aligned}
            \mathbb{E}_{\mathbb{P}^{\otimes N}}\left[\mathds{1}_{N_i>1}\exp\left(-\mathbf{C}^{\prime\prime\prime}\dfrac{N_i}{N^{\frac{1}{2\beta+1}}}\right)\right] \leq &~ \left[1 - \dfrac{\mathbf{C}^{\prime\prime\prime}p_i^*}{N^{\frac{1}{2\beta+1}}} + \dfrac{\mathbf{C}^{\prime\prime\prime 2}p_i^*}{2N^{\frac{2}{2\beta+1}}} + \mathcal{O}\left(\dfrac{1}{N^{\frac{2}{2\beta+1}}}\right)\right]^N \\
            = &~ \mathcal{O}\left(\exp\left(-\mathbf{C}^{\prime\prime\prime}p_i^*N^{\frac{2\beta}{2\beta+1}}\right)\right).
        \end{aligned}
    \end{equation}
    Finally, since $\mathbb{P}^{\otimes N}(N_i \leq 1) = \mathrm{O}\left(N(1-p_i^*)^{N}\right)$, for $\varepsilon = \log^2(N)N^{-\beta/(2\beta+1)}$ and from Equations~\eqref{eq:Fbar-i4}, \eqref{eq:Fbar-i3} and \eqref{eq:Fbar-i2}, there exists a constant $C>0$ such that
     \begin{equation}\label{eq:Term-F}
     \begin{aligned}
          \sum_{i \in \mathcal{Y}}\mathbb{E}_{\mathbb{P}^{\otimes N}}\left[\mathbb{P}_{X}\left(\left|\bar{F}_{\widehat{\mathbf{b}}}^i(\bar{X}) - \bar{F}_{\mathbf{b}^*}^i(\bar{X})\right| \geq \dfrac{\varepsilon}{6}\right)\right] \leq &~ CN^{-2}.
     \end{aligned} 
     \end{equation}

\subsubsection*{Upper bound of $\sum_{i \in \mathcal{Y}}\mathbb{P}^{\otimes N}\left(\left|\widehat{p}_i - p_i^*\right| \geq \dfrac{\varepsilon}{6C_{{\bf p}_{\min}^*}}\right)$}

Using Bernstein's inequality with $\varepsilon = \log^2(N)N^{-\beta/(2\beta+1)}$, there exists a constant $c>0$ such that
 \begin{equation}\label{eq:Term-p}
     \begin{aligned}
         \mathbb{P}^{\otimes N}\left(\left|\widehat{p}_i - p_i^*\right| \geq \dfrac{\log^2(N)N^{-\frac{\beta}{2\beta+1}}}{6C_{{\bf p^*}_{\min}}}\right) = &~ \mathbb{P}^{\otimes N}\left(\left|\sum_{j=1}^{N}\mathds{1}_{Y_j = i} - Np_i^*\right| \geq \dfrac{\log^2(N)N^{\frac{\beta+1}{2\beta+1}}}{6C_{{\bf p^*}_{\min}}}\right)\\
        \leq &~ 2\exp\left(-c\log^4(N)N^{1/(2\beta+1)}\right).
     \end{aligned}
 \end{equation}
\subsubsection*{Conclusion of the proof}

Finally, since $\Delta_n = \mathcal{O}(N^{-2})$, by Proposition~\ref{prop:margin-condition} with $\varepsilon = \log^2(N)N^{-\beta/(2\beta+1)}$ and from Equations~\eqref{eq:Term-p}, \eqref{eq:Term-F}, \eqref{eq:UpperBoundPHI} and \eqref{eq:ExR1}, there exists a constant $C>0$ such that
\begin{equation*}
    \mathbb{E}_{\mathbb{P}^{\otimes N}}\left[R(g_{\w{\bf f}}) - R(g_{\bf f^*})\right] \leq C\log^4(N)N^{-2\beta/(2\beta+1)}.
\end{equation*}
\end{proof}

\subsection{Proof of Theorem~\ref{thm:lower-bound}}

\begin{proof}
We establish a lower bound of the worst excess risk of the nonparametric plug-in classifier $\widehat{g} = g_{\w{\bf f}}$ with respect to the Bayes classifier $g^{*} = g_{\bf f^*}$, where ${\bf f^*} = ({\bf b^*}, {\bf p^*}) \in {\bf F}(\beta, R) = {\bf B}(\beta, R) \times {\bf P}$. Recall that ${\bf p^*} \in {\bf P}$ represents the discrete law of the label $Y \in \mathcal{Y}$ while ${\bf b^*} = (b_0^*, b_1^*) \in {\bf B}(\beta, R)$ is the vector of drift functions that characterize the diffusion model \eqref{eq:classif-model}. More precisely, we want to prove that there exists a constant $c > 0$ such that 
\begin{equation*}
    \underset{\w{\bf f}}{\inf}\underset{{\bf f^*} \in {\bf F}(\beta, R)}{\sup}\mathbb{E}_{\mathbb{P}^{\otimes N}}\left[R(g_{\w{\bf f}}) - R(g_{\bf f^*})\right] \geq cN^{-2\beta/(2\beta+1)},
\end{equation*}
where the joint distribution $\mathbb{P}^{\otimes N}$ of the learning sample $\mathcal{Z}^N$ is fully characterized by the model parameter ${\bf f^*} \in {\bf F}(\beta, R)$. The common strategy consists of restricting the initial space ${\bf F}(\beta,R)$ to a well-chosen finite set, which is the set of hypotheses, and on which a lower bound of the excess risk is derived. Formally, consider a finite set ${\bf F}^{M} \subset {\bf F}(\beta, R)$ of $M+1$ hypotheses with $M \in \mathbb{N}^*$. We obtain
\begin{equation}\label{eq:lower-bound1}
    \underset{\w{\bf f}}{\inf}\underset{{\bf f^*} \in {\bf F}(\beta, R)}{\sup}\mathbb{E}_{\mathbb{P}^{\otimes N}}\left[R(g_{\w{\bf f}}) - R(g_{\bf f^*})\right] \geq \underset{\w{\bf f}}{\inf}\underset{{\bf f^*} \in {\bf F}^{M}}{\sup}\mathbb{E}_{\mathbb{P}^{\otimes N}}\left[R(g_{\w{\bf f}}) - R(g_{\bf f^*})\right].
\end{equation}
Then, it suffices to prove that there exists a constant $c > 0$ such that
\begin{equation*}
    \underset{\w{\bf f}}{\inf}\underset{{\bf f^*} \in {\bf F}^{M}}{\sup}\mathbb{E}_{\mathbb{P}^{\otimes N}}\left[R(g_{\w{\bf f}}) - R(g_{\bf f^*})\right] \geq cN^{-2\beta/(2\beta+1)}.
\end{equation*}
For this purpose, we consider the diffusion model
\begin{equation}\label{eq:model2}
    dX_t = Yf(X_t)dt + dW_t, ~~ t \in [0,T], ~~ X_0 = x_0 \in \mathbb{R},
\end{equation}
where $b_0^* \equiv 0$ and $b_1^* = f \in \Sigma^M$, with the finite set $\Sigma^M$ of size $M+1$ chosen carefully, the law of the label $Y \in \mathcal{Y} = \{0,1\}$ is set to ${\bf p}^* = \left(1/2, 1/2\right)$. 

\subsubsection{Construction of the set of hypotheses}

Let $M \in \mathbb{N}^*$ and $\beta > 1$. Define $D = \left\lfloor N^{1/(2\beta+1)} \right\rfloor$ and
\begin{align*}
  x_k = \dfrac{k - 1/2}{D}, ~~ \phi_k(x) := &~ RD^{-\beta}K\left(\dfrac{x-x_k}{D^{-1}}\right), ~~ x \in [0,1], ~~ k \in \{1, \ldots, D\},
\end{align*}
where the function $K : \mathbb{R} \longrightarrow [0, \infty)$ is given by $ K(x) = aK_0(2x), ~~ K_0(x) = \exp\left(-\frac{1}{1-x^2}\right)\mathds{1}_{(-1,1)}(x)$ with $a>0$ a sufficiently small real number. The function $K$ satisfies the following:
    \begin{equation}\label{eq:Fun-K}
        K \in \Sigma(\beta,1/2) \cap \mathcal{C}^{\infty}(\mathbb{R}), ~~ \mathrm{and} ~~ K(u) > 0 \iff u \in (-1/2, 1/2),
    \end{equation}
    and the functions $\phi_k$ belong to the H\"older class $\Sigma(\beta, R)$ in the interval $[0,1]$, and satisfy 
    \begin{equation}\label{eq:CompactSupp-Phi-k}
        \forall ~ k = 1, \ldots, D, ~~ \phi_k(x) > 0 \iff x \in \left(\dfrac{k-1}{D}, \dfrac{k}{D}\right),
    \end{equation}
(see \cite{tsybakov2008introduction}, page 92). Suppose that the function $f$ belongs to the finite set $\Sigma^M$ of size $M+1$ given by
\begin{equation}\label{eq:hypothesisSet}
    \Sigma^M := \left\{f_j = \kappa D^{-\beta} + \sum_{k=1}^{D}{\theta_k^j\phi_k}, ~~ \theta^j = \left(\theta_1^j, \ldots, \theta_D^j\right) \in (0,1)^D, ~~ \kappa > 0, ~~ j = 0, \ldots, M\right\},
\end{equation}
where, for $j = 0, ~~ \theta^j = (0,\ldots, 0)$, $f_0 = \kappa D^{-\beta}$. By construction, we have $\Sigma^M \subset \mathcal{C}^{\infty}(\mathbb{R}, \mathbb{R}) \cap \Sigma(\beta, R)$ (see \cite{tsybakov2008introduction}, \textit{chapter 2, page 93}). Moreover, since the function $f \in \Sigma^M \subset \mathcal{C}^{\infty}(\mathbb{R}, \mathbb{R}) \cap \Sigma(\beta, R)$ never vanishes, being strictly positive, we have $b_0^*(x_0) = 0 \neq b_1^*(x_0)$, the H\"ormander's consdition is satisfied, and the random variable  
\begin{equation*}
    \int_{0}^{T}(b_1^* - b_0^*)(X_s)dW_s = \int_{0}^{T}f(X_s)dW_s
\end{equation*}
admits a density function that belongs to the space $\mathcal{C}^{\infty}(\mathbb{R}, \mathbb{R})$ (see \cite{nualart2006malliavin}, \textit{Chapter 2, Theorem 2.3.3, p.133}). We deduce the following finite set:
\begin{equation*}
    {\bf B}^{M}(\beta, R) = \left\{{\bf b^*} = (0, f), ~ f \in \Sigma^M\right\}, ~~ {\bf P} = \{(1/2, 1/2)\} ~~ \mathrm{and} ~~ {\bf F}^M = {\bf B}^{M}(\beta, R) \times {\bf P}.
\end{equation*}
\\
The regression function $\Phi_{\bf f^*} = \Phi_f$ is given by
\begin{equation}\label{eq:Phib}
    \Phi_{f}(X) = \dfrac{\exp\left(F_{f}(X)\right)}{1 + \exp\left(F_{f}(X)\right)},
\end{equation}
where the diffusion process $X = (X_t)_{t \in [0,T]}$ is the unique strong solution of Equation~\eqref{eq:model2}, and
\begin{equation}\label{eq:Ff}
    F_{f}(X) = \int_{0}^{T}{f(X_s)dX_s} - \dfrac{1}{2}\int_{0}^{T}{f^{2}(X_s)ds}.
\end{equation}
Thus, for any estimator $\widehat{f}$ of $f \in \Sigma^M$, the plug-in classifier $\w{g} = g_{\w{f}}$ is given by $\w{g}(X) = \mathds{1}_{\Phi_{\w{f}}(X) \geq 1/2}$. Then, we have the following:
\begin{equation}\label{eq:lower-bound2}
    \underset{\w{\bf f}}{\inf}\underset{{\bf f^*} \in {\bf F}^{M}}{\sup}\mathbb{E}_{\mathbb{P}^{\otimes N}}\left[R(g_{\w{\bf f}}) - R(g_{\bf f^*})\right] = \underset{\w{f}}{\inf}\underset{f \in \Sigma^{M}}{\sup}\mathbb{E}_{\mathbb{P}^{\otimes N}}\left[R(g_{\w{f}}) - R(g^{*})\right].
\end{equation}

\subsubsection{General result on the lower bound of the excess risk}

The proof of the present theorem is based on the Assouad's lemma adapted to the classification problem (see \textit{e.g.} \cite{Audibert2004ClassificationUP}, \textit{Definition 5.1} and \textit{Lemma 5.1}). Let $m \in \mathbb{N}^*, ~ w \in (0,1), ~ \mathfrak{b} \in (0,1], ~ \mathfrak{b}^{\prime} \in (0,1]$. From Definition 5.1 in \cite{Audibert2004ClassificationUP}, a $(m, w, \mathfrak{b}, \mathfrak{b}^{\prime})-$hypercube of probability distributions is a family
\begin{equation*}
    \mathcal{P} := \left\{\mathbb{P}_{\vec{\sigma}}^{X,Y}, ~ \vec{\sigma}:=\left(\sigma_1, \ldots, \sigma_m\right) \in \{-1, +1\}^m\right\}
\end{equation*}
of $2^m$ probability distributions of the random pair $(X, Y)$ in the measurable space $\mathcal{X} \times \mathcal{Y}$ such that the marginal distribution $\mathbb{P}_{\vec{\sigma}}^X$ of $X$ does not depend on $\vec{\sigma}$, that is:
\begin{equation*}
    \forall ~ \vec{\sigma} \in \{-1, +1\}^m, ~~ \mathbb{P}_{\vec{\sigma}}^X(dX) = \mathbb{P}_{(+1, \ldots, +1)}^X(dX) = \mathbb{P}_X.
\end{equation*}
Moreover, there exists a partition $\mathcal{X}_0, \ldots, \mathcal{X}_m$ of $\mathcal{X}$ such that:
\begin{itemize}
    \item for any $j \in \{1, \ldots, m\}$, we have $\mathbb{P}_X(X \in \mathcal{X}_j) = w$
    \item for any $j \in \{0, \ldots, m\}$, for any $X \in \mathcal{X}_j$, we have
    \begin{equation*}
        \mathbb{P}_{\vec{\sigma}}^{X,Y}(Y = 1 | X) = \dfrac{1 + \sigma_j\xi(X)}{2} = 1 - \mathbb{P}_{\vec{\sigma}}^{X,Y}(Y = 0 | X),
    \end{equation*}
    where $\sigma_0 \equiv 1$ and $\xi: \mathcal{X} \longrightarrow [0,1]$ is such that for any $j \in \{1, \ldots, m\}$,
\begin{align*}
 \mathfrak{b} = \sqrt{1 - \left(\mathbb{E}_X\left[\sqrt{1 - \xi^2(X)} \biggm\vert X \in \mathcal{X}_j\right]\right)^2}, ~~~ \mathfrak{b}^{\prime} = \mathbb{E}_X\left[\xi(X) | X \in \mathcal{X}_j\right].
\end{align*}
\end{itemize}
Then, from Lemma 5.1 in \cite{Audibert2004ClassificationUP}, for any binary classification rule $\widehat{g}$ built from the learning sample $\mathcal{Z}^N$, we have the following:
\begin{equation}\label{eq:lower-bound3}
    \underset{\mathbb{P}_{X,Y} \in \mathcal{P}}{\sup}{\left\{\mathbb{E}_{\mathbb{P}^{\otimes N}}\left[R(\widehat{g}) - R(g^*)\right]\right\}} \geq \dfrac{1 - \mathfrak{b}\sqrt{Nw}}{2}mw\mathfrak{b}^{\prime}.
\end{equation}
In our framework, since the discrete law ${\bf p^*} = (1/2, 1/2)$ of the label $Y \in \mathcal{Y}$ is known, the joint distribution $\mathbb{P}_{X,Y}$ of the random pair $(X,Y) \in \mathcal{X} \times \mathcal{Y}$ is completely characterized by the drift function $f \in \Sigma^M$. Then, considering a $(m,w,\mathfrak{b}, \mathfrak{b}^{\prime})-$ hypercube of probability distributions, 
\begin{equation*}
    \mathcal{P}^M := \left\{\mathbb{P}_{f, \vec{\sigma}}, ~~ \vec{\sigma} := (\sigma_1, \ldots, \sigma_m) \in \{-1, +1\}^m, ~~ f \in \Sigma^M\right\}, 
\end{equation*}
with $M = 2^m-1$, we have the following:
\begin{equation}\label{eq:lower-bound4}
    \underset{\w{f}}{\inf}\underset{f \in \Sigma^{M}}{\sup}\mathbb{E}_{\mathbb{P}^{\otimes N}}\left[R(g_{\w{f}}) - R(g^{*})\right] = \underset{\widehat{f}}{\inf}\underset{\mathbb{P}_{X,Y} \in \mathcal{P}^M}{\sup}\mathbb{E}_{\mathbb{P}^{\otimes N}}\left[R(g_{\widehat{f}}) - R(g^*)\right],
\end{equation}
as each empirical classifier $\widehat{g} = g_{\widehat{f}}$ is completely identified by the nonparametric estimator $\widehat{f}$ of the drift function $f \in \Sigma^M$ built from the learning sample $\mathcal{Z}^N$. The next step is the construction of the $(m,w,\mathfrak{b}, \mathfrak{b}^{\prime}) -$ hypercube $\mathcal{P}^M$.

\subsubsection{Construction of the hypercube}

The diffusion process $X$ is the unique strong solution of the following stochastic differential equation:
\begin{equation*}
 X_t = Y\int_{0}^{t}{f(X_s)ds} + W_t, ~~ t \in [0,T], ~~ X_0 = 0,
\end{equation*}
and from \cite{dacunha1986estimation}, \textit{Lemma 2}, $X$ admits a transition density $(s, t, x, y) \mapsto \Gamma_X(s, t, x, y)$ given for all $(s, t, x, y) \in (0,T] \times (0,T] \times \mathbb{R} \times \mathbb{R}$ by
\begin{equation}\label{eq:transition-density-assouad}
    \begin{aligned}
        \Gamma_X(s, t, x, y) := &~ \dfrac{1}{2}\Gamma_{X|Y=1}(s, t, x, y) + \dfrac{1}{2}\Gamma_{X|Y=0}(s, t, x, y)\\
        = &~ \dfrac{1}{2\sqrt{2\pi(t-s)}}\exp\left(-\dfrac{(y-x)^2}{2(t-s)}\right) + \dfrac{\Lambda(s,t,x,y)}{2\sqrt{2\pi(t-s)}}\exp\left(-\dfrac{(y-x)^2}{2(t-s)} + \int_{0}^{x}{f(u)du}\right),
    \end{aligned}
\end{equation}
where $ \Lambda(s,t,x,y) = \widetilde{\mathbf{E}}\left[\exp\left((t-s)\int_{0}^{T}{G((1-u)x + uy + \sqrt{t-s}B_u)du}\right)\right], ~ G = -\dfrac{1}{2}(f^{2} + f^{\prime})$ and $B$ is a Brownian bridge with $\widetilde{\mathbf{E}}[B_t^2] = t(1-t)$ for all $t \in [0,T]$. From Equation~\eqref{eq:Phib}, we have
\begin{equation*}
    \Phi_f(X) = \dfrac{\exp\left(F_f(X)\right)}{\exp\left(F_f(X)\right) + 1} = \dfrac{1 + \widetilde{\xi}(X)}{2},
\end{equation*}
where the function $\widetilde{\xi}: \mathcal{X} \longrightarrow [-1,1]$ is given for any diffusion path $X \in \mathcal{X}$ by
\begin{equation}\label{eq:xitilde}
    \widetilde{\xi}(X) := \dfrac{\exp\left(F_f(X)\right) - 1}{\exp\left(F_f(X)\right) + 1},
\end{equation}
and $F_f(X)$ is given by Equation~\eqref{eq:Ff}. Denote by $\mathcal{X}^{+}$ and $\mathcal{X}^{-}$, two subsets of $\mathcal{X}$ defined as follows:
\begin{equation}\label{eq:xi-plus-less}
\mathcal{X}^{+} = \left\{X \in \mathcal{X}: \widetilde{\xi}(X) > 0\right\} ~~ \mathrm{and} ~~ \mathcal{X}^{-} = \left\{X \in \mathcal{X}: \widetilde{\xi}(X) \leq 0\right\}. 
\end{equation}
In a first step, we show that the two random events $\left\{X \in \mathcal{X}^{+}\right\}$ and $\{X \in \mathcal{X}^-\}$ are non-negligible. We have, on the one hand,
\begin{align*}
    \mathbb{P}_X\left(X \in \mathcal{X}^{+}\right) = &~ \mathbb{P}_X\left(\widetilde{\xi}(X) > 0\right) = \mathbb{P}_X\left(F_f(X) > 0\right) \geq \mathbb{P}_X\left(\int_{0}^{T}f(X_s)dW_s > 0\right).
\end{align*}
From \cite{dacunha1986estimation}, \textit{Lemma 2}, the random variable $\int_{0}^{T}f(X_s)dW_s$ admits the following density function
\begin{equation*}
    \phi(x) = \dfrac{1}{\sqrt{2\pi T}f(x)}\exp\left(-\dfrac{S(x)^2}{2T} + H(x)\right)\widetilde{\mathbf{E}}\left[\exp\left(T\int_{0}^{T}{G(ux + \sqrt{T}B_u)du}\right)\right],
\end{equation*}
where for all $x \in \mathbb{R}$,  $G(x) = -\dfrac{1}{8}(f^{\prime})^2\circ S^{-1}(x) + \dfrac{1}{4}(ff^{\prime\prime})\circ S^{-1}(x)$, $H(x) = -\dfrac{1}{2}\log\left(\frac{f(x)}{f(0)}\right)$ and $S(x) = \int_{0}^{x}\frac{1}{f(u)}du$. We deduce that
\begin{align*}
     \mathbb{P}_X\left(X \in \mathcal{X}^{+}\right) \geq &~ \int_{0}^{1}\phi(x)dx \leq \dfrac{\exp(-\|G\|_{\infty})}{\sqrt{2\pi T}\|f\|_{\infty}}\exp\left(-\|H\|_{\infty}\right)\exp\left(-\dfrac{\kappa^{-2}D^{2\beta}}{2T}\right)> 0.
\end{align*}
On the other hand, we have the following.
\begin{align*}
    \mathbb{P}_X\left(X \in \mathcal{X}^{-}\right) \geq &~ \mathbb{P}_X\left(\int_{0}^{T}f(X_s)dW_s \leq -\dfrac{T\|f\|_{\infty}^2}{2}\right)\\
    \geq &~ \dfrac{T\exp(-\|G\|_{\infty})\|f\|_{\infty}}{2\sqrt{2\pi T}}\exp\left(-\|H\|_{\infty}\right)\exp\left(-\dfrac{\kappa^{-2}D^{2\beta}}{2T}\right)> 0.
\end{align*}
We deduce that the two subsets $\mathcal{X}^{+}$ and $\mathcal{X}^{-}$ satisfy:
\begin{equation}\label{eq:criterion-xi}
    \mathcal{W}(\mathcal{X}^+) > 0, ~~ \mathcal{W}(\mathcal{X}^-) > 0, ~~ \mathcal{X}^+ \cap \mathcal{X}^- = \emptyset ~~ \mathrm{and} ~~ \mathcal{X}^+ \cup \mathcal{X}^- = \mathcal{X},
\end{equation}
where $\mathcal{W}$ is the Wiener measure. We derive the following lemma.

\begin{lemme}\label{lm:partition}
   Let $m, q \geq 1$ be two integers such that $q > m$, and $w \in (0,1)$ such that $wq \leq 1$. For each probability distribution $\mathbb{P}_X$ of the diffusion process $X$ characterized by the drift function $f \in \Sigma^{M}$, there exists a partition $\mathcal{X}_0, \mathcal{X}_1, \ldots, \mathcal{X}_m$ of $\mathcal{X}$ such that
\begin{itemize}
    \item For all $j \in \{1, \ldots, m\}, ~ \mathbb{P}_X(X \in \mathcal{X}_j) = w$,
    \item For all $j \in \{1, \ldots, m\}$, we have $\mathcal{X}_j \subset \mathcal{X}^{-}$ or $\mathcal{X}_j \subset \mathcal{X}^{+}$,
    \item For any $j \in \{1, \ldots, m\}$ and for any $X \in \mathcal{X}_j$, we have
    \begin{equation*}
        \Phi_{\vec{\sigma}, f}(X) = \mathbb{P}_{X,Y}(Y = 1 | X) = \dfrac{1 + \sigma_j\xi(X)}{2} = 1 - \mathbb{P}_{X,Y}(Y = 0 | X),
    \end{equation*}
    where $\xi = \left|\widetilde{\xi}\right|: \mathcal{X} \longrightarrow [0,1]$, and for each $j \in \{1, \ldots, m\}$, $\sigma_j = -1$ for $\mathcal{X}_j \subset \mathcal{X}^{-}$, and $\sigma_j = +1$ for $\mathcal{X}_j \subset \mathcal{X}^{+}$,
    \item For any $X \in \mathcal{X}_0$, $\Phi_{\vec{\sigma}, f}(X) = \mathbb{P}_{X,Y}(Y = 1 | X) = \dfrac{1 + \widetilde{\xi}(X)}{2}$.
\end{itemize}    
\end{lemme}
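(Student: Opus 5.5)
We would build the partition directly from the sets $\mathcal{X}^+$ and $\mathcal{X}^-$ of \eqref{eq:xi-plus-less}, using that the law of $X$ is non-atomic. The key fact is that $\mathbb{P}_X \circ X^{-1} \sim \mathcal{W}$ (Girsanov, item (ii) after Assumption~\ref{ass:hormander}). Since the Wiener measure has no atoms, $\mathbb{P}_X$ restricted to $\mathcal{X}^+$ or to $\mathcal{X}^-$ is a non-atomic finite measure. By \eqref{eq:criterion-xi}, both $\mathbb{P}_X(\mathcal{X}^+)$ and $\mathbb{P}_X(\mathcal{X}^-)$ are positive.

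The first step is to choose how many cells go into each sign region. Let $m^+ = \lfloor \mathbb{P}_X(\mathcal{X}^+)/w\rfloor$ and $m^- = \lfloor \mathbb{P}_X(\mathcal{X}^-)/w\rfloor$. The condition $wq\le 1$ with $q>m$ should be used to ensure $m^+ + m^- \ge m$, at least after taking $w$ small relative to $\min(\mathbb{P}_X(\mathcal{X}^\pm))$. This is the point where the hypotheses on $(m,q,w)$ enter. We then fix $m_1\le m^+$ and $m_2\le m^-$ with $m_1+m_2=m$.

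The second step uses the Sierpiński (Lyapunov) intermediate value property of non-atomic measures. We first extract a set $\mathcal{X}_1\subset\mathcal{X}^+$ with $\mathbb{P}_X(\mathcal{X}_1)=w$. We then repeat inside $\mathcal{X}^+\setminus\mathcal{X}_1$, which still has mass at least $w$ while fewer than $m^+$ cells have been taken, and continue until $m_1$ cells are obtained. We do the same inside $\mathcal{X}^-$ to obtain $m_2$ further cells. A concrete alternative to the abstract theorem is to use the continuous map $t\mapsto \mathbb{P}_X(\mathcal{X}^+\cap\{X_T\le t\})$, which is continuous because $X_T$ has a density. Its level sets then give the cells explicitly, and measurability is immediate. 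Finally, $\mathcal{X}_0$ is defined as the complement of $\mathcal{X}_1\cup\cdots\cup\mathcal{X}_m$. This yields the first two bullets.

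The third step identifies the regression function on each cell. We take $\xi=|\widetilde\xi|$ and set $\sigma_j=+1$ on cells contained in $\mathcal{X}^+$ and $\sigma_j=-1$ on cells contained in $\mathcal{X}^-$. Then $\frac{1+\sigma_j\xi(X)}{2}=\frac{1+\widetilde\xi(X)}{2}=\Phi_f(X)$ on $\mathcal{X}_j$, and on $\mathcal{X}_0$ the formula is exactly $\Phi_f$. For the hypercube, $\Phi_{\vec\sigma,f}$ is defined by replacing $\sigma_j$ by an arbitrary sign. The marginal law of $X$ is left unchanged, since only the conditional law of $Y$ given $X$ is modified.

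The main obstacle is the counting in the first step. We must make sure that enough mass of each sign is available, specifically $m w\le \mathbb{P}_X(\mathcal{X}^+)+\mathbb{P}_X(\mathcal{X}^-)-2w$, and that this is compatible with the later choice $w\asymp D^{-1}$, $m\asymp D$. Since $\mathbb{P}_X(\mathcal{X}^+)+\mathbb{P}_X(\mathcal{X}^-)=1$ and $mw< qw\le 1$, we would try to absorb the rounding loss by shrinking $w$ slightly, or by allowing one sign region to receive all remaining cells when the other is small. If that fails, $w$ would be replaced by $w\min(\mathbb{P}_X(\mathcal{X}^+),\mathbb{P}_X(\mathcal{X}^-))$, which changes only constants.
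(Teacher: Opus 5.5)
Your construction works, but the step you single out as the main obstacle is left open. The fallbacks you propose (shrinking $w$, or replacing $w$ by $w\min(\mathbb{P}_X(\mathcal{X}^+),\mathbb{P}_X(\mathcal{X}^-))$) are not allowed, because $w$ is a datum of the lemma. No fallback is needed: the hypotheses $q>m$, $wq\le 1$ are exactly what the counting requires. Put $a=\mathbb{P}_X(\mathcal{X}^+)$, so that $\mathbb{P}_X(\mathcal{X}^-)=1-a$. Since $q$ is an integer with $q\le 1/w$, you get $\lfloor 1/w\rfloor\ge q\ge m+1$. Then $\lfloor x\rfloor+\lfloor y\rfloor\ge\lfloor x+y\rfloor-1$ yields
\[
m^{+}+m^{-}=\left\lfloor \frac{a}{w}\right\rfloor+\left\lfloor \frac{1-a}{w}\right\rfloor\ \ge\ \left\lfloor \frac{1}{w}\right\rfloor-1\ \ge\ q-1\ \ge\ m .
\]
Your sufficient condition $mw\le 1-2w$ is too lossy precisely because it ignores that $q$ is an integer. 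The same computation shows you do not need positivity of both $\mathbb{P}_X(\mathcal{X}^{\pm})$ from \eqref{eq:criterion-xi}: if one region is light, all $m$ cells go into the other.

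With the counting settled, take $m_1=\min(m^+,m)$ and $m_2=m-m_1\le m^-$. Your slicing $\mathcal{X}^{\pm}\cap\{t_{k-1}<X_T\le t_k\}$, based on continuity of $t\mapsto\mathbb{P}_X(\mathcal{X}^{\pm}\cap\{X_T\le t\})$, then gives cells of mass exactly $w$ inside $\mathcal{X}^{\pm}$. The last two bullets are the sign bookkeeping you describe. If you want $\mathcal{X}^{\pm}$ to be genuine Borel subsets of $\mathcal{X}$ rather than sets defined through an a.s.\ stochastic integral, use It\^o's formula: $\int_0^T f(X_s)\,dX_s=\Theta(X_T)-\Theta(X_0)-\tfrac12\int_0^T f'(X_s)\,ds$ with $\Theta(x)=\int_0^x f(u)\,du$ gives a continuous version of $F_f$.

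The paper takes a different route. It discretizes and uses the density of $X^{(n)}=(X_{t_k^n})_k$ on $\mathbb{R}^n$. It places two product boxes $\mathcal{I}^{\pm}$ of equal probability inside the discretized sign regions $\mathcal{X}^{(n)\pm}$, which are defined through the Euler sum $F_f^{(n)}$. It cuts each box into slabs along one coordinate $X_{t_{i_0}}$ using its conditional marginal distribution function. Finally it appeals to $\sigma(X_{t_k},k\le n)\subset\mathcal{F}_T$ to return to continuous time.

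That route yields explicit rectangular cells, but it has two limitations:
\begin{itemize}
\item $w=w_0\,\mathbb{P}_X(X^{(n)}\in\mathcal{I}^{\pm})$ and the number of available cells are outputs of the construction, limited by the box masses. They are not the prescribed $w$ with an arbitrary $m<q\le 1/w$.
\item The cells lie in $\mathcal{X}^{(n)\pm}$, where the relevant sign is that of the Euler functional rather than of $\widetilde\xi$. So the second and third bullets for the continuous-time $\widetilde\xi$ are not really delivered by the closing $\sigma$-field remark.
\end{itemize}
Your argument works directly on path space and needs only that the law of $X_T$ (equivalently $\mathbb{P}_X$, via $\mathbb{P}_X\circ X^{-1}\sim\mathcal{W}$) is non-atomic. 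It places the cells exactly inside $\mathcal{X}^{\pm}$ with the given mass. The property $\mathcal{W}(\mathcal{X}_j)>0$, used afterwards in Lemma~\ref{lm:Bounds-b-bprime}, follows for your cells from the same equivalence.
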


We then obtain from the above lemma the following $(m, w, \mathfrak{b}, \mathfrak{b}^{\prime})-$hypercube:
\begin{align*}
  \mathcal{P}^M := \left\{\mathbb{P}_{X,Y} = \mathbb{P}_{\vec{\sigma}, f}, ~ \vec{\sigma} \in \{-1, +1\}^m, ~ f \in \Sigma^M\right\}
\end{align*}
containing $2^m$ probability distributions of the random couple $(X,Y)$ in the measurable space $\mathcal{X} \times \{0,1\}$ with the marginal distribution $\mathbb{P}_X$ of $X$ independent of $\vec{\sigma}$. The proof of Lemma~\ref{lm:partition} is provided in the appendix.
 
\subsubsection{Conclusion}

We deduce from Equations~\eqref{eq:lower-bound1}, \eqref{eq:lower-bound2}, \eqref{eq:lower-bound3} and \eqref{eq:lower-bound4} that
\begin{equation}\label{eq:lw1}
 \underset{\w{\bf f}}{\inf}\underset{{\bf f^*} \in {\bf F}^{M}}{\sup}\mathbb{E}_{\mathbb{P}^{\otimes N}}\left[R(g_{\w{\bf f}}) - R(g_{\bf f^*})\right] \geq \dfrac{1 - \mathfrak{b}\sqrt{Nw}}{2}mw\mathfrak{b}^{\prime},
\end{equation}
where 
\begin{align*}
    \mathfrak{b} = \sqrt{1 - \left(\mathbb{E}_X\left[\sqrt{1 - \xi^2(X)} \biggm\vert X \in \mathcal{X}_1\right]\right)^2}, ~~~ \mathfrak{b}^{\prime} = \mathbb{E}_X\left[\xi(X) | X \in \mathcal{X}_1\right].
\end{align*}

\begin{enumerate}
\item[(i)] \textbf{Focus on $\mathfrak{b}$ and $\mathfrak{b}^{\prime}$}

The following lemma provides an upper bound on $\mathfrak{b}$ and a lower bound on $\mathfrak{b}^{\prime}$.
\begin{lemme}\label{lm:Bounds-b-bprime}
    There exist constants $c_{\xi}, C_{\xi}, c,c^{\prime} > 0$ such that
    \begin{equation*}
    \begin{aligned}
        & \forall ~ X \in \mathcal{X}, ~ c_{\xi}D^{-\beta}\left|Q_{\widetilde{f}}(X)\right| \leq \xi(X) \leq C_{\xi}D^{-\beta}\left|Q_{\widetilde{f}}(X)\right|,\\
        & \mathfrak{b} \leq cD^{-\beta} ~~ \mathrm{and} ~~ \mathfrak{b}^{\prime} \geq c^{\prime}D^{-\beta},
    \end{aligned}
    \end{equation*}
    where $ Q_{\widetilde{f}}(X) = \int_{0}^{T}{\widetilde{f}(X_s)dW_s} + \dfrac{D^{-\beta}}{2}\int_{0}^{T}{\widetilde{f}^2(X_s)ds}$ and $\widetilde{f}(x) = \kappa + R\sum_{k=1}^{D}{\theta_kK\left(\dfrac{x-x_k}{D^{-1}}\right)}$.
\end{lemme}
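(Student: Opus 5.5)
The plan is to obtain the pointwise two-sided bound on $\xi$ first, and then derive the bounds on $\mathfrak{b}$ and $\mathfrak{b}^{\prime}$ from it by conditioning on $\mathcal{X}_1$.

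\textbf{Pointwise bound on $\xi$.} Since $f = D^{-\beta}\widetilde{f}$, where $\widetilde{f}$ is as in the statement with $\theta = \theta^j$, the model \eqref{eq:model2} gives $dX_s = Yf(X_s)ds + dW_s$. I interpret $F_f(X)$ along the class-one representation (the relevant one on the hypercube). Substituting into \eqref{eq:Ff} yields
\begin{equation*}
F_f(X) = D^{-\beta}\int_0^T\widetilde{f}(X_s)dW_s + \frac{D^{-2\beta}}{2}\int_0^T\widetilde{f}^2(X_s)ds = D^{-\beta}Q_{\widetilde{f}}(X).
\end{equation*}
By \eqref{eq:xitilde}, $\widetilde{\xi}(X) = \tanh(F_f(X)/2)$, so $\xi(X) = \tanh(D^{-\beta}|Q_{\widetilde{f}}(X)|/2)$. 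The upper bound $\xi \le \tfrac12 D^{-\beta}|Q_{\widetilde{f}}|$ then follows from $\tanh u \le u$. The lower bound requires $\tanh u \ge c\,u$, which holds for $u$ in a bounded range. I would therefore build the cells $\mathcal{X}_j$ of Lemma~\ref{lm:partition} inside the set $\{|Q_{\widetilde{f}}(X)| \le A\}$ for a fixed constant $A$. This is possible because $Q_{\widetilde{f}}$ has a continuous density, so this set has positive probability, and $w$ can be taken small enough that $qw$ fits inside it. On this set $\tanh u \ge \frac{\tanh(A/2)}{A/2}\,u$, which gives $c_\xi$. Since $\kappa \le \widetilde{f} \le \kappa + R\|K\|_\infty$ uniformly in $D$ and $\theta$, all constants are independent of $D$.

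\textbf{Bounds on $\mathfrak{b}$ and $\mathfrak{b}^{\prime}$.} For $\mathfrak{b}$, I would use $\sqrt{1-\xi^2} \ge 1-\xi^2$. Hence
\begin{equation*}
\mathfrak{b}^2 \le 1-(1-\mathbb{E}[\xi^2\mid\mathcal{X}_1])^2 \le 2\,\mathbb{E}[\xi^2\mid\mathcal{X}_1] \le 2C_\xi^2 D^{-2\beta}\,\mathbb{E}[Q_{\widetilde{f}}^2\mid\mathcal{X}_1].
\end{equation*}
The last expectation is bounded by $A^2$ on the truncated cell, or alternatively by $w^{-1}\mathbb{E}[Q^2]$, and $\mathbb{E}[Q^2]$ is bounded via the It\^o isometry and the boundedness of $\widetilde{f}$. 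Taking square roots gives $\mathfrak{b} \le cD^{-\beta}$. For $\mathfrak{b}^{\prime}$, I would use $\mathfrak{b}^{\prime} \ge c_\xi D^{-\beta}\,\mathbb{E}[|Q_{\widetilde{f}}|\mid\mathcal{X}_1]$, so it remains to keep $|Q_{\widetilde{f}}|$ bounded below on $\mathcal{X}_1$.

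\textbf{Main obstacle.} The hardest step is this lower bound on $\mathbb{E}[|Q_{\widetilde{f}}|\mid\mathcal{X}_1]$, uniformly in $D$. My first attempt is to refine the partition choice: take $\mathcal{X}_1 \subset \{a \le |Q_{\widetilde{f}}| \le A\}$ for fixed $0<a<A$. To see that such a choice is possible, note that $Q_{\widetilde{f}}$ has a bounded continuous density by Lemma~\ref{lm:bounded-density}, or by the Malliavin argument cited earlier for smooth $f$. Its law is non-degenerate, with variance at least $T\kappa^2 > 0$, so the band $\{a \le |Q| \le A\}$ has probability bounded below uniformly in $D$ once $a$ is small. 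The difficulty is that the density bound must also be uniform in $D$ and $\theta$. I would obtain this from the uniform bounds $\kappa \le \widetilde{f} \le \kappa + R\|K\|_\infty$, via a Gaussian-type comparison of $\int\widetilde{f}(X)dW$ with a time-changed Brownian motion whose clock $\int\widetilde{f}^2$ lies in $[T\kappa^2,\,T(\kappa+R\|K\|_\infty)^2]$. This yields $\mathfrak{b}^{\prime} \ge c_\xi a D^{-\beta}$.
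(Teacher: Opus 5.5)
Your proof is correct, and it takes a genuinely different route from the paper at each step.

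\textbf{How the two routes differ.} The paper derives the two-sided bound on $\xi$ from the Taylor--Young equivalence $\xi(X)\sim 2D^{-\beta}|Q_{\widetilde{f}}(X)|$ and then asserts constants valid for all $X$. It bounds $\mathfrak{b}$ through a second-order expansion of $\sqrt{1-\xi^2}$ and the estimate $\mathbb{E}_X[Q_{\widetilde{f}}^2\mid X\in\mathcal{X}_1]\le w^{-1}\mathbb{E}_X[Q_{\widetilde{f}}^2]$. It bounds $\mathbb{E}_X[|Q_{\widetilde{f}}|\mid X\in\mathcal{X}_1]$ from below for an \emph{arbitrary} cell, using the truncation event $\mathcal{B}$, the martingale $M_t=(\int_0^t\widetilde{f}(X_s)dW_s)^2-\int_0^t\widetilde{f}^2(X_s)ds$ with van de Geer's inequality, and Doob's inequality, all applied conditionally on $\{X\in\mathcal{X}_1\}$.

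You instead use the exact identity $\xi=\tanh(D^{-\beta}|Q_{\widetilde{f}}|/2)$, which gives the upper bound everywhere via $\tanh u\le u$. You then move the rest of the work into the choice of cells. Placing them in $\{a\le|Q_{\widetilde{f}}|\le A\}$ makes three things immediate: the lower bound on $\xi$, $\mathbb{E}[Q_{\widetilde{f}}^2\mid\mathcal{X}_1]\le A^2$, and $\mathfrak{b}'\ge c_\xi aD^{-\beta}$. The cost is a modification of the construction in Lemma~\ref{lm:partition}. This modification is compatible with that lemma, since $\{a\le Q_{\widetilde{f}}\le A\}\subset\mathcal{X}^+$, $\{-A\le Q_{\widetilde{f}}\le -a\}\subset\mathcal{X}^-$, and the law of $X$ is atomless. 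The other cost is a band-probability lower bound that must be uniform in $D$ and $\theta$.

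\textbf{What your route buys.} All constants become honestly independent of $D$, $\theta$ and $w$, which the paper's argument does not achieve. As stated for all $X\in\mathcal{X}$, the lower bound $c_\xi D^{-\beta}|Q_{\widetilde{f}}|\le\xi$ cannot hold, because $\xi\le 1$ while $Q_{\widetilde{f}}$ is unbounded. Moreover, no lower bound on $\mathbb{E}[|Q_{\widetilde{f}}|\mid X\in\mathcal{X}_1]$ can be uniform over arbitrary cells of mass $w$: a cell inside $\{|Q_{\widetilde{f}}|\le\epsilon\}$ gives $\mathfrak{b}'\le C_\xi\epsilon D^{-\beta}$. Conditioning on $\{X\in\mathcal{X}_1\}$ also invalidates the It\^o-isometry and Doob bounds; what survives carries a factor $w^{-1}$. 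So restricting the lower bound to the cells is not a weakness of your proof; it is the form of the statement that is both true and sufficient downstream.

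\textbf{Two small corrections.}
\begin{enumerate}
\item Discard your alternative bound $\mathbb{E}[Q_{\widetilde{f}}^2\mid\mathcal{X}_1]\le w^{-1}\mathbb{E}[Q_{\widetilde{f}}^2]$. It makes $c$ of order $w^{-1/2}$ with $w\to 0$, which would break $\mathfrak{b}\sqrt{Nw}\to 0$. This is exactly the step where the paper silently drops the $w^{-1}$.
\item For the band probability you do not need a uniform density bound. The Paley--Zygmund inequality already gives $\mathbb{P}(a\le|Q_{\widetilde{f}}|\le A)\ge c>0$ uniformly, under each class, since the drift term is $O(D^{-\beta})$. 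Use it with $\mathbb{E}[(\int_0^T\widetilde{f}(X_s)dW_s)^2]\ge T\kappa^2$, a Burkholder--Davis--Gundy bound on the fourth moment, and Chebyshev for the upper tail. Your time-change idea also works: $\mathbb{P}(\inf_{s\in[\tau_{\min},\tau_{\max}]}|B_s|<a)$ stays bounded away from $1$ for small $a$.
\end{enumerate}
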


Next, we choose $mw$ so that $mw \rightarrow 0$ at the slowest possible rate. The proof of the above lemma is provided in the appendix.

\item[(ii)] \textbf{Low-noise conditions}

Let $\varepsilon \in (0, 1/8)$. For all $f \in \Sigma^M$, we have the following.
\begin{multline}\label{eq:margin1}
    \mathbb{P}_{X}\left(\left|\Phi_{\vec{\sigma}, f}(X) - \dfrac{1}{2}\right| \leq \varepsilon\right) = \sum_{j=0}^{m}{\mathbb{P}_{X}\left(\left|\Phi_{\vec{\sigma}, f}(X) - \dfrac{1}{2}\right| \leq \varepsilon \biggm\vert X \in \mathcal{X}_j\right)\mathbb{P}\left(X \in \mathcal{X}_j\right)}\\
    = w\sum_{j=1}^{m}{\mathbb{P}_{X}\left(\xi(X) \leq \varepsilon \biggm\vert X \in \mathcal{X}_j\right)} + \mathbb{P}_{X}\left(\left\{\left|\Phi_{\vec{\sigma}, f}(X) - \dfrac{1}{2}\right| \leq \varepsilon\right\}\cap\left\{X \in \mathcal{X}_0\right\}\right).
\end{multline}
By Proposition~\ref{prop:margin-condition}, there exists a constant $C>0$ such that for all $f \in \Sigma^M$,
\begin{equation}\label{eq:margin2}
   \mathbb{P}_{X}\left(\left\{\left|\Phi_{\vec{\sigma}, f}(X) - \dfrac{1}{2}\right| \leq \varepsilon\right\}\cap\left\{X \in \mathcal{X}_0\right\}\right) \leq C\varepsilon.
\end{equation}
On the other hand, from Lemma~\ref{lm:Bounds-b-bprime}, for all $j \in [\![1, m]\!]$, we have
\begin{align*}
    \mathbb{P}_{X}\left(\xi(X) \leq \varepsilon \biggm\vert X \in \mathcal{X}_j\right) \leq &~ \mathbb{P}_{X}\left(\left|Q_{\widetilde{f}}(X)\right| \leq c_{\xi}^{-1}\varepsilon D^{\beta} \biggm\vert X \in \mathcal{X}_j\right),
\end{align*}
where $Q_{\widetilde{f}}(X) = \int_{0}^{T}\widetilde{f}(X_t)dW_t + \frac{D^{-\beta}}{2}\int_{0}^{T}\widetilde{f}^2(X_t)dt$. We deduce that
\begin{equation}\label{eq:margin3}
    \mathbb{P}_{X}\left(\xi(X) \leq \varepsilon \biggm\vert X \in \mathcal{X}_j\right) \leq  \mathbb{P}_{X}\left(\left|\int_{0}^{T}\widetilde{f}(X_t)dW_t\right| \leq c_{\xi}^{-1}\varepsilon D^{\beta} + \dfrac{R\|K\|_{\infty}D^{-\beta}}{2} \biggm\vert X \in \mathcal{X}_j\right).
\end{equation}
Moreover, since $\widetilde{f}$ is an elliptic function, the random variable $\int_{0}^{T}\widetilde{f}(X_t)dW_t$ has a density function $\phi_T$ that can be approximated by Gaussian densities (see, \textit{e.g.} \cite{gobet2002lan}, \textit{Proposition 1.2}). More precisely, there exist constants $c_T, C_T>0$ depending on $T$ such that for all $x \in \mathbb{R}$, $\phi_T(x) \leq C_T\exp\left(-c_Tx^2\right)$. Setting $c_{\beta} = c_{\xi}^{-1}\varepsilon D^{\beta} + R\|K\|_{\infty}D^{-\beta}/2$. We deduce from Equation~\eqref{eq:margin3} that
\begin{equation}\label{eq:margin4}
    \mathbb{P}_{X}\left(\xi(X) \leq \varepsilon \biggm\vert X \in \mathcal{X}_j\right) \leq \int_{-c_{\beta}}^{c_{\beta}}\phi_T(x)dx \leq 2C_Tc_{\beta} = 2C_T\left(c_{\xi}^{-1}\varepsilon D^{\beta} + \dfrac{R\|K\|_{\infty}D^{-\beta}}{2} \right).
\end{equation}
From Equations~\eqref{eq:margin4}, \eqref{eq:margin2} and \eqref{eq:margin1}, we deduce that
\begin{equation*}
   \mathbb{P}_{X}\left(\left|\Phi_{\vec{\sigma}, f}(X) - \dfrac{1}{2}\right| \leq \varepsilon\right) \leq 2C_Tmw\left(c_{\xi}^{-1}\varepsilon D^{\beta} + \dfrac{R\|K\|_{\infty}D^{-\beta}}{2} \right) + C\varepsilon.
\end{equation*}
By Proposition~\ref{prop:margin-condition}, we have $\mathbb{P}_{X}\left(\left|\Phi_{\vec{\sigma}, f}(X) - \dfrac{1}{2}\right| \leq \varepsilon\right) = \mathrm{O}(\varepsilon)$, which implies that $mwD^{\beta} = \mathrm{O}\left(1\right)$ and $mwD^{-\beta} = \mathrm{O}(\varepsilon)$. Thus, low-noise conditions imply that $mw$ cannot tend to zero with a rate that is slower than $D^{-\beta}$.

\item[(iii)] \textbf{Lower bound on the excess risk}

We set $mw = D^{-\beta}$ and deduce from Equation~\eqref{eq:lw1} and Lemma~\ref{lm:Bounds-b-bprime} that 
\begin{equation*}
 \underset{\w{\bf f}}{\inf}\underset{{\bf f^*} \in {\bf F}^{M}}{\sup}\mathbb{E}_{\mathbb{P}^{\otimes N}}\left[R(g_{\w{\bf f}}) - R(g_{\bf f^*})\right] \geq c^{\prime}\dfrac{1 - cD^{-\beta}\sqrt{Nw}}{2}D^{-2\beta}.
\end{equation*}
\end{enumerate}
To conclude the proof, we remark that since $mw = D^{-\beta}$ and $D = \left\lfloor N^{1/(2\beta+1)} \right\rfloor$, we have $D^{-\beta}\sqrt{Nw} \rightarrow 0$ as $N$ tends to infinity. Finally, there exists a $c>0$ depending on $\beta$ such that
\begin{equation*}
 \underset{\w{\bf f}}{\inf}\underset{{\bf f^*} \in {\bf F}^{M}}{\sup}\mathbb{E}_{\mathbb{P}^{\otimes N}}\left[R(g_{\w{\bf f}}) - R(g_{\bf f^*})\right] \geq cN^{-2\beta/(2\beta+1)}.
\end{equation*}
\end{proof}

\section*{Declarations}

\paragraph{Conflict of interest} I have no conflict of interest to declare that is relevant to the content of this article. No
funding was received to assist with the preparation of this document.

\bibliographystyle{ScandJStat}
\bibliography{mabiblio.bib}

@book{anderson1958introduction,
  title={An introduction to multivariate statistical analysis},
  author={Anderson, Theodore Wilbur},
  volume={2},
  year={1958},
  publisher={Wiley New York}
}

@inproceedings{Audibert2004ClassificationUP,
  title={Classification under polynomial entropy and margin assump-tions and randomized estimators},
  author={Jean-Yves Audibert},
  booktitle={Preprint, Laboratoire de Probabilit\'es et Mod\`eles Al\'eatoires, Univ. Paris VI and VII},
  volume={908},
  year={2004},
  url={https://imagine.enpc.fr/publications/papers/04PMA-908.pdf}
}

@article{audibert2007fast,
  title={Fast learning rates for plug-in classifiers},
  author={Audibert, J.-Y. and Tsybakov, A.-B. and others},
  journal={Ann. Statist.},
  volume={35},
  number={2},
  pages={608--633},
  year={2007},
  publisher={Institute of Mathematical Statistics}
}

@article{baillo2011classification,
  title={Classification methods for functional data},
  author={Ba{\'\i}llo, A. and Cuevas, A. and Fraiman, R.},
  journal={The Oxford handbook of functional data analysis},
  year={2011}
}

@article{bartlett2006convexity,
  title={Convexity, classification, and risk bounds},
  author={Bartlett, Peter L and Jordan, Michael I and McAuliffe, Jon D},
  journal={Journal of the American Statistical Association},
  volume={101},
  number={473},
  pages={138--156},
  year={2006},
  publisher={Taylor \& Francis}
}

@article{cadre2013supervised,
  title={Supervised classification of diffusion paths},
  author={Cadre, B.},
  journal={Math. Methods Statist.},
  volume={22},
  number={3},
  pages={213--225},
  year={2013},
  publisher={Springer}
}

@book{comte2017nonparametric,
  title={Nonparametric Estimation},
  author={Comte, F.},
  year={2017},
  publisher={Spartacus IDH}
}

@article{comte2020nonparametric,
  title={Nonparametric drift estimation for i.i.d. paths of stochastic differential equations},
  author={Comte, F. and Genon-Catalot, V.},
  journal={Ann. Statist.},
  volume={48},
  number={6},
  pages={3336--3365},
  year={2020},
  publisher={Institute of Mathematical Statistics}
}

@article{cover1967nearest,
  title={Nearest neighbor pattern classification},
  author={Cover, Thomas and Hart, Peter},
  journal={IEEE transactions on information theory},
  volume={13},
  number={1},
  pages={21--27},
  year={1967},
  publisher={IEEE}
}

@book{crow2017introduction,
  title={An introduction to population genetics theory},
  author={Crow, James Franklin},
  year={2017},
  publisher={Scientific Publishers}
}

@article{cuevas2007robust,
  title={Robust estimation and classification for functional data via projection-based depth notions},
  author={Cuevas, A. and Febrero, M. and Fraiman, R.},
  journal={Comput. Statist.},
  volume={22},
  number={3},
  pages={481--496},
  year={2007},
  publisher={Springer}
}

@article{denis2020classif,
  title={Consistent procedures for multiclass classification of discrete diffusion paths},
  author={Denis, C. and Dion-Blanc, C. and Martinez, M.},
  journal={Scand. J. Stat.},
  volume={47},
  number={2},
  pages={516--554},
  year={2020}
}

@article{denis2021ridge,
  title={A ridge estimator of the drift from discrete repeated observations of the solution of a stochastic differential equation},
  author={Denis, Christophe and Dion-Blanc, Charlotte and Martinez, Miguel},
  journal={Bernoulli},
  volume={27},
  number={4},
  pages={2675--2713},
  year={2021},
  publisher={Bernoulli Society for Mathematical Statistics and Probability}
}

@article{dacunha1986estimation,
  title={Estimation of the coefficients of a diffusion from discrete observations},
  author={Dacunha-Castelle, Didier and Florens-Zmirou, Danielle},
  journal={Stochastics: An International Journal of Probability and Stochastic Processes},
  volume={19},
  number={4},
  pages={263--284},
  year={1986},
  publisher={Taylor \& Francis}
}

@article{delaigle2012achieving,
  title={Achieving near perfect classification for functional data},
  author={Delaigle, Aurore and Hall, Peter},
  journal={Journal of the Royal Statistical Society Series B: Statistical Methodology},
  volume={74},
  number={2},
  pages={267--286},
  year={2012},
  publisher={Oxford University Press}
}

@article{denis2024nonparametric,
  title={Nonparametric plug-in classifier for multiclass classification of {SDE} paths},
  author={Denis, Christophe and Dion-Blanc, Charlotte and Ella-Mintsa, Eddy and Tran, Viet Chi},
  journal={Scandinavian Journal of Statistics},
  volume={51},
  number={3},
  pages={1103--1160},
  year={2024},
  publisher={Wiley Online Library}
}

@article{denis2025empirical,
  title={Empirical risk minimization algorithm for multiclass classification of {SDE} paths},
  author={Denis, Christophe and Ella-Mintsa, Eddy-Michel},
  journal={arXiv preprint arXiv:2503.14045},
  year={2025}
}

@book{devroye2013probabilistic,
  title={A probabilistic theory of pattern recognition},
  author={Devroye, Luc and Gy{\"o}rfi, L{\'a}szl{\'o} and Lugosi, G{\'a}bor},
  volume={31},
  year={2013},
  publisher={Springer Science \& Business Media}
}

@article{devroye1980distribution,
  title={Distribution-free consistency results in nonparametric discrimination and regression function estimation},
  author={Devroye, Luc P and Wagner, TJ},
  journal={The Annals of Statistics},
  pages={231--239},
  year={1980},
  publisher={JSTOR}
}

@article{el1997backward,
  title={Backward stochastic differential equations in finance},
  author={El Karoui, N. and Peng, S. and Quenez, M. -C.},
  journal={Math. Finance},
  volume={7},
  number={1},
  pages={1--71},
  year={1997},
  publisher={Wiley Online Library}
}

@article{ella2024nonparametric,
  title={Nonparametric estimation of the diffusion coefficient from iid {SDE} paths},
  author={Ella-Mintsa, Eddy},
  journal={Statistical Inference for Stochastic Processes},
  volume={27},
  number={3},
  pages={585--640},
  year={2024},
  publisher={Springer}
}

@article{ella2025minimax,
  title={Minimax rates of convergence for the nonparametric estimation of the diffusion coefficient from time-homogeneous SDE paths},
  author={Ella-Mintsa, Eddy-Michel},
  journal={Statistical Inference for Stochastic Processes},
  volume={28},
  number={3},
  pages={17},
  year={2025},
  publisher={Springer}
}

@article{ella2026minimax,
  title={Minimax convergence rates of a binary classification procedure for time-homogeneous SDE paths},
  author={Ella-Mintsa, Eddy-Michel},
  journal={Metrika},
  pages={1--48},
  year={2026},
  publisher={Springer}
}

@article{ferraty2003curves,
  title={Curves discrimination: a nonparametric functional approach},
  author={Ferraty, Fr{\'e}d{\'e}ric and Vieu, Philippe},
  journal={Computational Statistics \& Data Analysis},
  volume={44},
  number={1-2},
  pages={161--173},
  year={2003},
  publisher={Elsevier}
}

@article{fisher1936use,
  title={The use of multiple measurements in taxonomic problems},
  author={Fisher, Ronald A},
  journal={Annals of eugenics},
  volume={7},
  number={2},
  pages={179--188},
  year={1936},
  publisher={Wiley Online Library}
}

@article{gadat2020optimal,
  title={Optimal functional supervised classification with separation condition},
  author={Gadat, S. and Gerchinovitz, S. and Marteau, C.},
  journal={Bernoulli},
  volume={26},
  number={3},
  pages={1797--1831},
  year={2020},
  publisher={Bernoulli Society for Mathematical Statistics and Probability}
}

@article{gobet2002lan,
  title={LAN property for ergodic diffusions with discrete observations},
  author={Gobet, E.},
  journal={Ann. Inst. Henri Poincar\'e Probab. Stat.},
  volume={38},
  number={5},
  pages={711--737},
  year={2002},
  publisher={Elsevier}
}

@book{gyorfi2006distribution,
  title={A distribution-free theory of nonparametric regression},
  author={Gy{\"o}rfi, L. and Kohler, M. and Krzyzak, A. and Walk, H.},
  year={2006},
  publisher={Springer Science \& Business Media}
}

@book{hardle2012wavelets,
  title={Wavelets, approximation, and statistical applications},
  author={H{\"a}rdle, W. and Kerkyacharian, G. and Picard, D. and Tsybakov, A.},
  volume={129},
  year={2012},
  publisher={Springer Science \& Business Media}
}

@article{hyndman2009forecasting,
  title={Forecasting functional time series},
  author={Hyndman, Rob J and Shang, Han Lin},
  journal={Journal of the Korean Statistical Society},
  volume={38},
  number={3},
  pages={199--211},
  year={2009},
  publisher={Elsevier}
}

@article{james2001functional,
  title={Functional linear discriminant analysis for irregularly sampled curves},
  author={James, Gareth M and Hastie, Trevor J},
  journal={Journal of the Royal Statistical Society Series B: Statistical Methodology},
  volume={63},
  number={3},
  pages={533--550},
  year={2001},
  publisher={Oxford University Press}
}

@book{karatzas2014brownian,
	title={Brownian motion and stochastic calculus},
	author={Karatzas, I. and Shreve, S.},
	volume={113},
	year={2014},
	publisher={springer}
}

@book{kazamaki2006continuous,
  title={Continuous exponential martingales and BMO},
  author={Kazamaki, Norihiko},
  year={2006},
  publisher={Springer}
}

@book{lamberton2011introduction,
  title={Introduction to stochastic calculus applied to finance},
  author={Lamberton, Damien and Lapeyre, Bernard},
  year={2011},
  publisher={Chapman and Hall/CRC}
}

@article{mammen1999smooth,
  title={Smooth discrimination analysis},
  author={Mammen, Enno and Tsybakov, Alexandre B},
  journal={The Annals of Statistics},
  volume={27},
  number={6},
  pages={1808--1829},
  year={1999},
  publisher={Institute of Mathematical Statistics}
}

@article{marie2023nadaraya,
  title={Nadaraya--Watson estimator for IID paths of diffusion processes},
  author={Marie, Nicolas and Rosier, Am{\'e}lie},
  journal={Scandinavian Journal of Statistics},
  volume={50},
  number={2},
  pages={589--637},
  year={2023},
  publisher={Wiley Online Library}
}

@book{massart2007concentration,
  title={Concentration inequalities and model selection: Ecole d'Et{\'e} de Probabilit{\'e}s de Saint-Flour XXXIII-2003},
  author={Massart, Pascal},
  year={2007},
  publisher={Springer}
}

@book{mclachlan2005discriminant,
  title={Discriminant analysis and statistical pattern recognition},
  author={McLachlan, Geoffrey J},
  year={2005},
  publisher={John Wiley \& Sons}
}

@article{nagai1983asymptotic,
  title={Asymptotic behavior for a nonlinear degenerate diffusion equation in population dynamics},
  author={Nagai, Toshitaka and Mimura, Masayasu},
  journal={SIAM Journal on Applied Mathematics},
  volume={43},
  number={3},
  pages={449--464},
  year={1983},
  publisher={SIAM}
}

@book{nualart2006malliavin,
  title={The Malliavin calculus and related topics},
  author={Nualart, David},
  year={2006},
  publisher={Springer}
}

@book{ramsay2005fitting,
  title={Fitting differential equations to functional data: Principal differential analysis},
  author={Ramsay, J.-O. and Silverman, B.-W.},
  year={2005},
  publisher={Springer}
}

@article{rao1948utilization,
  title={The utilization of multiple measurements in problems of biological classification},
  author={Rao, C Radhakrishna},
  journal={Journal of the Royal Statistical Society. Series B (Methodological)},
  volume={10},
  number={2},
  pages={159--203},
  year={1948},
  publisher={JSTOR}
}

@book {revuzyor1999,
    AUTHOR = {Revuz, D. and Yor, M.},
     TITLE = {Continuous martingales and {B}rownian motion},
    SERIES = {Grundlehren der mathematischen Wissenschaften},
    VOLUME = {293},
   EDITION = {Third},
 PUBLISHER = {Springer-Verlag, Berlin},
      YEAR = {1999}
}

@book{tsybakov2008introduction,
  title={Introduction to nonparametric estimation},
  author={Tsybakov, A.-B.},
  year={2008},
  publisher={Springer Science \& Business Media}
}

@inproceedings{tukey1975mathematics,
  title={Mathematics and the picturing of data},
  author={Tukey, John W},
  booktitle={Proceedings of the international congress of mathematicians},
  volume={2},
  pages={523--531},
  year={1975},
  organization={Vancouver}
}

@article{van1995exponential,
  title={Exponential inequalities for martingales, with application to maximum likelihood estimation for counting processes},
  author={Van-de-Geer, S.},
  journal={Ann. Statist.},
  pages={1779--1801},
  year={1995},
  publisher={JSTOR}
}

@article{wang2023deep,
  title={Deep neural network classifier for multidimensional functional data},
  author={Wang, Shuoyang and Cao, Guanqun and Shang, Zuofeng and Alzheimer's Disease Neuroimaging Initiative},
  journal={Scandinavian Journal of Statistics},
  volume={50},
  number={4},
  pages={1667--1686},
  year={2023},
  publisher={Wiley Online Library}
}

@article{wang2024functional,
  title={Functional data analysis using deep neural networks},
  author={Wang, Shuoyang and Zhang, Wanyu and Cao, Guanqun and Huang, Yuan},
  journal={Wiley Interdisciplinary Reviews: Computational Statistics},
  volume={16},
  number={4},
  pages={e70001},
  year={2024},
  publisher={Wiley Online Library}
}

@article{Yang99,
  Author = {Yang, Y.},
  Journal = {IEEE Transactions on Information Theory},
  Number = {7},
  Pages = {2271-2284},
  Title = {Minimax nonparametric classification: Rates of convergence},
  Volume = {45},
  Year = {1999}}

\section*{Appendix}

\subsection*{Proof of Lemma~\ref{lm:bounded-density}}

\begin{proof}
The proof of the Lemma relies on Malliavin calculus. First, we have, on the one hand
\begin{equation}\label{eq:case1}
    \mathbb{E}_X\left[Z_T^2\right] = \mathbb{E}_X\left[\int_{0}^{T}(b_1^* - b_0^*)^2(X_s)ds\right] \leq 2T\left(\left\|b_0^*\right\|_{\infty}^2 + \left\|b_1^*\right\|_{\infty}^2\right) < \infty.
\end{equation} 
 On the other hand, under Assumptions~\ref{ass:Reg} and \ref{ass:hormander}, since $\mu\left(\mathfrak{B}_{\bf b^*}\right) > 0$, there exists a compact interval $I \subset \mathfrak{B}_{\bf b^*}$ such that 
$\mu(I) > 0$ and $\mathrm{and} ~~ \underset{x \in I}{\inf}\left|b_1^*(x) - b_0^*(x)\right| > 0$, and we obtain
\begin{equation}\label{eq:case2}
    \begin{aligned}
    \mathbb{E}_X\left[Z_T^2\right] \geq &~ \mathbb{E}_X\left[\int_{0}^{T}(b_1^* - b_0^*)^2(X_s)\mathds{1}_{X_s \in I}ds\right] \geq \underset{x \in I}{\inf}\left|b_1^*(x) - b_0^*(x)\right|^2\int_{0}^{T}\mathbb{P}_X\left(X_s \in I\right)ds > 0.
\end{aligned}
\end{equation}
Second, under Assumption~\ref{ass:Reg}, using the Malliavin derivative operator $D$ on the random variable $Z_T$, we obtain from \cite{nualart2006malliavin}, Chapter 2, Exercise 2.2.1, p.124, that for all $t \in [0,T]$,
\begin{equation}\label{eq:case3}
    D_tZ_T = (b_1^* - b_0^*)(X_t)\exp\left(\int_{0}^{T}(b_1^{*\prime} - b_0^{*\prime})(X_s)dW_s - \dfrac{1}{2}\int_{0}^{T}(b_1^{*\prime} - b_0^{*\prime})^2(X_s)ds\right),
\end{equation}
and under Assumption~\ref{ass:Reg} and from \cite{kazamaki2006continuous}, Chapter 1, Theorems 1.2 and 1.6, p.3-9,  
\begin{equation}\label{eq:case4}
    \begin{aligned}
    & \mathbb{E}_X\left[\int_{0}^{T}|D_tZ_T|^2dt\right] \\
    & = \mathbb{E}_X\left[\int_{0}^{T}(b_1^* - b_0^*)^2(X_t)\exp\left(\int_{0}^{T}2(b_1^{*\prime} - b_0^{*\prime})(X_s)dW_s - \int_{0}^{T}(b_1^{*\prime} - b_0^{*\prime})^2(X_s)ds\right)dt\right]\\
    &\leq 2T\left(\left\|b_0^*\right\|_{\infty}^2 + \left\|b_1^*\right\|_{\infty}^2\right)\exp\left(2T\left[\left\|b_0^{*\prime}\right\|_{\infty}^2 + \left\|b_1^{*\prime}\right\|_{\infty}^2\right]\right) < \infty.
\end{aligned}
\end{equation}
It remains to verify that $\int_{0}^{T}|D_tZ_T|^2dt > 0 ~ a.s.$ To this end, note that from Equation~\eqref{eq:case3},
\begin{equation*}
    D_0Z_T = (b_1^* - b_0^*)(x_0)\exp\left(\int_{0}^{T}(b_1^{*\prime} - b_0^{*\prime})(X_s)dW_s - \dfrac{1}{2}\int_{0}^{T}(b_1^{*\prime} - b_0^{*\prime})^2(X_s)ds\right).
\end{equation*}
Thus, reasoning by contradiction, we have
\begin{align*}
    \int_{0}^{T}|D_tZ_T|^2dt = 0 ~~ a.s. \iff &~ \mathbb{E}_X\left[\int_{0}^{T}|D_tZ_T|^2dt\right] = 0\\
    \iff &~ \forall ~ t \in [0,T], ~~ \mathbb{E}_X(|D_tZ_T|) = 0\\
    \Longrightarrow &~  \mathbb{E}_X(|D_0Z_T|) = \left|(b_1^* - b_0^*)(x_0)\right| = 0\\
    \Longrightarrow &~ b_0^*(x_0) = b_1^*(x_0).
\end{align*}
The last equality is a contradiction under Assumption~\ref{ass:hormander}. We deduce that 
\begin{equation}\label{eq:case5}
    \int_{0}^{T}|D_tZ_T|^2dt > 0 ~~ a.s. 
\end{equation}
Then, from Equations~\eqref{eq:case1} and \eqref{eq:case3}, $Z_T$ belongs to the domain of the Malliavin derivative operator $D$ denoted by $\mathbb{D}^{1,2}$ (see \cite{nualart2006malliavin}, Chapter 1, p.27). In addition, setting $DZ_T = (D_tZ_T)_{t \in [0,T]}$, $DZ_T\left(\int_{0}^{T}\left|D_tZ_T\right|^2dt\right)^{-1}$ is well defined and belongs to the domain $\mathrm{Dom}(\delta)$ of the divergence operator $\delta$, adjoint operator of $D$, since from Equations~\eqref{eq:case2} and \eqref{eq:case5}, there exists a constant $c>0$ such that
\begin{equation*}
    \left|\int_{0}^{T}D_tZ_T \times D_tZ_T\left(\int_{0}^{T}\left|D_tZ_T\right|^2dt\right)^{-1}dt\right| = 1 \leq c\left\|Z_T\right\|_{2},
\end{equation*}
where $\|.\|_{2}$ is the norm on $\mathbb{D}^{1,2}$ and $\left\|Z_T\right\|_{2}^2 := \mathbb{E}_X\left[|Z_T|^2\right] + \mathbb{E}_X\left[\int_{0}^{T}\left|D_tZ_T\right|^2dt\right] > 0$ 
(see \cite{nualart2006malliavin}, Chapter 1, Definition 1.3.1, p.36-37). We finally conclude from \cite{nualart2006malliavin}, Chapter 2, Proposition 2.1.1, p.86, that the random variable $Z_T$ has a continuous and bounded density. 
\end{proof}

\subsection*{Proof of Proposition~\ref{prop:margin-condition}}

\begin{proof}
Recall that $\Phi_{\bf f^*}(X) = \dfrac{p_1^*\exp\left(F_{\bf b^*}^1(X)\right)}{p_0^*\exp\left(F_{\bf b^*}^0(X) + p_1^*\exp\left(F_{\bf b^*}^1(X)\right)\right)}$. For all $\varepsilon \in (0, 1/8)$,
\begin{equation}\label{eq:margin0}
    \begin{aligned}
        &~ \mathbb{P}_{X}\left(0 < \left|\Phi_{\bf f^*}(X) - \dfrac{1}{2}\right| \leq \varepsilon\right)\\
        &~ = \mathbb{P}_{X}\left(\left|p_1^*\exp\left(F_{\bf b^*}^1(X)\right) - p_0^*\exp\left(F_{\bf b^*}^0(X)\right)\right| \leq 2\varepsilon \left[p_1^*\exp\left(F_{\bf b^*}^1(X)\right) + p_0^*\exp\left(F_{\bf b^*}^0(X)\right)\right]\right)\\
        &~ = T_{1, \bf f^*} + T_{2, \bf f^*},
    \end{aligned}
\end{equation}
with,
\begin{align*}
    T_{1, \bf f^*} = &~ \mathbb{P}_{X}\left(\left\{\exp\left(F_{\bf b^*}^1(X) - F_{\bf b^*}^0(X)\right) \leq \dfrac{1+2\varepsilon}{1-2\varepsilon} \times \dfrac{p_0^*}{p_1^*}\right\} \cap \left\{\exp\left(F_{\bf b^*}^1(X) - F_{\bf b^*}^1(X)\right) \geq \dfrac{p_0^*}{p_1^*}\right\}\right),\\
    T_{2, \bf f^*} = &~ \mathbb{P}_{X}\left(\left\{\exp\left(F_{\bf b^*}^1(X) - F_{\bf b^*}^0(X)\right) \geq \dfrac{1-2\varepsilon}{1+2\varepsilon} \times \dfrac{p_0^*}{p_1^*}\right\} \cap \left\{\exp\left(F_{\bf b^*}^1(X) - F_{\bf b^*}^0(X)\right) < \dfrac{p_0^*}{p_1^*}\right\}\right).
\end{align*}

\subsection*{(1) Upper-bound on $T_{1, \bf f^*}$}

For all $\varepsilon \in (0,1/8)$, we have the following:
\begin{equation}\label{eq:T1-bound0}
    \begin{aligned}
        T_{1, \bf f^*} = &~ \mathbb{P}_{X}\left(\log\left(\dfrac{p_0^*}{p_1^*}\right) \leq F_{\bf b^*}^1(X) - F_{\bf b^*}^0(X) \leq \log\left(\dfrac{1+2\varepsilon}{1-2\varepsilon} \times \dfrac{p_0^*}{p_1^*}\right)\right),
    \end{aligned}
\end{equation}
Set $\phi_{1, \bf b^*} = b_1^* - b_0^*, ~~ \phi_{2, \bf b^*} = b_1^{*2} - b_0^{*2}$. We obtain 
$$F_{\bf b^*}^1(X) - F_{\bf b^*}^0(X) =  \int_{0}^{T}{\phi_{1,\bf b^*}^*(X_s)dW_s} + \frac{1}{2}\int_{0}^{T}{\phi_{2,\bf b^*}^*(X_s)ds}.$$ 
Under Assumption~\ref{ass:Reg},  there exist constants $\underline{C}$ and $\overline{C}$ such that $ \underline{C} \leq \frac{1}{2}\int_{0}^{T}{\phi_{2, \bf b^*}^*(X_s)ds}\leq \overline{C}$. Consider the subdivision $\{t_k = \underline{C} + k(\overline{C} - \underline{C})/m, ~ k = 0, \ldots, m\}$ of the compact interval $[\underline{C}, \overline{C}]$ with $m \rightarrow \infty$. Set $V_{\bf b^*}(X) = \frac{1}{2}\int_{0}^{T}{\phi_2^*(X_s)ds}$. We deduce from Equation~\eqref{eq:T1-bound0} that for all $\varepsilon \in (0, 1/8)$,
\begin{align*}
     & T_{1, \bf f^*} = \sum_{k=1}^{m}{\mathbb{P}_{X}\left(A_k\leq \int_{0}^{T}{\phi_{1, \bf b^*}(X_s)dW_s} \leq B_k \biggm\vert V_{\bf b^*}(X) \in [t_k, t_{k+1}]\right)\mathbb{P}_{X}\left(V_{\bf b^*}(X) \in [t_k, t_{k+1}]\right)},
\end{align*}
where for all $k \in \{1, \ldots, m\}$, $ A_k = \log\left(\frac{p_0^*}{p_1^*}\right) - t_{k+1}, ~~ B_k = \log\left(\frac{1+2\varepsilon}{1-2\varepsilon} \times \frac{p_0^*}{p_1^*}\right) - t_k, ~~ \varepsilon \in (0, 1/8)$.
By Lemma~\ref{lm:bounded-density} and the assumptions therein, the random variable $\int_{0}^{T}{\phi_{1, \bf b^*}(X_s)dW_s}$ has a continuous and bounded density function $\Gamma$. Then, for all ${\bf f^*} \in {\bf F} (\beta, R)$, we obtain the following.
\begin{equation}\label{eq:T1-bound2}
    \begin{aligned}
        T_{1, \bf f^*} = &~ \sum_{k=1}^{m}{\left(\int_{A_k}^{B_k}{\Gamma(x)dx}\right)\mathbb{P}_{X}\left(V_{\bf b^*}(X) \in [t_k, t_{k+1}]\right)} \leq \left\|\Gamma\right\|_{\infty}\sum_{k=1}^{m}{(B_k - A_k)\mathbb{P}_{X}\left(V_{\bf b^*}(X) \in [t_k, t_{k+1}]\right)},
    \end{aligned}
\end{equation}
For $k = 1, \ldots, m$ and for all $\varepsilon \in (0, 1/8)$, we have $B_k - A_k \geq 0$ and
\begin{align*}
    B_k - A_k = &~ t_{k+1} - t_k + \log\left(\dfrac{1 + 2\varepsilon}{1 - 2\varepsilon} \times \dfrac{p_0^*}{p_1^*}\right) - \log\left(\dfrac{p_0^*}{p_1^*}\right) \leq \dfrac{\overline{C} - \underline{C}}{m} + 8\varepsilon,
\end{align*}
and since $m \rightarrow \infty$, we obtain for all $m \geq \left\lceil \left(\overline{C} - \underline{C}\right)/\varepsilon \right\rceil$, $0 \leq B_k - A_k \leq 5\varepsilon, ~~ k = 1, \ldots, m$.
We finally, choosing $m$ such that $m \geq \lfloor 1/\varepsilon \rfloor$ obtain from Equation~\eqref{eq:T1-bound2} that 
\begin{equation}\label{eq:T1-f}
    \begin{aligned}
        T_{1, \bf f^*} = &~ \mathcal{O}(\varepsilon).
    \end{aligned}
\end{equation}

\subsection*{Upper-bound on $T_{2, \bf f^*}$}

Using a similar reasoning as in the previous case, for all $\varepsilon \in (0, 1/8)$, we obtain
\begin{align*}
     T_{2, \bf f^*} = \sum_{k=1}^{m}{\mathbb{P}_{X}\left(A_k^{\prime} \leq \int_{0}^{T}{\phi_{1, \bf b^*}(X_s)dW_s} \leq B_k^{\prime}\biggm\vert V_{\bf b^*}(X) \in [t_k, t_{k+1}]\right)\mathbb{P}_{X}\left(V_{\bf b^*}(X) \in [t_k, t_{k+1}]\right)},
\end{align*}
where for all $k \in \{1, \ldots, m\}, ~ A_k^{\prime} = \log\left(\frac{1-2\varepsilon}{1+2\varepsilon} \times \frac{p_0^*}{p_1^*}\right) - t_{k+1}, ~~ B_k^{\prime} = \log\left(\frac{p_0^*}{p_1^*}\right) - t_k.$
Since for all $k \in \{1, \ldots, m\}$ and for all $\varepsilon \in (0,1/8)$, we have $B_k^{\prime} - A_k^{\prime} \geq 0$ and $B_k^{\prime} - A_k^{\prime} \leq \frac{\overline{C} - \underline{C}}{m} + 8\varepsilon$.
Then for $m \geq \left\lceil \left(\overline{C} - \underline{C}\right)/\varepsilon \right\rceil$, we obtain:
\begin{equation}\label{eq:T2-f}
    \begin{aligned}
        T_{2, \bf f^*} = \sum_{k=1}^{m}{\left(\int_{A_k^{\prime}}^{B_k^{\prime}}{\Gamma(x)dx}\right)\mathbb{P}_{X}\left(V_{\bf b^*}(X) \in [t_k, t_{k+1}]\right)} \leq \left\|\Gamma\right\|_{\infty}\left(\dfrac{\overline{C} - \underline{C}}{m} + 8\varepsilon\right) = \mathcal{O}\left(\varepsilon\right)
    \end{aligned}
\end{equation}
The final result is deduced from Equations~\eqref{eq:T2-f}, \eqref{eq:T1-f} and \eqref{eq:margin0}.
\end{proof}

\subsection*{Proof of Lemma~\ref{lm:bias-1}}

\begin{proof}
Set $\pi_{i,k}(x) = \Gamma_{X|Y=i}(0,s_k^n,x_0,x), ~ \forall x \in \mathbb{R}$. For each $i \in \mathcal{Y}$, for all $j \in [\![1,N]\!]$ and $k \in [\![k_0,n-1]\!]$, 
    \begin{equation}\label{eq:bias-f-1}
        \mathbb{E}_{\mathbb{P}_i^{\otimes N}}\left[K_{h_{i,N}}(X_{s_k^n}^{ji} - x)\right] - \Gamma_{X|Y=i}(0,s_k^n,x_0,x) = \int_{-\infty}^{+\infty}K(v)\left[\pi_{i,k}(x + vh_{i,N}) - \pi_{i,k}(x)\right]dv.
    \end{equation}
    Since $\pi_{i,k} \in \mathcal{C}^{\infty}(\mathbb{R})$, from the Taylor-Lagrange formula up to order $\lfloor \beta \rfloor$, we obtain for all $x,v \in \mathbb{R}$,
    \begin{align*}
        \pi_{i,k}(x + vh_{i,N}) = \sum_{\ell = 0}^{\lfloor \beta \rfloor}\dfrac{(vh_{i,N})^{\ell}}{\ell!}\pi_{i,k}^{(\ell)}(x) + \dfrac{(vh_{i,N})^{\lfloor \beta \rfloor + 1}}{(\lfloor \beta \rfloor + 1)!}\pi_{i,k}^{(\lfloor \beta \rfloor + 1)}(x + vh_{i,N}\xi),
    \end{align*}
    where $\xi \in (0,1)$. Then, under Assumption~\ref{ass:prop-kernel} with $\gamma = \lfloor \beta \rfloor + 1$, we obtain
    \begin{equation*}
        \left|\int_{-\infty}^{+\infty}K(v)\left[\pi_{i,k}(x + vh_{i,N}) - \pi_{i,k}(x)\right]dv\right| = \left|\int_{-\infty}^{+\infty}K(v)\dfrac{(vh_{i,N})^{\lfloor \beta \rfloor + 1}}{(\lfloor \beta \rfloor + 1)!}\pi_{i,k}^{(\lfloor \beta \rfloor + 1)}(x + vh_{i,N}\xi)dv\right|.
    \end{equation*}
    From \cite{dacunha1986estimation}, \textit{Lemma 2}, for all $k \in [\![k_0,n-1]\!]$, we have
    \begin{equation*}
        \pi_{i,k}(x) = \Gamma_{X|Y=i}(0,s_k^n,x_0,x) = \dfrac{\Lambda(0,s_k^n,x_0,x)}{2\sqrt{2\pi t}}\exp\left(-\dfrac{(x - x_0)^2}{2t} + \int_{0}^{x}{b_i^*(u)du}\right),
    \end{equation*}
    where $\Lambda(0,s_k^n,x_0,x) = \widetilde{\mathbf{E}}\left[\exp\left(t_k\int_{0}^{T}{G((1-u)x + uy + \sqrt{s_k^n}B_u)du}\right)\right]$ and $G = -(b_i^{*2} + b_i^*)/2$. Since for each $t \in [t_0, T]$, the function $x \mapsto \Gamma_{X|Y=i}(0,t,x_0,x)$ is $\mathcal{C}^{\lfloor \beta \rfloor + 1}$, there exist constants $C_{\lfloor \beta \rfloor}, c_{\lfloor \beta \rfloor}, \alpha_{\lfloor \beta \rfloor} > 0$ depending on $\lfloor \beta \rfloor$ such that for all $k \in [\![k_0,n-1]\!]$,
    \begin{equation*}
        \forall ~ x \in \mathbb{R}, ~~ \left|\pi_{i,k}^{(\lfloor \beta\rfloor + 1)}(x)\right| = \left|\partial_{x}^{\lfloor \beta \rfloor + 1}\Gamma_{X|Y=i}(0,s_k^n,x_0,x)\right| \leq \dfrac{C_{\lfloor \beta \rfloor}}{(s_k^n)^{\alpha_{\lfloor \beta \rfloor}}}\exp\left(-c_{\lfloor \beta \rfloor}\dfrac{(x - x_0)^2}{s_k^n}\right) \leq \dfrac{C_{\lfloor \beta \rfloor}}{t_0^{\alpha_{\lfloor \beta \rfloor}}}.
    \end{equation*}
    We deduce that 
    \begin{equation}\label{eq:bias-f-2}
        \left|\int_{-\infty}^{+\infty}K(v)\left[\pi_{i,k}(x + vh_{i,N}) - \pi_{i,k}(x)\right]dv\right| \leq \dfrac{C_{\lfloor \beta \rfloor}}{t_0^{\alpha_{\lfloor \beta \rfloor}}}\dfrac{h_{i,N}^{\lfloor \beta \rfloor + 1}}{(\lfloor \beta \rfloor + 1)!}\int_{-\infty}^{+\infty}|v|^{\lfloor \beta \rfloor + 1}|K(v)|dv.
    \end{equation}
    The final result is deduced from Assumption~\ref{ass:prop-kernel} and Equations~\eqref{eq:bias-f-2} and \eqref{eq:bias-f-1}.
\end{proof}

\subsection*{Proof of Lemma~\ref{lm:bias-2}}

\begin{proof}
    For all $(i,j,k) \in \mathcal{Y} \times [\![1,N]\!] \times [\![k_0,n-1]\!]$, we have
    \begin{multline}\label{eq:bf-1}
        \left|\sum_{k=k_0}^{n-1}\mathbb{E}_{\mathbb{P}_i^{\otimes N}}\left[K_{h_{i,N}^{\prime}}(X_{s_k^n}^{ji} - x)\int_{s_k^n}^{s_{k+1}^n}b_i^*(X_{u}^{ji})du\right] - (T-t_0)(b\zeta)_{i,\Delta_n}^*(x)\right|\\
        \leq \left|\sum_{k=k_0}^{n-1}\left(s_{k+1}^n - s_k^n\right)\mathbb{E}_{\mathbb{P}_i^{\otimes N}}\left[K_{h_{i,N}^{\prime}}(X_{s_k^n}^{ji} - x)b_i^*(X_{s_k^n}^{ji})\right] - (T-t_0)(b\zeta)_{i,\Delta_n}^*(x)\right|\\
        + \sum_{k=k_0}^{n-1}\mathbb{E}_{\mathbb{P}_i^{\otimes N}}\left[\left|K_{h_{i,N}^{\prime}}(X_{s_k^n}^{ji} - x)\right|\int_{s_k^n}^{s_{k+1}^n}\left|b_i^*(X_{u}^{ji}) - b_i^*(X_{s_k^n}^{ji})\right|du\right].
    \end{multline}
    Using Cauchy-Schwarz's, from Assumptions~\ref{ass:Reg} and \ref{ass:prop-kernel} and \cite{denis2024nonparametric}, \textit{Lemma 2}, there exists a constant $C>0$ depending on $\|K\|$ such that
    \begin{multline}\label{eq:bf-2}
        \sum_{k=k_0}^{n-1}\mathbb{E}_{\mathbb{P}_i^{\otimes N}}\left[\left|K_{h_{i,N}^{\prime}}(X_{s_k^n}^{ji} - x)\right|\int_{s_k^n}^{s_{k+1}^n}\left|b_i^*(X_{u}^{ji}) - b_i^*(X_{s_k^n}^{ji})\right|ds\right]\\
        \leq \sum_{k=k_0}^{n-1}\left(\mathbb{E}_{\mathbb{P}_i^{\otimes N}}\left[\left|K_{h_{i,N}^{\prime}}(X_{s_k^n}^{ji} - x)\right|^2\right]\right)^{1/2}\left(\Delta_n\int_{s_k^n}^{s_{k+1}^n}\mathbb{E}\left[\left|b_i^*(X_{u}^{ji}) - b_i^*(X_{s_k^n}^{ji})\right|^2\right]du\right)^{1/2} \leq C\sqrt{\Delta_n}.
    \end{multline}
    On the other hand, for all $x \in \mathrm{Supp}(b_i^*)$, we have
    \begin{multline}\label{eq:bias-bf-1}
        \left|\sum_{k=k_0}^{n-1}\left(s_{k+1}^n - s_k^n\right)\mathbb{E}_{\mathbb{P}_i^{\otimes N}}\left[K_{h_{i,N}^{\prime}}(X_{s_k^n}^{ji} - x)b_i^*(X_{s_k^n}^{ji})\right] - (T-t_0)(b\zeta)_{i,\Delta_n}^*(x)\right|\\
        = (T-t_0)\int_{-\infty}^{+\infty}K(v)\left[(b^*f)_{i,\Delta_n}(x + vh_{i,N}^{\prime}) - (b\zeta)_{i,\Delta_n}^*(x)\right]dv.
    \end{multline}
    Since $b_i^* \in \Sigma(\beta, R)$ and $\zeta_i^* \in \mathcal{C}^{\infty}(\mathbb{R})$, the function $(b\zeta)_i^*$ is of class $\mathcal{C}^{\lfloor\beta\rfloor+1}$. Then, using the Taylor-Lagrange formula, we find that for all $x,v \in \mathbb{R}$, there exists $\xi \in (0,1)$ such that
    \begin{align*}
        (b\zeta)_{i,\Delta_n}^*(x + vh_{i,N}^{\prime}) = &~ \sum_{\ell = 0}^{\lfloor \beta \rfloor}\dfrac{(vh_{i,N}^{\prime})^{\ell}}{\ell!}(b\zeta)_{i,\Delta_n}^{* (\ell)}(x) + \dfrac{(vh_{i,N}^{\prime})^{\lfloor \beta \rfloor + 1}}{(\lfloor \beta \rfloor + 1)!}(b\zeta)_{i,\Delta_n}^{* (\lfloor \beta \rfloor + 1)}(x + vh_{i,N}^{\prime}\xi).
    \end{align*}
    Moreover, under Assumption~\ref{ass:Reg}, $(b\zeta)_{i,\Delta_n}^{* (\lfloor \beta \rfloor + 1)}$ is compactly supported, and $\left\|(b\zeta)_{i,\Delta_n}^{* (\lfloor \beta \rfloor + 1)}\right\|_{\infty} < \infty$.   Under Assumption~\ref{ass:prop-kernel} with $\gamma = \lfloor \beta \rfloor + 1$, we obtain for all $x \in \mathrm{Supp}(b_i^*)$,
    \begin{multline}\label{eq:bias-bf-3}
        \left|\int_{-\infty}^{+\infty}K(v)\left[(b\zeta)_i^*(x + vh_{i,N}^{\prime}) - (b\zeta)_i^*(x)\right]dv\right|\\
        = \int_{-\infty}^{+\infty}K(v)\dfrac{(vh_{i,N}^{\prime})^{\lfloor \beta \rfloor + 1}}{(\lfloor \beta \rfloor + 1)!}(b\zeta)_{i,\Delta_n}^{* (\lfloor \beta \rfloor + 1)}(x + vh_{i,N}^{\prime}\xi)dv = \mathcal{O}\left(h_{i,N}^{\prime\beta}\right).
    \end{multline}
    Equations~\eqref{eq:bias-bf-3}, \eqref{eq:bias-bf-1}, \eqref{eq:bf-2} and \eqref{eq:bf-1} lead to the expected result. 
\end{proof}

\subsection*{Proof of Lemma~\ref{lm:partition}}

\begin{proof}
We prove the result of Lemma~\ref{lm:partition} for any discrete-time version $X^{(n)} = (X_{t_k})_{k = 0 \leq k \leq n}$ of the diffusion process $X = (X_t)_{t \in [0,T]}$ from any subdivision $\{t_0 = 0, \ldots, t_n = T\}$ of the time interval $[0,T]$. The result is then extended to the continuous time process $X$ since for all $n \in \mathbb{N}^*$ and for any subdivision $\{t_0, \ldots, t_n\}$ of the time interval $[0,T]$, $\sigma\left(X_{t_k}, k \in [\![0,n]\!]\right) \subset \mathcal{F}_T = \sigma(X_t, t \in [0,T])$. Fix $n \in \mathbb{N}^*$ such that $n \rightarrow \infty$, and a subdivision $I_n = \{t_0^n, \ldots, t_n^n: 0=t_0^n<t_1^n<\ldots<t_n^n=T\}$ of the time interval $[0,T]$. We consider the discrete-time version $X^{(n)} = (X_{t_k^n})_{0\leq k \leq n}$ of the diffusion process $X$. In this context, the set $\mathcal{X}$ of diffusion paths becomes $\mathcal{X}^{(n)} := \mathbb{R}^n$. Since the diffusion process $X$ admits a transition density $(s,t,x,y) \mapsto \Gamma_X(s,t,x,y)$ given by Equation~\eqref{eq:transition-density-assouad}, the law of the random vector $X^{(n)} = (X_{t_k})_{0\leq k \leq n}$ is absolutely continuous with respect to the Lebesgue measure and its density function $\psi_{X^{(n)}}: x = (x_{1}, \ldots, x_{n}) \in \mathcal{X}^{(n)} \mapsto \psi_{X^{(n)}}(x)$ is given by
\begin{equation}\label{eq:Density-RandomVecttor}
    \psi_{X^{(n)}}(x) = \psi_{X^{(n)}}(x_{1}, \ldots, x_{n}) := \prod_{k=1}^{n}{\Gamma_X(t_{k-1}^n, t_k^n, x_{k-1}, x_{k})}.
\end{equation}
As we can see, the marginal distribution of $X^{(n)} = (X_{t_k^n})_{0\leq k \leq n}$ does not depend on $\vec{\sigma} \in \{-1, +1\}^m$ as the finite set $\Sigma^M$ is independent of $\vec{\sigma}$. Denote by $\widetilde{\xi}^{(n)}: \mathcal{X}^{(n)} \longrightarrow [-1,1]$, the discrete-time version of $\widetilde{\xi}$ (see Equation~\eqref{eq:xitilde}) given for all $x = (x_{t_1}, \ldots, x_{t_n}) \in \mathcal{X}^{(n)}$ by $\widetilde{\xi}^{(n)}(x) = \dfrac{\exp\left(F_f^{(n)}(x)\right) - 1}{\exp\left(F_f^{(n)}(x)\right) + 1}$, where $F_f^{(n)}(x) = \sum_{k=0}^{n-1}{f(x_{k})\left(x_{k+1} - x_{k}\right)} - \dfrac{1}{2}\sum_{k=0}^{n-1}{f^2(x_{k})\left(t_{k+1}^n - t_k^n\right)}$.
The function $\widetilde{\xi}^{(n)}$ is continuous on $\mathcal{X}^{(n)}$ and from Equations~\eqref{eq:xi-plus-less} and \eqref{eq:criterion-xi}, we define:
\begin{equation*}
   \mathcal{X}^{(n)+} = \left\{X \in \mathcal{X}^{(n)}: \widetilde{\xi}^{(n)}(X) > 0\right\} ~~ \mathrm{and} ~~ \mathcal{X}^{(n)-} = \left\{X \in \mathcal{X}^{(n)}: \widetilde{\xi}^{(n)}(X) \leq 0\right\},
\end{equation*}
and we have $\mathcal{X}^{(n)+} \cap \mathcal{X}^{(n)-} = \emptyset, ~~ \mathcal{X}^{(n)+} \cup \mathcal{X}^{(n)-} = \mathcal{X}^{(n)}, ~~ \mu^{(n)}(\mathcal{X}^{(n)+}) > 0 ~~ \mathrm{and} ~~ \mu^{(n)}(\mathcal{X}^{(n)-}) > 0$, where $\mu^{(n)}$ is the Lebesgue measure on $\mathcal{X}^{(n)} = \mathbb{R}^n$. Since $\mu^{(n)}(\mathcal{X}^{(n)+}) > 0$ and $\mu^{(n)}(\mathcal{X}^{(n)-}) > 0$, there exist sequences $(a_i)_{i \in [\![1,n]\!]}, (b_i)_{i \in [\![1,n]\!]}, (c_i)_{i \in [\![1,n]\!]}, (d_i)_{i \in [\![1,n]\!]}$ of values in $\bar{\mathbb{R}} = \mathbb{R} \cup \{-\infty, +\infty\}$ that satisfy the following conditions:
\begin{itemize}
    \item[(i)] 
    \begin{equation*}
        a_i < b_i ~~ \mathrm{and} ~~ c_i < d_i ~~ \forall ~ i \in [\![1,n]\!],
    \end{equation*}
    \item[(ii)] 
    \begin{equation*}
        \mathcal{I}^+ = \prod_{i = 1}^{n}(a_i, b_i) = (a_1, b_1) \times \ldots \times (a_n,b_n) \subset \mathcal{X}^{(n)+},
    \end{equation*}
    \item[(iii)]
    \begin{equation*}
        \mathcal{I}^- = \prod_{i = 1}^{n}(c_i, d_i) = (c_1, d_1) \times \ldots \times (c_n,d_n) \subset \mathcal{X}^{(n)-},
    \end{equation*}
    \item[(iv)]
    \begin{equation}\label{eq:Proba-I+I-}
        \mathbb{P}_X\left(X^{(n)} \in \mathcal{I}^+\right) = \mathbb{P}_X\left(X^{(n)} \in \mathcal{I}^-\right) > 0.
    \end{equation}
\end{itemize}
The fourth point comes from the simple fact that $\mathbb{P}_X\left(X^{(n)} \in \mathcal{X}^{(n)+}\right) > 0$, $\mathbb{P}_X\left(X^{(n)} \in \mathcal{X}^{(n)-}\right) > 0$ and the law of $X^{(n)}$ is atomless, being absolutely continuous with respect to the Lebesgue measure. Therefore, for any $\mathfrak{p} \in (0,1)$ such that $0 < \mathfrak{p} \leq \min\{\mathbb{P}_X(X^{(n)} \in \mathcal{X}^{(n)+}), \mathbb{P}_X(X^{(n)} \in \mathcal{X}^{(n)-})\} < 1$,
there exist $\mathcal{I}^+ \subset \mathcal{X}^{(n)+}$ and $\mathcal{I}^- \subset \mathcal{X}^{(n)-}$ respectively given by $(\mathrm{ii})$ and $(\mathrm{iii})$ such that
\begin{equation*}
    \mathbb{P}_X\left(X^{(n)} \in \mathcal{I}^+\right) = \mathbb{P}_X\left(X^{(n)} \in \mathcal{I}^-\right) = \mathfrak{p}.
\end{equation*}
Denote by $\psi_{X^{(n)}|\mathcal{I}^+}$ and $\psi_{X^{(n)}|\mathcal{I}^-}$ the conditional density functions respectively on events $\{X^{(n)} \in \mathcal{I}^+\}$ and $\{X^{(n)} \in \mathcal{I}^-\}$ and given for all $x = (x_{t_1}, \ldots, x_{t_n}) \in \mathcal{X}^{(n)}$ by
\begin{equation}\label{eq:conditional-densities}
    \psi_{X^{(n)}|\mathcal{I}^+}(x) = \dfrac{\psi_{X^{(n)}}(x)\mathds{1}_{x \in \mathcal{I}^+}}{\mathbb{P}_X\left(X^{(n)} \in \mathcal{I}^+\right)} ~~ \mathrm{and} ~~ \psi_{X^{(n)}|\mathcal{I}^-}(x) = \dfrac{\psi_{X^{(n)}}(x)\mathds{1}_{x \in \mathcal{I}^-}}{\mathbb{P}_X\left(X^{(n)} \in \mathcal{I}^-\right)}.
\end{equation}
Moreover, since $\mathcal{X}^{(n)+} \cap \mathcal{X}^{(n)-} = \emptyset$, there exists $i_0 \in [\![1, n]\!]$ such that $(a_{i_0}, b_{i_0}) \cap (c_{i_0}, d_{i_0}) = \emptyset$. Without loss of generality, we assume that $1 < i_0 < n$. Let $\psi_{X^{(n)}|\mathcal{I}^+}^{i_0}: (a_{i_0}, b_{i_0}) \rightarrow \mathbb{R}$ and $\psi_{X^{(n)}|\mathcal{I}^-}^{i_0}: (c_{i_0}, d_{i_0}) \rightarrow \mathbb{R}$ be the conditional density functions of the marginal distribution of the component $X_{t_{i_0}}$ of $X^{(n)}$ on events $\{X^{(n)} \in \mathcal{I}^+\}$ and $\{X^{(n)} \in \mathcal{I}^-\}$ respectively, given by:
\begin{align*}
    \psi_{X^{(n)}|\mathcal{I}^+}^{i_0}(x) = &~ \int_{a_1}^{b_1} \ldots \int_{a_{i_0-1}}^{b_{i_0-1}} \int_{a_{i_0+1}}^{b_{i_0+1}} \ldots \int_{a_n}^{b_n}\psi_{X^{(n)}|\mathcal{I}^+}^{i_0}(u_1, \ldots, u_{i_0-1}, x, u_{i_0+1}, \ldots, u_n)du^{(-i_0)},\\
    \psi_{X^{(n)}|\mathcal{I}^-}^{i_0}(x) = &~ \int_{c_1}^{d_1} \ldots \int_{c_{i_0-1}}^{d_{i_0-1}} \int_{c_{i_0+1}}^{d_{i_0+1}} \ldots \int_{c_n}^{d_n}\psi_{X^{(n)}|\mathcal{I}^+}^{i_0}(u_1, \ldots, u_{i_0-1}, x, u_{i_0+1}, \ldots, u_n)du^{(-i_0)},
\end{align*}
where $u^{(-i_0)} = \left(u_1, \ldots, u_{i_0-1}, u_{i_0+1}, \ldots, u_n\right) \in \mathbb{R}^{n-1}$ and $du^{(-i_0)} = du_n \ldots du_{i_0+1}du_{i_0-1} \ldots du_1$. The functions $x \mapsto \psi_{X^{(n)}|\mathcal{I}^+}^{i_0}(x)$ and $x \mapsto \psi_{X^{n}|\mathcal{I}^-}^{i_0}(x)$ are strictly positive on the intervals $(a_{i_0}, b_{i_0})$ and $(c_{i_0}, d_{i_0})$ respectively (see Equations~\eqref{eq:conditional-densities}, \eqref{eq:Density-RandomVecttor} and \eqref{eq:transition-density-assouad}). Then, the functions $\Psi_{X^{(n)}|\mathcal{I}^+}^{i_0}: x \in (a_{i_0}, b_{i_0}) \in  \mapsto \int_{a_{i_0}}^{x}\psi_{X^{(n)}|\mathcal{I}^+}^{i_0}(u)du \in [0,1]$
and $\Psi_{X^{(n)}|\mathcal{I}^-}^{i_0}: x \in (c_{i_0}, d_{i_0}) \in  \mapsto \int_{c_{i_0}}^{x}\psi_{X^{(n)}|\mathcal{I}^-}^{i_0}(u)du \in [0,1]$ are continuous and strictly increasing, that is, $\Psi_{X^{(n)}|\mathcal{I}^+}^{i_0}$ and $\Psi_{X^{(n)}|\mathcal{I}^-}^{i_0}$ are bijective functions. Thus, there exist $\lambda_1^{i_0}, \lambda_2^{i_0}, \gamma_1^{i_0}, \gamma_2^{i_0} \in [0,1]$ such that $\lambda_1^{i_0} < \lambda_2^{i_0}$, $\gamma_1^{i_0} < \gamma_2^{i_0}$, $(\lambda_1^{i_0}, \lambda_2^{i_0}) \cap (\gamma_1^{i_0}, \gamma_2^{i_0}) = \emptyset$, and
\begin{align*}
    \Psi_{X^{(n)}|\mathcal{I}^+}^{i_0}((a_{i_0}, b_{i_0})) = (\lambda_1^{i_0}, \lambda_2^{i_0}), ~~ \Psi_{X^{(n)}|\mathcal{I}^-}^{i_0}((c_{i_0}, d_{i_0})) = (\gamma_1^{i_0}, \gamma_2^{i_0}).
\end{align*}
Let $w_0 \in (0,1)$ close enough to $0$, and set $q_1^{i_0} = \lfloor \frac{\lambda_2^{i_0} - \lambda_1^{i_0}}{w_0} \rfloor$ and  $q_2^{i_0} = \lfloor \frac{\gamma_2^{i_0} - \gamma_1^{i_0}}{w_0} \rfloor$. Consider the respective discrete subsets $I_{i_0}^+ = \{\lambda_1^{i_0} + kw_0, ~~ k = 1, \ldots, q_1^{i_0}\}$ and $I_{i_0}^- = \{\gamma_1^{i_0} + jw_0, ~~ j = 1, \ldots, q_2^{i_0}\}$ of the intervals $(\lambda_1^{i_0}, \lambda_2^{i_0})$ and $(\gamma_1^{i_0}, \gamma_2^{i_0})$ respectively. Since the functions $\Psi_{X^{(n)}|\mathcal{I}^+}^{i_0}$ and $\Psi_{X^{(n)}|\mathcal{I}^-}^{i_0}$ are bijective, there exist unique $x_1^{i_0}, \ldots, x_{q_1^{i_0}}^{i_0} \in (a_{i_0}, b_{i_0})$ and unique $y_1^{i_0}, \ldots, y_{q_2^{i_0}}^{i_0} \in (c_{i_0}, d_{i_0})$ such that $x_1^{i_0} < \ldots < x_{q_1^{i_0}}^{i_0}$, $y_1^{i_0} < \ldots < y_{q_2^{i_0}}^{i_0}$ and
\begin{equation}\label{eq:partition}
    \forall ~ (k,j) \in [\![1, q_1^{i_0}]\!] \times [\![1, q_2^{i_0}]\!], ~~ \Psi_{X^{(n)}|\mathcal{I}^+}^{i_0}(x_k^{i_0}) = \lambda_1^{i_0} + kw_0 \in I_{i_0}^+ ~~ \mathrm{and} ~~ \Psi_{X^{(n)}|\mathcal{I}^-}^{i_0}(y_j^{i_0}) = \gamma_1^{i_0} + jw_0 \in I_{i_0}^-.
\end{equation}
Consider the subsets $\mathcal{X}_0^{\prime (n)}, \mathcal{X}_1^{\prime (n)}, \ldots, \mathcal{X}_{q_1^{i_0}}^{\prime (n)}, \mathcal{X}_{q_1^{i_0} + 1}^{\prime (n)}, \ldots, \mathcal{X}_{q_1^{i_0} + q_2^{i_0}}^{\prime (n)}$ of $\mathcal{X}^{(n)} = \mathbb{R}^{n}$ given for all $(k,j) \in [\![1, q_1^{i_0}]\!] \times [\![1, q_2^{i_0}]\!]$ by 
\begin{align*}
   &~ \mathcal{X}_k^{\prime (n)} := (a_1, b_1) \times \ldots \times (a_{i_0-1}, b_{i_0-1}) \times (x_{k-1}^{i_0}, x_k^{i_0}) \times (a_{i_0+1}, b_{i_0+1}) \times \ldots \times (a_{n}, b_{n}),\\\\
   &~ \mathcal{X}_{q_1^{i_0} + j}^{\prime (n)} := (c_1, d_1) \times \ldots \times (c_{i_0-1}, d_{i_0-1}) \times (y_{j-1}^{i_0}, y_j^{i_0}) \times (c_{i_0+1}, d_{i_0+1}) \times \ldots \times (c_{n}, d_{n}),
\end{align*}
and $\mathcal{X}_0^{\prime (n)} := \mathcal{X}^{(n)} \setminus \bigcup_{j=1}^{q_1^{i_0} + q_2^{i_0}}{\mathcal{X}_j^{\prime (n)}}$, where $x_0^{i_0} = a_{i_0}$ and $y_0^{i_0} = c_{i_0}$.\\
Then, the subsets $\mathcal{X}_0^{\prime (n)}, \ldots, \mathcal{X}_{q_1^{i_0} + q_2^{i_0}}^{\prime (n)}$ constitute a partition of $\mathcal{X}^{(n)}$ and, on the one hand, from Equation~\eqref{eq:partition} and for all $k \in [\![1, q_1^{i_0}]\!]$,
\begin{align*}
    \mathbb{P}_X\left(X^{(n)} \in \mathcal{X}_k^{\prime (n)}\right) = &~ \int_{a_1}^{b_1} \ldots \int_{a_{i_0-1}}^{b_{i_0-1}} \int_{x_{k-1}^{i_0}}^{x_k^{i_0}} \int_{a_{i_0+1}}^{b_{i_0+1}} \ldots \int_{a_n}^{b_n}\psi_{X^{(n)}}(u_1, \ldots, u_n)du_n \ldots du_1\\
    = &~ \mathbb{P}_X(X^{(n)} \in \mathcal{I}^+)\int_{x_{k-1}^{i_0}}^{x_k^{i_0}}\psi_{X^{(n)}|\mathcal{I}^+}^{i_0}(x)dx\\
    = &~ \mathbb{P}_X(X^{(n)} \in \mathcal{I}^+)\left[\Psi_{X^{(n)}|\mathcal{I}^+}^{i_0}(x_k^{i_0}) - \Psi_{X^{(n)}|\mathcal{I}^+}^{i_0}(x_{k-1}^{i_0})\right]\\
    = &~ w_0\mathbb{P}_X\left(X^{(n)} \in \mathcal{I}^+\right),
\end{align*}
and, on the other hand, from Equation~\eqref{eq:partition} and for all $j \in [\![1, q_2^{i_0}]\!]$,
\begin{align*}
    \mathbb{P}_X\left(X^{(n)} \in \mathcal{X}_{q_1^{i_0} + j}^{\prime (n)}\right) = &~ \int_{c_1}^{d_1} \ldots \int_{c_{i_0-1}}^{d_{i_0-1}} \int_{y_{j-1}^{i_0}}^{y_j^{i_0}} \int_{c_{i_0+1}}^{d_{i_0+1}} \ldots \int_{c_n}^{d_n}\psi_{X^{(n)}}(u_1, \ldots, u_n)du_n \ldots du_1\\
    = &~ \mathbb{P}_X(X^{(n)} \in \mathcal{I}^-)\int_{y_{j-1}^{i_0}}^{y_j^{i_0}}\psi_{X^{(n)}|\mathcal{I}^-}^{i_0}(x)dx\\
    = &~ \mathbb{P}_X(X^{(n)} \in \mathcal{I}^-)\left[\Psi_{X^{(n)}|\mathcal{I}^-}^{i_0}(y_j^{i_0}) - \Psi_{X^{(n)}|\mathcal{I}^-}^{i_0}(y_{j-1}^{i_0})\right]\\
    = &~ w_0\mathbb{P}_X\left(X^{(n)} \in \mathcal{I}^-\right).
\end{align*}
In addition, from Equation~\eqref{eq:Proba-I+I-}, set $w = w_0\mathbb{P}_X\left(X^{(n)} \in \mathcal{I}^+\right) = w_0\mathbb{P}_X\left(X^{(n)} \in \mathcal{I}^-\right) \in (0,1)$
and $q = \min\left\{w^{-1}, q_1^{i_0} + q_2^{i_0}\right\}$. 
We deduce that for all $(k,j) \in [\![1, q_1^{i_0}]\!] \times [\![1, q_2^{i_0}]\!]$,
\begin{equation*}
    \mathbb{P}_X\left(X^{(n)} \in \mathcal{X}_k^{\prime (n)}\right) = \mathbb{P}_X\left(X^{(n)} \in \mathcal{X}_{q_1^{i_0} + j}^{\prime (n)}\right) = w.
\end{equation*}
We assume that $w_0$ is close enough to $0$ so that $q > m$ is large enough with respect to $m$. Then, from partition $\mathcal{P}^{\prime (n)} = \left\{\mathcal{X}_0^{\prime (n)}, \ldots, \mathcal{X}_q^{\prime (n)}\right\}$, we deduce a new partition $\mathcal{P}^{(n)} = \left\{\mathcal{X}_0^{(n)}, \mathcal{X}_1^{(n)} \ldots, \mathcal{X}_m^{(n)}\right\}$ of $\mathcal{X}^{(n)}$ such that: 
\begin{itemize}
    \item for all $j \in \{1, \ldots, m\}$, $\mathcal{X}_j^{(n)} \in \mathcal{P}^{\prime (n)}$ and $\mathbb{P}_X\left(X^{(n)} \in \mathcal{X}_j^{(n)}\right) = w$,
    \item for all $j \in \{1, \ldots, m\}$, $\mathcal{X}_j^{(n)} \subset \mathcal{X}^{(n)-} ~~ \mathrm{or} ~~ \mathcal{X}_j^{(n)} \subset \mathcal{X}^{(n)+}$ and $\mathcal{X}_0^{(n)} = \mathcal{X}^{(n)} \setminus \bigcup_{j=1}^{m}{\mathcal{X}_j^{(n)}}$.
\end{itemize}
Finally, since for all $X^{(n)} \in \mathcal{X}^{(n)}$, $\Phi_f(X^{(n)}) = \mathbb{P}_{X,Y}\left(Y=1 | X^{(n)}\right) = \dfrac{1 + \widetilde{\xi}^{(n)}(X^{(n)})}{2}$, 
for any $j \in \{1, \ldots, m\}$, if $\mathcal{X}^{(n)}_j \subset \mathcal{X}^{(n)-}$, then for all $X^{(n)} \in \mathcal{X}_j^{(n)}$, $\widetilde{\xi}^{(n)}(X^{(n)}) \leq 0$, and for $\sigma_j = -1$, we obtain for all $X^{(n)} \in \mathcal{X}_j^{(n)}$, $\widetilde{\xi}^{(n)}(X^{(n)}) = \sigma_j\xi^{(n)}(X^{(n)})$,
where $\xi^{(n)} = \left|\widetilde{\xi}^{(n)}\right|: \mathcal{X}^{(n)} \longrightarrow [0,1]$, which leads to 
\begin{align*}
    \Phi_{\vec{\sigma}, f}(X^{(n)}) = \mathbb{P}_{X,Y}(Y=1 | X^{(n)}) = \dfrac{1 + \sigma_j\xi^{(n)}(X^{(n)})}{2} = 1 - \mathbb{P}_{X,Y}(Y=1 | X^{(n)}). 
\end{align*}
For the case $\mathcal{X}_j^{(n)} \subset \mathcal{X}^{(n)+}$, a similar reasoning is applied with $\sigma_j = +1$.
\end{proof}

\subsection*{Proof of Lemma~\ref{lm:Bounds-b-bprime}}

\begin{proof}
Recall that the diffusion process $X$, unique strong solution of Equation~\eqref{eq:model2}, admits a transition density that is strictly positive. Since $\mathbb{P}_X \circ X^{-1} \sim \mathcal{W}$, $\mathbb{P}_X(X \in \mathcal{X}_1) = w > 0$ implies that $\mathcal{W}(\mathcal{X}_1) > 0$. As a result, conditional on $\{X \in \mathcal{X}_1\}$, for all $t \in [0,T]$, we have $\mu(\left\{X_t(\omega), ~ \omega \in \Omega\right\}) > 0$, which means that for all $t \in [0,T]$, the random variable $X_t$ takes values in a continuous subset of $\mathbb{R}$. We have
\begin{align*}
    \mathfrak{b} = \sqrt{1 - \left(\mathbb{E}_X\left[\sqrt{1 - \xi^2(X)} \biggm\vert X \in \mathcal{X}_1\right]\right)^2}, ~~~ \mathfrak{b}^{\prime} = \mathbb{E}_X\left[\xi(X) | X \in \mathcal{X}_1\right],
\end{align*}
where $\xi(X) = |\widetilde{\xi}(X)|$ and $\widetilde{\xi}(X)$ given by Equation~\eqref{eq:xitilde}. For all $f = \kappa D^{-\beta} + \sum_{k=1}^{D}{\theta_k\phi_k} \in \Sigma^M$, $ F_f(X) = \int_{0}^{T}{f(X_s)dX_s} - \dfrac{1}{2}\int_{0}^{T}{f^2(X_s)ds} = D^{-\beta}Q_{\widetilde{f}}(X)$, where 
\begin{equation}\label{eq:Q-f}
    Q_{\widetilde{f}}(X) = \int_{0}^{T}{\widetilde{f}(X_s)dW_s} + \dfrac{D^{-\beta}}{2}\int_{0}^{T}{\widetilde{f}^2(X_s)ds},
\end{equation} 
and the function $\widetilde{f}$ is given for all $x \in [0, 1]$ by $\widetilde{f}(x) = \kappa + R\sum_{k=1}^{D}{\theta_kK\left(\frac{x-x_k}{D^{-1}}\right)}$, and satisfies the requirements:
\begin{equation}\label{eq:Bounds-f-tilde}
    x \in [0, 1], ~ \kappa \leq \widetilde{f}(x) \leq R\|K\|_{\infty}.
\end{equation}
Then, using the Taylor-Young expansion, when $D \rightarrow \infty$:
\begin{equation}\label{eq:Equiv-xi}
    \begin{aligned}
        \exp\left(F_f(X)\right) - 1 = &~ D^{-\beta}Q_{\widetilde{f}}(X) + \dfrac{D^{-2\beta}Q_{\widetilde{f}}^2(X)}{2} + {o}\left(D^{-2\beta}Q_{\widetilde{f}}^2(X)\right),\\
        \dfrac{1}{\exp\left(F_f(X)\right) + 1} = &~ \dfrac{1}{2} - \dfrac{D^{-\beta}Q_{\widetilde{f}}(X)}{4} +{o}\left(D^{-2\beta}Q_{\widetilde{f}}^2(X)\right).
    \end{aligned}
\end{equation}
We deduce that $\xi(X) \underset{D \rightarrow \infty}{\sim} 2D^{-\beta}\left|Q_{\widetilde{f}}(X)\right|$. Thus, there exist constants $c_{\xi}, C_{\xi} > 0$ such that 
\begin{equation}\label{eq:equivalence}
     \forall ~ X \in \mathcal{X}, ~ c_{\xi}D^{-\beta}\left|Q_{\widetilde{f}}(X)\right| \leq \xi(X) \leq C_{\xi}D^{-\beta}\left|Q_{\widetilde{f}}(X)\right| ~~ a.s.
\end{equation}

\subsection*{Lower bound of $\mathfrak{b}^{\prime}$}

By Equation~\eqref{eq:equivalence},
\begin{equation}\label{eq:LB-bprime}
    \mathfrak{b}^{\prime} = \mathbb{E}_X\left[\xi(X) | X \in \mathcal{X}_1\right] \geq c_{\xi}D^{-\beta}\mathbb{E}_X\left[\left|Q_{\widetilde{f}}(X)\right| \biggm\vert X \in \mathcal{X}_1\right].
\end{equation}
It remains to show that the quantity $\mathbb{E}_X\left[\left|Q_{\widetilde{f}}(X)\right| \biggm\vert X \in \mathcal{X}_1\right]$ is bounded from below by a strictly positive constant that does not depend on $N$. For this purpose, let $C_0 > 0, c_0 > 0$ be two numerical constants to be chosen later so that $C_0$ is large enough with respect to $c_0$, and set
\begin{equation*}
    \mathcal{B} = \left\{\underset{t \in [0,T]}{\sup}{\left|\int_{0}^{t}\widetilde{f}(X_s)dW_s\right|} \leq C_0\right\}.
\end{equation*}
From Equation~\eqref{eq:Q-f}, we have the following.
\begin{align*}
    &~ \mathbb{E}_X\left[\left|Q_{\widetilde{f}}(X)\right| \biggm\vert X \in \mathcal{X}_1\right]\\
    &~ \geq c_0\mathbb{P}_X\left(\left|\int_{0}^{T}\widetilde{f}(X_s)dW_s + \dfrac{D^{-\beta}}{2}\int_{0}^{T}\widetilde{f}^2(X_s)ds\right| \geq c_0 \biggm\vert \mathcal{B} \cap \{X \in \mathcal{X}_1\}\right)\mathbb{P}_X\left(\mathcal{B} \biggm\vert X \in \mathcal{X}_1\right)\\
    &~ \geq c_0\mathbb{P}_X\left(\left|\int_{0}^{T}\widetilde{f}(X_s)dW_s\right| \geq c_0 + \dfrac{1}{2}T\left\|\widetilde{f}\right\|_{\infty}^2D^{-\beta} \biggm\vert \mathcal{B} \cap \{X \in \mathcal{X}_1\}\right)\mathbb{P}_X\left(\mathcal{B} \biggm\vert X \in \mathcal{X}_1\right).
\end{align*}
Since $T\|\widetilde{f}\|_{\infty}^2D^{-\beta}/2 \rightarrow 0$ as $D \rightarrow \infty$, for $D$ large enough, $T\|\widetilde{f}\|_{\infty}^2D^{-\beta}/2 \leq c_0$. We deduce that
\begin{equation}\label{eq:LB-b}
    \mathbb{E}_X\left[\left|Q_{\widetilde{f}}(X)\right| \biggm\vert X \in \mathcal{X}_1\right]
    \geq c_0\mathbb{P}_X\left(\left|\int_{0}^{T}\widetilde{f}(X_s)dW_s\right| \geq 2c_0 \biggm\vert \mathcal{B} \cap \{X \in \mathcal{X}_1\} \right)\mathbb{P}_X\left(\mathcal{B} \biggm\vert X \in \mathcal{X}_1\right).
\end{equation}
Focusing on the second factor on the right-hand side of Equation~\eqref{eq:LB-b}, we obtain the following result.
\begin{align*}
    &~ \mathbb{P}_X\left(\left|\int_{0}^{T}\widetilde{f}(X_s)dW_s\right| > 2c_0 \biggm\vert \mathcal{B} \cap \{X \in \mathcal{X}_1\}\right)\\
    &~  = 1 - \mathbb{P}_X\left(-\left[\left(\int_{0}^{T}\widetilde{f}(X_s)dW_s\right)^2 - \int_{0}^{T}\widetilde{f}^2(X_s)ds\right] \geq \int_{0}^{T}\widetilde{f}^2(X_s)ds - 4c_0^2 \biggm\vert \mathcal{B} \cap \{X \in \mathcal{X}_1\}\right).
\end{align*}
We choose $c_0 = \kappa T^{1/2}/4$. Then, from Equation~\eqref{eq:Bounds-f-tilde}, we obtain
\begin{equation}\label{eq:LB-b-2}
    \mathbb{P}_X\left(\left|\int_{0}^{T}\widetilde{f}(X_s)dW_s\right| > c_0 \biggm\vert \mathcal{B} \cap \{X \in \mathcal{X}_1\}\right) \geq 1 - \mathbb{P}_X\left(-M_T > \dfrac{3T\kappa^2}{4} \biggm\vert \mathcal{B} \cap \{X \in \mathcal{X}_1\}\right),
\end{equation}
where $\mathcal{F}_t -$Martingale $M = (M_t)_{t \in [0,T]}$ is given by $M_t := \left(\int_{0}^{t}\widetilde{f}(X_s)dW_s\right)^2 - \int_{0}^{t}\widetilde{f}^2(X_s)ds, ~~ t \in [0,T]$. In event $\mathcal{B} \cap \{X \in \mathcal{X}_1\}$ and from Equation~\eqref{eq:Bounds-f-tilde}, the quadratic variation of $M$ satisfies the following:
\begin{align*}
    \forall ~ t \in [0,T], ~~ \left<M,M\right>_t = &~ 4\int_{0}^{t}\left(\int_{0}^{s}\widetilde{f}(X_u)dW_u\right)^2\widetilde{f}^2(X_s)ds \leq 4T\left\|\widetilde{f}\right\|_{\infty}^2C_0^2 \leq 4TR^2\|K\|_{\infty}^2C_0^2.
\end{align*}
Then, from \cite{van1995exponential}, \textit{Lemma 2.1}, we obtain from Equation~\eqref{eq:LB-b-2} that
\begin{equation}\label{eq:lbound}
    \mathbb{P}_X\left(\left|\int_{0}^{T}\widetilde{f}(X_s)dW_s\right| > c_0 \biggm\vert \mathcal{B} \cap \{X \in \mathcal{X}_1\}\right) \geq 1 - \exp\left(-\dfrac{9T\kappa^4}{128R^2\|K\|_{\infty}^2C_0^2}\right) > 0.
\end{equation}
Focusing on the third factor on the right-hand side of Equation~\eqref{eq:LB-b}, Doob's $L^2 -$inequality gives:
\begin{align*}
    \mathbb{P}_X\left(\mathcal{B} \biggm\vert X \in \mathcal{X}_1\right) = &~ 1 - \mathbb{P}_X\left(\underset{t \in [0,T]}{\sup}{\left|\int_{0}^{t}\widetilde{f}(X_s)dW_s\right|} \geq C_0 \biggm\vert X \in \mathcal{X}_1\right)\\
    \geq &~ 1 - \dfrac{1}{C_0^2}\underset{t \in [0,T]}{\sup}{\mathbb{E}_X\left[\left(\int_{0}^{t}\widetilde{f}(X_s)dW_s\right)^2 \biggm\vert X \in \mathcal{X}_1\right]} \geq 1 - \dfrac{TR^2\|K\|_{\infty}^2}{C_0^2}.
\end{align*}
The numerical constant $C_0 > 0$ is chosen so that $C_0 > \max\{5c_0, 2T^{1/2}R\|K\|_{\infty}\}$, which implies that
\begin{equation}\label{eq:LB-b-3}
    \mathbb{P}_X\left(\mathcal{B} \biggm\vert X \in \mathcal{X}_1\right) \geq 1 - \dfrac{TR^2\|K\|_{\infty}^2}{C_0^2} > 0.
\end{equation}
From Equations~\eqref{eq:LB-b-3}, \eqref{eq:lbound}, and \eqref{eq:LB-b}, we obtain the following.
\begin{equation}\label{eq:LB-b-4}
    \mathbb{E}_X\left[\left|Q_{\widetilde{f}}(X)\right| \biggm\vert X \in \mathcal{X}_1\right]
    \geq \left(1 - \exp\left(-\dfrac{9T\kappa^4}{128R^2\|K\|_{\infty}^2C_0^2}\right)\right)\left(1 - \dfrac{TR^2\|K\|_{\infty}^2}{C_0^2}\right) > 0.
\end{equation}
Finally, we deduce from Equations~\eqref{eq:LB-b-4} and \eqref{eq:LB-bprime} that there exists a constant $c^{\prime}>0$ such that
\begin{equation*}
    \mathfrak{b}^{\prime} = \mathbb{E}_X\left[\xi(X) | X \in \mathcal{X}_1\right] \geq c^{\prime}D^{-\beta}.
\end{equation*}

\subsection*{Upper bound of $\mathfrak{b}$}

We deduce from Equation~\eqref{eq:Equiv-xi} that $\sqrt{1-\xi^2(X)} =  1 - \dfrac{1}{8}D^{-2\beta}Q_{\widetilde{f}}^2(X) + o\left(D^{-2\beta}Q_{\widetilde{f}}^2(X)\right)$ which implies $\mathfrak{b} = \sqrt{1 - \left(\mathbb{E}_X\left[\sqrt{1 - \xi^2(X)} \vert X \in \mathcal{X}_1\right]\right)^2} = \mathcal{O}\left(D^{-\beta}\sqrt{\mathbb{E}_X[Q_{\widetilde{f}}^2(X)|X \in \mathcal{X}_1]}\right)$. To conclude the proof, we refer to Equation~\eqref{eq:Q-f} and remark that for $D \rightarrow \infty$, $\mathbb{E}_X[Q_{\widetilde{f}}^2(X)|X \in \mathcal{X}_1] \leq w^{-1}\mathbb{E}_X[Q_{\widetilde{f}}^2(X)] \leq 3T\|\widetilde{f}\|_{\infty}^2$.
\end{proof}

\end{document}